\documentclass[12pt]{amsart}
\usepackage{amsmath}	
\usepackage{amssymb}
\usepackage{caption}
\usepackage{tikz}
\usetikzlibrary{calc, decorations.markings}

\textwidth=6.5in \hoffset=-1in \textheight=9in
\voffset=-0.5in

\setlength{\parskip}{ 1.5ex plus 0.2ex minus 0.2ex}

\newtheorem{theorem}{Theorem}[section]
\newtheorem{lemma}[theorem]{Lemma}
\newtheorem{proposition}[theorem]{Proposition}
\newtheorem{cor}[theorem]{Corollary}

\theoremstyle{remark}

\theoremstyle{definition}

\numberwithin{equation}{section}

\newcommand{\bA}{\mathbb{A}}
\newcommand{\bC}{\mathbb{C}}
\newcommand{\bN}{\mathbb{N}}

\newcommand{\bQ}{\mathbb{Q}}
\newcommand{\bR}{\mathbb{R}}
\newcommand{\bZ}{\mathbb{Z}}

\newcommand{\cC}{{\mathcal{C}}}
\newcommand{\cD}{{\mathcal{D}}}
\newcommand{\cE}{{\mathcal{E}}}
\newcommand{\cG}{{\mathcal{G}}}
\newcommand{\cH}{{H}} 
\newcommand{\cK}{{\mathcal{K}}}

\newcommand{\cM}{{\mathcal{M}}}
\newcommand{\cO}{{\mathcal{O}}}

\newcommand{\cR}{{\mathcal{R}}}

\newcommand{\cU}{{\mathcal{U}}}

\newcommand{\card}{\mathrm{Card}}
\newcommand{\dt}{\/dt}
\newcommand{\dz}{\/dz}

\newcommand{\et}{\quad\text{and}\quad}

\newcommand{\Mat}{\mathrm{Mat}}

\newcommand{\Qbar}{\bar{\bQ}}
\newcommand{\ssi}{\quad\Longleftrightarrow\quad}

\newcommand{\tcC}{\tilde{\cC}}
\newcommand{\tgamma}{\tilde{\gamma}}
\newcommand{\tG}{\tilde{G}}

\newcommand{\ua}{\mathbf{a}}

\newcommand{\ue}{\mathbf{e}}

\newcommand{\un}{\mathbf{n}}

\newcommand{\uun}{\mathbf{1}}

\newcommand{\disp}{\displaystyle}

\renewcommand{\cG}{G}
\renewcommand\Re{\operatorname{Re}}
\renewcommand\Im{\operatorname{Im}}

\tikzset{->-/.style={decoration={
  markings,
  mark=at position #1 with {\arrow{>}}},postaction={decorate}},
  ->-/.default=0.5,
  }

\begin{document}

\baselineskip=16pt

\title[Approximation to values of the exponential function]
{Simultaneous approximation to values of the exponential function over the adeles}
\author{Damien Roy}

\subjclass[2010]{Primary 11J13; Secondary 11J61, 11J82, 11H06.}
\keywords{adeles, exponential function, geometry of numbers,
Hermite approximations, measures of approximation,
roots of polynomials, semi-resultant, steepest ascent, volumes.}
\thanks{Research partially supported by NSERC}

\begin{abstract}
We show that Hermite's approximations to values of the
exponential function at given algebraic numbers are
nearly optimal when considered from an adelic
perspective.  We achieve this by taking into account the ratio of
these values whenever they make sense in the various completions
(Archimedean or $p$-adic) of a number field containing these
algebraic numbers.
\end{abstract}

\maketitle

%
%

\section{Introduction}
\label{sec:intro}

We know by Euler that the number $e$ admits a continued fraction
expansion consisting of intertwined arithmetic progressions
\[
 e=[2,(1,2n,1)_{n=1}^\infty] = [2,1,2,1,1,4,1,1,6,1,\dots].
\]
Euler, Sundman and Hurwitz also obtained similar expansions
for the numbers $e^{2/m}$ where $m$ is a non-zero integer
\cite[\S\S31-32]{Pe1929}.  Consequently, one may derive very
good measures of rational approximations to these numbres (see for
example the fully explicit results of Bundschuch \cite[Satz 2]{Bu1971},
is the case where $m$ is even).  This is the aspect that interests us
here.  We propose the following heuristic explanation:
the ratios $2/m$ with $m\in\bZ\setminus\{0\}$ are the only
non-zero rational numbers $z$ for which the usual power series
\begin{equation}
 \label{intro:eq:e^z}
 e^z=\sum_{k=0}^\infty \frac{z^k}{k!}
\end{equation}
converges only as a real number.  Indeed, let $p$ be a prime number
and let $\bC_p$ denote the completion of the algebraic closure
$\Qbar$ of $\bQ$ for the $p$-adic absolute value of $\bQ$ extended
to $\Qbar$, with $|p|_p=p^{-1}$.  We know that, for $z\in\bC_p$,
the series \eqref{intro:eq:e^z} converges in $\bC_p$ if and only
if $|z|_p<p^{-1/(p-1)}$. In particular, for a rational number $z$,
viewed as an element of $\bC_p$, this series converges if and only
if the numerator of $z$ is divisible by $p$ when $p\neq 2$, and
by $4$ when $p=2$.

This phenomenon also extends to algebraic numbers. Indeed,
let $K$ be a number field, namely an algebraic extension of $\bQ$
of finite degree. Then any absolute value on $K$ induces the same
topology on $K$ as an absolute value coming from an embedding from
$K$ into $\bC$ or into $\bC_p$ for a prime number $p$.  We say
that such embeddings define the same place $v$ of $K$ if they
induce the same absolute value on $K$ denoted $|\ |_v$.  We then
denote by $K_v$ the completion of $K$ for this absolute value.
When the place $v$ comes from an embedding of $K$ into $\bC$,
the place $v$ is called Archimedean and we write $v\mid \infty$.
Otherwise it is called ultrametric, and we write $v\mid p$ if
it comes from an embedding of $K$ into $\bC_p$.  When $\alpha\in K$,
the series for $e^\alpha$ converges in each Archimedian completion
of $K$ but only in a finite number of ultrametric completions.
In particular, when $K$ admits a single Archimedean place, which
happens when $K=\bQ$ or when $K$ is quadratic imaginary, then
it may occur that $e^\alpha$ has a meaning only for this place.
Then, we obtain the following estimate where $\cO_K$ denotes
the ring of integers of $K$.

\begin{proposition}
\label{intro:prop:imaginaire}
Let $K\subset \bC$ be the field $\bQ$ or a quadratic imaginary
extension of $\bQ$, and let $\alpha$ be a non-zero element of
$K$ such that $|\alpha|_v\ge p^{-1/(p-1)}$ for each prime
number $p$ and each place $v$ of $K$ with $v\mid p$.  Then,
for any $x,y\in\cO_K$ with $x\neq 0$, we have
\[
 |x|\,|xe^\alpha-y| \ge c(\log |x|)^{-2g-1}
\]
where $g$ stands for the number of places $v$ of $K$ with
$v\mid \infty$ or $|\alpha|_v\neq 1$, and where $c>0$ is a
constant depending only on $\alpha$ and $K$.
\end{proposition}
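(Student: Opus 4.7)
The plan is to combine Hermite's classical Pad\'e approximations to $e^z$ with the product formula for $K$, exploiting the hypothesis that the series for $e^\alpha$ diverges at every ultrametric completion of $K$, so that $e^\alpha$ need only be controlled at the unique Archimedean place. For each integer $n\ge 1$, introduce Hermite's integer polynomials $A_n,B_n\in\bZ[z]$ of degree at most $n$ with
\[
R_n(z):=A_n(z)\,e^z-B_n(z)=O(z^{2n+1})\quad\text{near }z=0,
\]
and the standard estimates $|A_n(\alpha)|_\infty,|B_n(\alpha)|_\infty\le U_n$ with $U_n\asymp(2n)!$ and $|R_n(\alpha)|_\infty\le\varepsilon_n$ with $\varepsilon_n$ of order $1/(2n)!$, both up to geometric factors depending on $|\alpha|_\infty$. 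Since the coefficients of $A_n,B_n$ are integers, the ultrametric inequality gives $|A_n(\alpha)|_v,|B_n(\alpha)|_v\le\max(1,|\alpha|_v)^n$ at every finite place $v$ of $K$; this is equal to $1$ outside the finite set $S:=\{v\text{ finite}:|\alpha|_v\ne 1\}$ of cardinality $g-1$.

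Form the auxiliary quantity
\[
\Delta_n:=yA_n(\alpha)-xB_n(\alpha)=A_n(\alpha)(y-xe^\alpha)+xR_n(\alpha)\in K,
\]
and choose $n$ so that $\Delta_n\ne 0$, which is possible since consecutive Hermite convergents $B_n(\alpha)/A_n(\alpha)$ are pairwise distinct. The right-hand identity, valid at the Archimedean place where $e^\alpha$ is well-defined, yields $|\Delta_n|_\infty\le U_n\,|xe^\alpha-y|_\infty+|x|_\infty\varepsilon_n$; at each finite place, using $x,y\in\cO_K$, one has $|\Delta_n|_v\le\max(|x|_v,|y|_v)\,\max(1,|\alpha|_v)^n\le\max(1,|\alpha|_v)^n$. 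Setting $C_\alpha:=\prod_{v\in S}\max(1,|\alpha|_v)^{d_v}$, the product formula applied to $\Delta_n$ then forces $|\Delta_n|_\infty^{d_\infty}\ge C_\alpha^{-n}$, and combining with the Archimedean upper bound gives
\[
|xe^\alpha-y|_\infty\;\ge\;\frac{C_\alpha^{-n/d_\infty}-|x|_\infty\varepsilon_n}{U_n}.
\]
Choosing $n$ to be the largest integer with $|x|_\infty\varepsilon_n\le\tfrac12 C_\alpha^{-n/d_\infty}$, Stirling gives $n$ of order $\log|x|_\infty/\log\log|x|_\infty$, and multiplying the resulting inequality by $|x|_\infty$ yields a lower bound of polylogarithmic shape in $\log|x|_\infty$.

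The subtlest point, and the one I expect to be the main obstacle, is recovering the precise exponent $2g+1$: the argument as sketched, using a single $\Delta_n$, naturally produces only a sub-polynomial bound in $|x|_\infty$ rather than a polylogarithmic one. To sharpen it I would work with two consecutive approximants and the Cramer-type identity
\[
x\,\delta_n(\alpha)\;=\;A_{n+1}(\alpha)\Delta_n-A_n(\alpha)\Delta_{n+1},
\]
where $\delta_n(\alpha)=A_{n+1}(\alpha)B_n(\alpha)-A_n(\alpha)B_{n+1}(\alpha)=c_n\alpha^{2n+1}$ has a known size at the Archimedean place and controlled behavior at the places of $S$. Applying the product formula separately to $\Delta_n$ and $\Delta_{n+1}$ and combining via this identity, together with refined estimates on the $v$-adic valuations of the Hermite coefficients at the primes of $S$, should give the sharper exponent: each of the $g$ distinguished places contributes a factor of $2$ through its local degree $d_v$, and the remaining $+1$ arises from the balancing of the factorial growth of $U_n\varepsilon_n$ against $|x|_\infty$.
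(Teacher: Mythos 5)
Your route is genuinely different from the paper's: you work with a single linear form $\Delta_n=yA_n(\alpha)-xB_n(\alpha)$ and the product formula, whereas the paper packages the two Hermite approximants $\ua_{\un-\ue_1},\ua_{\un-\ue_2}$ into an adelic convex body, computes its volume at every place against Mahler's determinant $\Delta_{(n,n)}=\pm((n-1)!)^2\alpha^{2n}$, and invokes the adelic Minkowski theorem to get $\lambda_1(\tcC_n)\ge c\,n^{-2g+1}$, from which the stated inequality follows by choosing $n$ with $h(n-1)\le|x|<h(n)$. However, as you yourself concede, your argument as written does not prove the statement. The quantitative gap is real and is not where you locate it. First, your ultrametric bound $|\Delta_n|_v\le\max(1,|\alpha|_v)^n$ throws away the essential fact that Hermite's approximants are divisible by (essentially) $(n-1)!\,\alpha^n$ at \emph{every} finite place — this is the content of the paper's Lemma \ref{estultra:lemme1}, proved via $p$-adic Cauchy inequalities, and it is what makes $\prod_{v\nmid\infty}|\Delta_n|_v^{d_v}$ as small as $|((n-1)!)^2\alpha^{2n}|_\infty^{-d_\infty}$ up to a factor polynomial in $n$, rather than merely $C_\alpha^{-n}$. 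Second, at the Archimedean place you only claim $U_n\varepsilon_n$ is controlled ``up to geometric factors''; to reach a polylogarithmic bound one needs $U_n\varepsilon_n$ to exceed $|((n-1)!)^2\alpha^{2n}|_\infty$ by at most a \emph{polynomial} factor in $n$, which requires the sharp saving $\max_{t\in[0,1]}t^{n-1}(1-t)^{n-1}=4^{-(n-1)}$ in the remainder integral (compare the computation opening the proof of Proposition \ref{dexp:prop}). With only the bounds you state, the method yields $|x|\,|xe^\alpha-y|\ge\exp(-c\log|x|/\log\log|x|)$, which is weaker than $(\log|x|)^{-2g-1}$.

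Your proposed repair does not close this gap. The Cramer-type identity $x\,\delta_n(\alpha)=A_{n+1}(\alpha)\Delta_n-A_n(\alpha)\Delta_{n+1}$ is the right tool for the nonvanishing of at least one of $\Delta_n,\Delta_{n+1}$, but it does nothing to remove the geometric losses above, which come from imprecise local estimates rather than from using a single index $n$. Likewise, the heuristic that ``each of the $g$ distinguished places contributes a factor of $2$ through its local degree'' is not how the exponent arises: in the paper, $2g-1$ comes from the first minimum of the adelic body (one factor $n^{g-1}$ and one factor $n^{g}$ in the two Archimedean inequalities, plus one factor $n^{-d_v/d}$ per finite place of $E$ absorbed into the idele rescaling), and the remaining $+2$ comes from the final balancing $\binom{2n-2}{n-1}4^{-n}\gg n^{-3/2}$ when $|x|\asymp h(n)$. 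To complete your approach you would need, at minimum: (i) the exact evaluation of the semi-resultant/Mahler determinant so that the product formula is applied to $((n-1)!)^2\alpha^{2n}$ and not to a cruder majorant; (ii) the $p$-adic Cauchy estimates giving $|P_{\un-\ue_\ell}(\alpha_i)|_v\le|(n-1)!|_v$ at places with $|\alpha|_v=1$ and the corresponding refined bounds at places of $E$; and (iii) the explicit $4^{-n}$ in the Archimedean remainder. These are precisely the ingredients of the paper's Sections \ref{sec:estultra} and \ref{sec:dexp}.
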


For example if $K=\bQ(\sqrt{-2})$, we may take
$\alpha=2(1\pm\sqrt{-2})/m$ where $m\in\cO_K\setminus\{0\}$.
If $K=\bQ(\sqrt{-23})$, we may take
$\alpha=(1\pm\sqrt{-23})/(2m)$ where $m\in\cO_K\setminus\{0\}$.
In some cases, $e^\alpha$ admits a generalized continued
fraction expansion similar to the one of $e$ (with partial
quotients in $\cO_K$) but we do not consider this question here.

More generally, let $\alpha_1,\dots,\alpha_s$ be distinct
elements of a number field $K\subset \bC$. Lindemann-Weierstrass
theorem \cite{We1885} tells us that their exponentials
$e^{\alpha_1},\dots,e^{\alpha_s}\in\bC$ are linearly
independent over $K$ and the classical proof,
in all variants (see \cite[Appendix]{Ma1976}),
is based on Hermite's approximations which we recall in the
next section.  Our goal is to show that these approximations
are nearly optimal in the context of geometry of numbers in
the adeles of $K$, when taking into account all places
$v$ of $K$ and all pairs of indices $i,j$ with $1\le i<j\le s$
for which the series for $e^{\alpha_i-\alpha_j}$
converges in $K_v$.  It is possible that this observation
reflects a much wider property of the values of the
exponential function.

For example the series for $e^3$ converges in $\bR$ and
in $\bQ_3$ but not in any $\bQ_p$ for a prime number $p\neq 3$.
Then our approach leads to the following result.

\begin{proposition}
\label{intro:prop:e3}
For any integer $n\ge 1$, we define a convex body $\cC_n$
of\/ $\bR^2$ and a lattice $\Lambda_n$ of\/ $\bR^2$ by
\begin{align*}
 \cC_{n}
  &=
 \left\{ (x,y)\in\bR^2 \,;
   \quad
   |x|\le \frac{(2n)!}{n!3^{n/2}}
   \,,\quad
   |xe^3-y|\le \Big(\frac{3}{2}\Big)^{2n}\frac{1}{n!3^{n/2}}
 \right\},\\
 \Lambda_{n}
  &=
 \left\{ (x,y)\in\bZ^2 \,;
   \quad
   |xe^3-y|_3\le 3^{-n}
 \right\}.
\end{align*}
For $i=1,2$, let $\lambda_i(\cC_n,\Lambda_n)$ denote
the $i$-th minimum of $\cC_n$
with respect to $\Lambda_n$, that is the smallest
$\lambda>0$ such that $\lambda\cC_n$ contains at least $i$
$\bQ$-linearly independent elements of $\Lambda_n$.
Then we have
\[
 (cn^2)^{-1}
  \le \lambda_1(\cC_n,\Lambda_n)
  \le \lambda_2(\cC_n,\Lambda_n)
  \le cn^2,
\]
for a constant $c>1$ that does not depend on $n$.
\end{proposition}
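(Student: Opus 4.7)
The plan is to construct, using Hermite's Pad\'e approximants to $e^z$, two $\bQ$-linearly independent vectors of $\Lambda_n$ lying in $cn^2 \cC_n$, which gives $\lambda_2 \le cn^2$. The lower bound $\lambda_1 \ge 1/(cn^2)$ will then follow from Minkowski's second convex body theorem: a direct computation gives $\vol(\cC_n) = 4\binom{2n}{n}(3/4)^n \asymp 3^n/\sqrt{n}$ (the body is a parallelogram in the coordinates $(x,\,xe^3-y)$) and $\covol(\Lambda_n) = 3^n$ (since $[\bZ^2:\Lambda_n]=3^n$); hence $\lambda_1 \lambda_2 \asymp \sqrt{n}$, and combined with $\lambda_2 \le cn^2$ this forces $\lambda_1 \ge c'\sqrt{n}/n^2 \ge 1/(c''n^2)$.

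For the construction I would use, for each $m\ge 0$, the classical integer polynomials of degree $m$
\[
 Q_m(z) = \sum_{k=0}^{m} \frac{(2m-k)!}{k!(m-k)!}\,(-z)^k, \qquad P_m(z) = Q_m(-z),
\]
so that $E_m(z) := Q_m(z)\,e^z - P_m(z)$ admits Hermite's integral representation $E_m(z) = (-z)^{2m+1}/m!\cdot \int_0^1 e^{tz}(t(1-t))^m\,dt$ and vanishes to order $2m+1$ at $0$. This gives the archimedean bounds $|Q_m(3)| \le Cm\cdot(2m)!/m!$ and $|E_m(3)| \le C(3/2)^{2m}/m!$ for an absolute constant $C$. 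For the $3$-adic side, expanding $E_m(3)$ as a convergent series in powers of $3$ and applying Legendre's formula $\ord_3(\ell!) = (\ell - s_3(\ell))/2$ yields $\ord_3(E_m(3)) \ge (3m+1)/2 - \log_3 m - O(1)$ and, term by term in the sum defining $Q_m(3)$, $\ord_3(Q_m(3)) \ge m/2 - \log_3 m - O(1)$.

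Setting $u_m := \lfloor m/2 - \log_3 m - 1\rfloor$ and $(x_m,y_m) := (Q_m(3)/3^{u_m},\,P_m(3)/3^{u_m})$, the $3$-adic estimates guarantee that $(x_m, y_m) \in \bZ^2$ and that $\ord_3(x_m e^3 - y_m) = \ord_3(E_m(3)) - u_m \ge m + O(1)$, so $(x_n, y_n) \in \Lambda_n$ (for $n$ larger than an explicit threshold; the finitely many small $n$ are handled by enlarging $c$). The archimedean bounds combine with $3^{m/2 - u_m} \le 3^{\log_3 m + 1} = 3m$ to give $|x_n|/M_1 \le 3Cn^2$ and $|x_n e^3 - y_n|/M_2 \le 3Cn$, so $(x_n, y_n) \in cn^2 \cC_n$. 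For the second vector I would take $(3x_{n-1},\,3y_{n-1})$: multiplication by $3$ pushes it into $\Lambda_n$, and the ratio estimates $M_1^{(n-1)}/M_1^{(n)} \asymp 1/n$ and $M_2^{(n-1)}/M_2^{(n)} \asymp n$ between the bodies $\cC_{n-1}$ and $\cC_n$ show that $(3x_{n-1}, 3y_{n-1}) \in c'n^2 \cC_n$ as well. The two vectors are $\bQ$-linearly independent by the classical Pad\'e cross-Wronskian identity $Q_n(z)P_{n-1}(z) - P_n(z)Q_{n-1}(z) = c_n\,z^{2n-1}$ for a nonzero constant $c_n$, which is nonzero at $z=3$ and whose $3$-adic valuation $2n-1$ exceeds $u_n + u_{n-1}$.

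The main obstacle I expect is the coordinated $3$-adic analysis. One must control simultaneously $\ord_3(E_m(3))$ from below and the exponent $u_m \le \ord_3(Q_m(3))$ with enough precision that $u_m = m/2 + O(\log m)$ both keeps $(x_n,y_n)$ and $(3x_{n-1}, 3y_{n-1})$ in $\Lambda_n$ and confines them to within a polynomial factor $n^2$ of $\cC_n$. Both estimates follow from a careful term-by-term analysis via Legendre's formula, tracking the digit-sum function $s_3$ at several factorial arguments at once; the bookkeeping of cancellations and corrections is where essentially all the work lies, and an overly weak constant in the $\log_3$ correction would degrade the final exponent $n^2$.
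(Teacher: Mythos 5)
Your proposal is correct in substance, but it reaches the result by a classical route where the paper goes through its adelic machinery, so a comparison is in order. The paper deduces this proposition from Proposition \ref{dexp:prop} (the $s=2$, $\un=(n,n)$ case of the main theorem with $K=\bQ$, $\alpha=3$): the lattice points are the Hermite approximations $a_{\un-\ue_1},a_{\un-\ue_2}$ (which, by Corollary \ref{rel:cor2}, are exactly your $(P_m(3),Q_m(3))$-type vectors multiplied by $(n-1)!$), the $3$-adic divisibility that you extract by hand from Legendre's formula is obtained there from the $p$-adic Cauchy inequalities of Lemma \ref{estultra:lemme1} together with the scaling by $\alpha^n(n-1)!$ and the product formula, the linear independence comes from Mahler's determinant (Theorem \ref{res:thm:Mahler}), of which your Pad\'e cross-Wronskian is the $s=2$ instance, and the lower bound on $\lambda_1$ comes from the adelic Minkowski theorem rather than the classical second theorem applied to $\vol(\cC_n)\asymp 3^n/\sqrt{n}$ and $[\bZ^2:\Lambda_n]=3^n$ (both of which you compute correctly). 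What your approach buys is self-containedness for this special case --- no adeles, no $p$-adic function theory --- at the price of redoing, digit sum by digit sum, the bookkeeping that Lemma \ref{estultra:lemme1} delivers analytically; what the paper's approach buys is that the same argument covers all $s$ and all number fields at once.

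One step needs repair, though it is not a structural gap: a bound of the form $\ord_3(x_me^3-y_m)\ge m+O(1)$ with an $O(1)$ of unknown sign does \emph{not} yield $(x_n,y_n)\in\Lambda_n$ ``for $n$ large'', since an additive constant deficit does not vanish as $n\to\infty$; you must either pin down the constants in the two Legendre-formula estimates or simply decrease $u_m$ by a fixed absolute integer (equivalently, multiply your vectors by a bounded power of $3$), which costs only an absolute constant in $c$. The same remark applies to the second vector $(3x_{n-1},3y_{n-1})$, where a single factor of $3$ may not suffice. With that adjustment, your exponent accounting ($3^{m/2-u_m}=O(m)$, hence membership in $O(n)\cC_n\subseteq cn^2\cC_n$, and $\lambda_1\ge c'\sqrt{n}/\lambda_2$ from the easy inequality of Minkowski's second theorem) goes through as you describe.
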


Using the fact that $3^n\bZ^2\subset\Lambda_n$, one deduces
that $\lambda_1(\cC_n,\bZ^2)\ge (c n^2 3^n)^{-1}$ for any
integer $n\ge 1$.  Consequently, for each $\epsilon>0$, there
exists a constant $c_\epsilon>0$ such that
\[
 |x|\,|xe^{3}-y|
  \ge c_\epsilon |x|^{-\epsilon}
\]
for all $(x,y)\in\bZ^2$ with $x\neq0$.  One may even derive
slightly sharper estimates (see \cite[Satz 1]{Bu1971}).  However,
numerical computations described in section \ref{sec:num} yield
\begin{equation}
 \label{intro:eq:loglog}
 |x|\,|xe^3-y|\ge (3\,\log|x|\,\log\log |x|)^{-1}
 \quad\text{if}\quad 4\le |x|\le 10^{500\,000}.
\end{equation}
More involved computations which we do not describe here even suggest
the existence of a real number $g>0$ such that
\[
 |x_1|\,|x_1e^3-x_2|\,|x_1e^3-x_2|_3 \ge (\log |x_1|)^{-g}
\]
for any $(x_1,x_2,x_3)\in\bZ^3$ with $|x_1|$ large enough.
Finally, an important result of Baker \cite{Ba1965} shows that
if $\alpha_2,\dots,\alpha_s\in\bQ$ are distinct non-zero
rational numbers then, for each $\epsilon>0$, there also
exists a constant $c_\epsilon>0$ such that
\[
 |x_1|\,|x_1e^{\alpha_2}-x_2|\cdots|x_1e^{\alpha_s}-x_s|
  \ge c_\epsilon |x_1|^{-\epsilon}
\]
for each $(x_1,\dots,x_s)\in\bZ^s$ with $|x_1|\neq 0$.  The
properties of Hermite's approximations suggest that the
right hand side $c_\epsilon |x_1|^{-\epsilon}$ in this inequality
could be remplaced by $(\log |x_1|)^{-g}$ for a constant $g>0$
depending only on $(\alpha_2,\dots,\alpha_s)$, when $|x_1|$ is
large enough.

In this paper, $\bN$ stands for the set of non-negative integers
and $\bN_+=\bN\setminus\{0\}$ for the set of positive integers.

\medskip
\noindent
\textbf{Acknowledgments:} I warmly thank
Michel Waldschmidt for numerous exchanges on these
questions. In particular, his course notes
\cite{Wa2008} were a source of inspiration.

%
%

\section{Statement of the main result}
\label{sec:resultat}

Let $K$ be a number field, let $\cO_K$ be its ring of
integers, let $d=[K:\bQ]$ be its degree
over $\bQ$, and let $s\in\bN_+$.
For any ultrametric place $v$ of $K$, we denote by
$\cO_v=\{x\in K_v\,;\,|x|_v\le 1\}$ the ring of integers of $K_v$
and by $d_v=[K_v:\bQ_p]$ the local degree of $K_v$, where $p$
stands for the prime number below $v$ (notation $v\mid p$),
namely the prime number $p$ for which $|\ |_v$ extends
the $p$-adic absolute value on $\bQ$.
Following McFeat \cite[\S2.2]{Mc1971}, we denote by
$\mu_v$ the Haar measure on $K_v$ normalized so that
$\mu_v(\cO_v)=1$.  For an Archimedean place (notation
$v\mid \infty$), we again denote by $d_v=[K_v:\bR]$ the
local degree of $K_v$, and define $\mu_v$ as the
Lebesgue measure on $K_v$ (this field is $\bR$ or $\bC$).
We denote by $r_1$ (resp.\ $r_2$) the number of
places $v\mid \infty$ with $d_v=1$ (resp.\ $d_v=2$),
so that $d=r_1+2r_2$.

The ring of adeles of $K$ is the product $K_\bA=\prod_v K_v$
running over all places $v$ of $K$, with the restricted
topology.  This is a locally compact ring that we equip with
the Haar measure $\mu$, product of the $\mu_v$.
We identify $K$ as a subfield of $K_\bA$ via the diagonal
embedding.  Then $K$ becomes a discrete subgroup of $K_\bA$
and, with the above normalization, we have
\[
 \mu(K_\bA/K)=2^{-r_2}|D(K)|^{1/2},
\]
where $D(K)$ stands for the discriminant of $K$.
By abuse of notation, we also write $\mu$ for the product
measure of $s$ copies of $\mu$ on $K_\bA^s$. Similarly,
for each place $v$ of $K$, we also write $\mu_v$ for the
product measure of $s$ copies of $\mu_v$ on $K_v^s$.
With our normalization of the absolute value on $K_v$,
if $T\colon K_v^s\to K_v^s$ is a $K_v$-linear map
and if $E$ is a measurable subset of $K_v^s$, the set
$T(E)$ is measurable with measure
$\mu_v(T(E))=|\det T|_v^{d_v}\mu_v(E)$.

%
%

\subsection{Minima of adelic convex bodies}
\label{subsec:minima}
An \emph{adelic convex body} of $K^s$ is a product
\[
  \cC=\prod_v \cC_v \subset K_\bA^s,
\]
indexed by all places $v$ of $K$, which satisfies
the following properties:
\begin{itemize}
\item[(i)] if $v\mid \infty$, then $\cC_v$ is a
  \emph{convex body} of $K_v^s$, namely a compact connected
  neighborhood of $0$ in $K_v^s$ such that $\alpha\,\cC_v=\cC_v$
  for any $\alpha\in K_v$ with $|\alpha|_v=1$;
\item[(ii)] if $v\nmid\infty$, then $\cC_v$ is a
  finite type (thus free) sub-$\cO_v$-module of $K_v^s$ of
  rank $s$;
\item[(iii)] $\cC_v=\cO_v^s$ for all but finitely many
  places $v$ of $K$ with $v\nmid \infty$.
\end{itemize}
Suppose that $\cC$ is such a product.  For each $i=1,\dots,s$,
we define its $i$-th minimum $\lambda_i(\cC)$ as the
smallest $\lambda>0$ for which the adelic convex body
\[
 \lambda\cC=\prod_{v\mid \infty}\lambda\cC_v \prod_{v\nmid\infty}\cC_v
\]
contains at least $i$ linearly independent elements of $K^s$
over $K$.  With this notation and our normalization of
measures, the adelic version of Minkowski's theorem
reads as follows.

\begin{theorem}[McFeat, Bombieri et Vaaler]
 \label{res:thm:MBV}
For any adelic convex body $\cC$ of $K^s$, we have
\[
  2^{sr_1}(s!)^{-d}
  \le
  \left(\lambda_1(\cC)\cdots\lambda_s(\cC)\right)^d\mu(\cC)
  \le
  2^{s(r_1+r_2)}|D(K)|^{s/2}.
\]
\end{theorem}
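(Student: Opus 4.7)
The plan is to prove the two inequalities separately, following the adelic adaptation of Minkowski's second theorem on successive minima due to McFeat and to Bombieri--Vaaler. Set $\lambda_i=\lambda_i(\cC)$ and choose $K$-linearly independent elements $\ux_1,\dots,\ux_s\in K^s$ with $\ux_i\in\lambda_i\cC$. The matrix $M=[\ux_1\,|\,\cdots\,|\,\ux_s]$ lies in $\GL_s(K)$ and satisfies the product formula $\prod_v|\det M|_v^{d_v}=1$. The reference covolume is $\mu(K_\bA^s/K^s)=2^{-sr_2}|D(K)|^{s/2}$, and the key measure-theoretic fact is $\mu_v(T(E))=|\det T|_v^{d_v}\mu_v(E)$ for $K_v$-linear maps $T$.

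For the lower bound, at each Archimedean place $v$ I would consider the cross-polytope $P_v=\{\sum_i t_i\ux_i:t_i\in K_v,\ \sum_i|t_i|_v\lambda_i\le\lambda_s\}$. By convexity of $\cC_v$ and its invariance under multiplication by elements of $K_v$ of absolute value $1$, one has $P_v\subset\lambda_s\cC_v$; a direct integration gives $\mu_v(P_v)$ proportional to $|\det M|_v^{d_v}\lambda_s^{d_vs}/(\lambda_1\cdots\lambda_s)^{d_v}$, with constant $2^{d_vs}/(s!)^{d_v}$ at real places and an analogous value at complex places (the collected complex-place factor exactly cancels the $2^{-sr_2}$ in the covolume). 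At a non-Archimedean $v$, the sub-$\cO_v$-module generated by $\ux_1,\dots,\ux_s$ is contained in $\cC_v$ (up to a harmless scalar) and has measure $|\det M|_v^{d_v}\mu_v(\cC_v)$. Taking the product over all $v$ and using the product formula to eliminate $\det M$ yields the bound $(\lambda_1\cdots\lambda_s)^d\mu(\cC)\ge 2^{sr_1}(s!)^{-d}$.

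For the upper bound, I would use an adelic Blichfeldt--Minkowski lemma: any symmetric adelic convex body $\cB\subset K_\bA^s$ with $\mu(\cB)>2^{s(r_1+r_2)}|D(K)|^{s/2}$ contains a nonzero element of $K^s$. Applied directly to $\lambda_1\cC$ this already gives the first-minimum bound; to chain through to the full product one argues inductively on $i$, at each step replacing $K_\bA^s$ by its quotient by the $K$-span of $\ux_1,\dots,\ux_{i-1}$ (endowed with the induced adelic structure) and applying the Blichfeldt lemma in the quotient.

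The main obstacle is the inductive step: one must show that the successive minima in the quotient identify correctly with $\lambda_i,\dots,\lambda_s$ and that the discriminant factor $|D(K)|^{s/2}$ emerges cleanly from the combined contributions of the $\cO_v$-module structures at finite places and the product formula applied to $\det M$. The distinction between real ($d_v=1$) and complex ($d_v=2$) Archimedean places, manifested in the sharp constants $2^{sr_1}$ and $2^{s(r_1+r_2)}$, also requires care and is what distinguishes this statement from a purely formal consequence of the classical Minkowski theorem on $\bR^{ds}$.
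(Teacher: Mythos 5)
First, a point of calibration: the paper does not prove this theorem at all. It is quoted from the literature, with the upper bound attributed to McFeat \cite{Mc1971} and Bombieri--Vaaler \cite{BV1983} and the lower bound to McFeat, so there is no in-paper proof to compare yours against; you are reconstructing the cited sources.

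Your lower-bound argument is essentially McFeat's and is sound. The cross-polytope $P_v$ does sit inside $\lambda_s\cC_v$ by convexity and circle-invariance, the finite places contribute $\mu_v(\cC_v)\ge|\det M|_v^{d_v}$ since the $\ux_i$ lie exactly in $\cC_v$ there (no scalar correction is needed: $\lambda\cC$ has the same non-Archimedean components as $\cC$), and the product formula removes $\det M$. One remark, though: the covolume $2^{-r_2}|D(K)|^{1/2}$ plays no role in this half, so your claim that the complex-place factor ``cancels the $2^{-sr_2}$ in the covolume'' is spurious. What you actually need is that the complex-place constant $(2\pi)^s/(2s)!$ dominates $(s!)^{-2}=(s!)^{-d_v}$, i.e.\ $(2\pi)^s\ge\binom{2s}{s}$, which holds since $\binom{2s}{s}<4^s$. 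With that bookkeeping the product over $v\mid\infty$ is indeed $\ge 2^{sr_1}(s!)^{-d}$.

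The upper bound is where there is a genuine gap. Applying an adelic Blichfeldt--Minkowski lemma to $\lambda_1\cC$ gives only the first-minimum inequality $\lambda_1^{sd}\mu(\cC)\le 2^{s(r_1+r_2)}|D(K)|^{s/2}$; passing from this to the product $\lambda_1\cdots\lambda_s$ by ``quotienting out the span of $\ux_1,\dots,\ux_{i-1}$ and applying Blichfeldt in the quotient'' is precisely the naive induction that fails to prove Minkowski's second theorem even in the classical setting over $\bZ^n$: the volume of the projected body is not controlled from below by $\mu(\cC)$ divided by the correct power of the earlier minima (Fubini gives an inequality in the wrong direction), so at best one recovers a bound with a much worse constant than $2^{s(r_1+r_2)}|D(K)|^{s/2}$. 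This is not a detail you have deferred but the entire content of the second theorem. The actual proofs avoid this: Bombieri--Vaaler reduce to the classical second theorem on $\bR^{ds}$ by restriction of scalars (each adelic minimum produces $d$ equal rational minima, using that $\cC_v$ is an $\cO_v$-module at finite places and is balanced at infinite places, and the discriminant enters through the covolume of the resulting lattice), while McFeat adapts Minkowski's deformation argument directly to the adelic body. As stated, your upper-bound half does not close.
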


We refer the reader to \cite[Theorem 5]{Mc1971} and
\cite[Theorem 3]{BV1983} for the upper bound on the
product of the minima (see also the upper bound of Thunder in
\cite[Theorem 1 and Corollary]{Th2002}).  The lower bound
given here is taken from \cite[Theorem 6]{Mc1971};
it is slightly weaker than the one of \cite[Theorem 6]{BV1983}.

%
%

\subsection{Hermite's approximations}
\label{subsec:approx:Hermite}
Let $\alpha_1,\dots,\alpha_s$ be distinct elements of $K$.
For each $s$-tuple $\un:=(n_1,\dots,n_s)\in\bN^s$, we define
polynomials of $K[z]$ by
\[
 f_\un(z)=(z-\alpha_1)^{n_1}\cdots(z-\alpha_s)^{n_s}
\et
 P_\un(z)=\sum_{k=0}^N f_\un^{(k)}(z)
\]
where
\[
 N=n_1+\cdots+n_s
\]
represents the degree of $f_\un$, and where
$f_\un^{(k)}$ denotes the $k$-th derivative
of $f_\un$ for each integer $k\ge 0$. We then form
the point
\[
 a_\un:=\big(P_\un(\alpha_1),\dots,P_\un(\alpha_s)\big)
   \in K^s.
\]
We call it the \emph{Hermite approximation
of order $\un$ for the $s$-tuple
$(\alpha_1,\dots,\alpha_s)$}.  Our goal is to give
a precise meaning to the term ``approximation'',
by working in the adeles of $K$.

We first recall some properties of these
points.  For simplicity, we start by assuming that
$K\subset \bC$.  We find
\begin{equation}
 \label{resultat:equadiff}
 \frac{d}{dz}\big( P_\un(z)e^{-z}\big)
  = \big( P_\un'(z)-P_\un(z) \big) e^{-z}
  =  -f_\un(z)e^{-z}.
\end{equation}
So, for any pair $i,j\in\{1,\dots,s\}$, we obtain
\begin{equation*}
 \label{resultat:integrale_finie}
 P_\un(\alpha_i)e^{-\alpha_i}-P_\un(\alpha_j)e^{-\alpha_j}
  = \int_{\alpha_i}^{\alpha_j} f_\un(z) e^{-t}\,\dz\,,
\end{equation*}
independently of the path of integration from $\alpha_i$
to $\alpha_j$ in $\bC$.  Upon integrating along the
line segment $[\alpha_i,\alpha_j]$ joining those two
points and observing that
\[
 \max_{z\in [\alpha_i,\alpha_j]} |f_\un(z)|
 \le
 R^N
 \quad\text{with}\quad
 R=\max_{1\le k,\ell\le s}|\alpha_k-\alpha_\ell|,
\]
we deduce that
\begin{equation*}
 \label{resultat:maj_diff}
 \left|
 P_\un(\alpha_i)e^{-\alpha_i}-P_\un(\alpha_j)e^{-\alpha_j}
 \right|
 \le
 c_1 R^N
\end{equation*}
for a constant $c_1>0$ that is independent of the choice of $i$,
$j$ and $\un$.  Similarly, for $i=1,\dots,s$, the formula
\eqref{resultat:equadiff} yields
\begin{equation*}
 \label{resultat:integrale_infinie}
 P_\un(\alpha_i)
  = \int_0^\infty f_\un(z+\alpha_i) e^{-z}\,\dz\,,
\end{equation*}
by integrating along $[0,\infty)\subset\bR$.
Since $|f_\un(t+\alpha_i)|\le (t+R)^N$ for all $t\ge 0$,
we deduce that
\begin{equation*}
 \label{resultat:maj_point}
 |P_\un(\alpha_i)|
  \le \int_0^\infty (t+R)^N e^{-t}\,\dt
  \le e^R\int_0^\infty t^N e^{-t}\,\dt
  = e^R N!\,.
\end{equation*}

More generally, let $v$ be any Archimedean place of $K$.
Put
\begin{equation}
 \label{resultat:eq:Rv}
 R_v=\max_{1\le k,\ell\le s}|\alpha_k-\alpha_\ell|_v
\end{equation}
and choose an embedding $\sigma\colon K\to\bC$
such that $|\alpha|_v=|\sigma(\alpha)|$ for all
$\alpha\in K$.  Then, for any pair of indices
$i,j\in\{1,\dots,s\}$, the above computations yield
\begin{align}
 \label{resultat:v:maj_diff}
 \left|
  P_\un(\alpha_i)e^{-\alpha_i}-P_\un(\alpha_j)e^{-\alpha_j}
 \right|_v
 &=
 \left|
  \int_{\sigma(\alpha_i)}^{\sigma(\alpha_j)} f^\sigma_\un(z) e^{-z}\dz
 \right|
 \le c_v R_v^N,\\
 \label{resultat:v:maj_point}
 \left| P_\un(\alpha_i) \right|_v
 &\le
  e^{R_v} N!\,,
\end{align}
where $f^\sigma_\un$ denotes the image of $f_\un$ under
the ring homomorphism from $K[z]$ to $\bC[z]$ which fixes $z$
and coincides with $\sigma$ on $K$, and where $c_v>0$
depends only on $v$ and $\alpha_1,\dots,\alpha_s$.  Thus,
$a_\un$ is a projective approximation to
$(e^{\alpha_1},\dots,e^{\alpha_s})$
at each Archimedean place of $K$.

In this paper, we establish an upper bound for the
integral in \eqref{resultat:v:maj_diff} which is
sharper than $c_vR_v^N$ for each Archimedean place $v$
of $K$.  We also provide analogs of
\eqref{resultat:v:maj_diff} and of \eqref{resultat:v:maj_point}
for the ultrametric places $v$ of $K$ whenever their
left hand side makes sense in $K_v$.
More precisely, as $e^{\alpha_j-\alpha_i}$ could
make sense in $K_v$ without $e^{\alpha_i}$ and
$e^{\alpha_j}$ making sense, we consider instead
the quantities
$|P_\un(\alpha_i)e^{\alpha_j-\alpha_i}-P_\un(\alpha_j)|_v$.
Here again, we will need sharp estimates while
usually the ultrametric places are treated in an
expeditious manner.  In general, one chooses a common
denominator $b$ of $\alpha_1,\dots,\alpha_s$, that is an
integer $b\ge 1$ such that
$b\alpha_1,\dots,b\alpha_s\in \cO_K$.  Then the polynomial
$g(z):=b^Nf(z/b)$ has coefficients in $\cO_K$ and, for each
$i=1,\dots,s$, we find
\begin{equation*}
 \label{resultat:denom}
 \frac{b^N}{(n_i)!}P_\un(\alpha_i)
 = \sum_{k=n_i}^N \frac{b^N}{(n_i)!} f^{(k)}(\alpha_i)
 = \sum_{k=n_i}^N \frac{b^k k!}{(n_i)!} \cdot \frac{g^{(k)}(b\alpha_i)}{k!}
 \in \cO_K\,.
\end{equation*}
For example, if $n_1=\cdots=n_s=n$, this implies
that $(b^N/n!)a_\un\in\cO_K^s$.

The above estimates are key-ingredients in the classical
proof of the Lindemann-Weiertrass theorem asserting that
$e^{\alpha_1},\dots,e^{\alpha_s}$ are linearly independent
over $K$.  However, two more ingredients are missing.  The first
one is a reduction step of Weierstrass which is explained
in \cite[Appendix, \S3]{Ma1976}
(see also \cite[Chapter 1, \S3]{Ba1975}).  The second one is
the existence of families of $s$ linearly independent
approximations over $K$.  Hermite himself noticed this problem
and solved it in order to prove the transcendence of $e$.  We
will use here the following remarkable result of Mahler.

\begin{theorem}[Mahler]
\label{res:thm:Mahler}
Suppose that $\un=(n_1,\dots,n_s)\in\bN_+^s$ has positive
coordinates.  Let $\ue_1=(1,0,\dots,0),\dots,
\ue_s=(0,\dots,0,1)$ denote the canonical basis elements
of\/ $\bZ^s$.  Then, we have
\begin{equation}
 \label{resultat:det:Mahler}
 \Delta_\un
 :=
 \det(a_{\un-\ue_1},\dots,a_{\un-\ue_s})
 =
 \prod_{i=1}^s
     \left(
     (n_i-1)!\prod_{k\neq i} (\alpha_i-\alpha_k)^{n_k}
     \right)
 \neq 0.
\end{equation}
\end{theorem}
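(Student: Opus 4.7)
My plan hinges on one algebraic identity together with a cascade of column operations. The identity, valid for any $\un \in \bN_+^s$ and distinct $i, j \in \{1, \ldots, s\}$, is
\[
P_{\un - \ue_i} - P_{\un - \ue_j} = (\alpha_i - \alpha_j)\, P_{\un - \ue_i - \ue_j}.
\]
To establish it, I would note that $f_{\un - \ue_i} = f_\un/(z - \alpha_i)$, so by partial fractions $f_{\un - \ue_i} - f_{\un - \ue_j} = (\alpha_i - \alpha_j)\, f_{\un - \ue_i - \ue_j}$. Since the map $f \mapsto P$ sending $f$ to $\sum_{k \ge 0} f^{(k)}$ is linear, the identity for the $P$'s follows by applying this operator to both sides.

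Next I would apply this identity iteratively to perform a cascade of column operations on the matrix $\big(P_{\un - \ue_i}(\alpha_j)\big)_{j, i}$. Replacing column $i$ with column $i$ minus column $1$ for $i = 2, \ldots, s$ turns the new column $i$ into $(\alpha_i - \alpha_1)\, a_{\un - \ue_1 - \ue_i}$, so the scalar $(\alpha_i - \alpha_1)$ factors out. A second wave uses the updated column $2$ against columns $3, \ldots, s$ (invoking the identity with $\un - \ue_1$ in place of $\un$), producing factors $(\alpha_i - \alpha_2)$, and so on through $s - 1$ waves. At the end,
\[
\Delta_\un = \prod_{1 \le k < i \le s}(\alpha_i - \alpha_k) \cdot \det\!\big(a_{\un^{(1)}}, \ldots, a_{\un^{(s)}}\big),
\]
where $\un^{(i)} := \un - (\ue_1 + \ue_2 + \cdots + \ue_i)$.

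The remaining task is to evaluate the determinant $D^* := \det\!\big(a_{\un^{(1)}}, \ldots, a_{\un^{(s)}}\big)$ and verify that it equals the quotient of the target product by $\prod_{k < i}(\alpha_i - \alpha_k)$. This is the main obstacle: the staircase $\un^{(i)}$ is not of the form $\un' - \ue_i$ for a common $\un'$, so one cannot simply reapply Step 2. I would finish by induction on $N = |\un|$. The base case $\un = (1, \ldots, 1)$ is handled directly, since $a_{\un^{(s)}} = (1, \ldots, 1)$ and expansion along the last column collapses $D^*$ to a Vandermonde. For the inductive step the cleanest route is to bypass the staircase altogether: use the (formally valid) integral representation
\[
P_{\un - \ue_i}(\alpha_j) = \int_0^\infty \frac{f_\un(\alpha_j + t)}{\alpha_j + t - \alpha_i}\, e^{-t}\, dt,
\]
bring the integrals outside via multilinearity of $\det$, and apply Cauchy's determinant formula for $\det\!\big(1/(u_j - v_i)\big)$ with $u_j = \alpha_j + t_j$, $v_i = \alpha_i$. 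The denominator of Cauchy's formula combines with $\prod_j f_\un(\alpha_j + t_j)$ to produce $\prod_j g(\alpha_j + t_j)$ with $g(z) = \prod_k (z - \alpha_k)^{n_k - 1}$, extracting the factor $\prod_{k < i}(\alpha_i - \alpha_k)$ exactly as above and reducing the residual integral to one of the same shape but with $g$ in place of $f_\un$; iterating (or inducting on $N$) then delivers the factors $(n_i - 1)!$ and the missing powers of $(\alpha_i - \alpha_k)^{n_k - 1}$, yielding the claimed product.
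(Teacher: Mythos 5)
Your Step 1 identity is correct (it is precisely formula \eqref{rel:eq2} of Appendix \ref{sec:rel}, shifted by $\ue_\ell$), and the cascade of column operations is valid: at wave $m$ you only need the $m$-th and $i$-th coordinates of $\un-\ue_1-\cdots-\ue_{m-1}$ to be positive, which holds because $\un\in\bN_+^s$. The base case $\un=(1,\dots,1)$ is also fine. So the reduction $\Delta_\un=\prod_{k<i}(\alpha_i-\alpha_k)\,D^*$ is genuinely established. (For the record, the paper does not prove this theorem itself --- it refers to Mahler --- so I am judging your argument only on its own terms.)

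The gap is exactly where you locate the ``main obstacle'': the evaluation of $D^*$, i.e.\ your inductive step, and the Cauchy-determinant device does not close it because it is circular. After pulling the integrals outside, factoring $\prod_j f_\un(u_j)$ out of the rows and applying Cauchy's formula, the residual is
\[
I(g)=\int_0^\infty\!\!\cdots\!\int_0^\infty \prod_{j}g(t_j+\alpha_j)\,\prod_{j<k}(u_j-u_k)\,e^{-t_1-\cdots-t_s}\,dt_1\cdots dt_s,
\]
whose integrand contains the \emph{Vandermonde} $\prod_{j<k}(u_j-u_k)$, not the Cauchy kernel $\det\big(1/(u_j-v_i)\big)$; it is therefore not ``of the same shape with $g$ in place of $f_\un$''. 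Writing that Vandermonde as $\det\big(q_k(u_j)\big)$ with $q_k(z)=\prod_{l>k}(z-\alpha_l)$ and pushing the integrals back inside shows that $I(g)$ \emph{is} the staircase determinant $D^*$ you already had from Step 2; while re-introducing the Cauchy kernel requires multiplying by $\prod_{j,i}(u_j-v_i)$, which converts $\prod_j g(u_j)$ back into $\prod_j f_\un(u_j)$ and undoes the step. So the iteration never extracts the factors $(n_i-1)!$ or the powers $(\alpha_i-\alpha_k)^{n_k-1}$, and the proof is incomplete. A way to close your induction on $N$ is already in Appendix \ref{sec:rel}: since $f_\un(\alpha_j)=0$ for $\un\in\bN_+^s$, formulas \eqref{rel:eq1} and \eqref{rel:eq2} combine to give $a_{\un+\ue_\ell-\ue_k}=(\alpha_k-\alpha_\ell)a_{\un-\ue_k}+\sum_j n_j a_{\un-\ue_j}$, hence $A_{\un+\ue_\ell}=M_{\un,\ell}A_\un$ with $\det M_{\un,\ell}=n_\ell\prod_{j\neq\ell}(\alpha_j-\alpha_\ell)$, which is exactly the ratio $\Delta_{\un+\ue_\ell}/\Delta_\un$ predicted by \eqref{resultat:det:Mahler}; descending to $\un=(1,\dots,1)$ and invoking your base case then finishes the proof.
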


The proof of Mahler is clever.  It is presented in
\cite[\S8]{Ma1932} and again in \cite[Appendix, \S16]{Ma1976}.
In the case where
$n_1=\dots=n_s$, the result is due to Hermite
\cite{He1873}.  Hermite's proof is different.  It is
based on the recurrence relations satisfied
by the points points $\ua_\un$ which we generalize in
Appendix \ref{sec:rel}.

%
%

\subsection{Statement of the main result}
\label{subsec:res}
With the above notation, let $E$ be the finite set
consisting of all Archimedean places of $K$
together with the ultrametric places $v$ of $K$ such that
$|\alpha_i-\alpha_j|_v\neq 1$ for at least one pair of indices
$i,j\in\{1,\dots,s\}$ with $i\neq j$.  For each $s$-tuple
$\un=(n_1,\dots,n_s)\in\bN_+^s$, we let $N$ denote its sum
and we define an adelic convex body $\cC_\un=\prod_v\cC_{\un,v}$
of $K^s$ as follows.
\begin{itemize}
\item[(i)] If $v\,|\,\infty$ is the place attached to an
 embedding $\sigma\colon K\hookrightarrow\bC$, we define
 $R_v$ by \eqref{resultat:eq:Rv}. Then $\cC_{\un,v}$ is the
 set of points $(x_1,\dots,x_s)\in K_v^s$ which satisfy
\begin{equation}
 \label{res:eq:C_arch}
 |x_i|_v\le e^{R_v} (N-1)!
 \et
 |x_ie^{\alpha_j-\alpha_i}-x_j|_v
 \le \max_{1\le k\le s}
     \left|
       \int_{\sigma(\alpha_i)}^{\sigma(\alpha_j)}
          f_{\un-\ue_k}^\sigma(z)e^{\sigma(\alpha_j)-z}\,\dz
     \right|
\end{equation}
for each pair of indices $i,j\in\{1,\dots,s\}$ with $i\neq j$.
\smallskip
\item[(ii)]
If $v\in E$ and if $v\,|\,p$ for a prime number $p$,
then $\cC_{\un,v}$ is the set of points 
$(x_1,\dots,x_s)$ in $K_v^s$ which satisfy
\begin{equation}
 \label{res:eq:ultra1}
 |x_i|_v
  \le p^3 N
      \prod_{1\le k\le s}
      \max\big\{|\alpha_i-\alpha_k|_v,\,p^{-1/(p-1)}\big\}^{n_k}
\end{equation}
for $i=1,\dots,s$, as well as
\begin{equation}
 \label{res:eq:ultra2}
 |x_ie^{\alpha_j-\alpha_i}-x_j|_v
 \le p^3 N
      \prod_{1\le k\le s}
      \max\big\{|\alpha_i-\alpha_k|_v,|\alpha_j-\alpha_k|_v\big\}^{n_k}.
\end{equation}
for each pair of integers $i,j\in\{1,\dots,s\}$ such that
$0 < |\alpha_j-\alpha_i|_v < p^{-1/(p-1)}$.
\smallskip
\item[(iii)] Finally, if $v\notin E$, then $\cC_{\un,v}$ is the
set of points $(x_1,\dots,x_s)\in K_v^s$ satisfying
\[
 |x_i|_v \le |(n_i-1)!|_v
\]
for $i=1,\dots,s$.
\end{itemize}
The crucial feature of these adelic convex bodies
$\cC_\un$ is that the linear forms which define them
involve only the complex or $p$-adic values of the
exponential function at the points $\alpha_i$ or
$\alpha_j-\alpha_i$.  In view of the estimates in
\S\ref{subsec:approx:Hermite}, their component $\cC_{\un,v}$
contains the points $a_{\un-\ue_1},\dots,a_{\un-\ue_s}$
for each Archimedean place $v$ of $K$.  We will show in the
next section that this holds in fact for all places of $K$,
yielding the first assertion in the following result.

\begin{theorem}
\label{res:principal}
Let $\un=(n_1,\dots,n_s)\in\bN_+^s$. Then the adelic convex
body $\cC_\un$ contains the points
$a_{\un-\ue_1},\dots,a_{\un-\ue_s}$.  Moreover, upon
setting $N=n_1+\cdots+n_s$, we have the following volume
estimates.
\begin{itemize}
\item[(i)] If\/ $v\mid\infty$, then
 \[
   (s!)^{-1} |\Delta_\un|_v
    \le \mu_v(\cC_{\un,v})^{1/d_v}
    \le c_v N^{2s-2} |\Delta_\un|_v
 \]
 for a constant $c_v>0$ depending only on
 $\alpha_1,\dots,\alpha_s$ and $v$.
\smallskip
\item[(ii)] If\/ $v\in E$ and if $v\mid p$ for a prime number $p$, then
 \[
   |\Delta_\un|_v\le \mu_v(\cC_{\un,v})^{1/d_v} \le (p^3N)^s |\Delta_\un|_v.
 \]
\item[(iii)] If\/ $v\notin E$, then $\mu_v(\cC_{\un,v})^{1/d_v}=|\Delta_\un|_v$.
\end{itemize}
\end{theorem}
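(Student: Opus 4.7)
This reduces to coordinate-wise verification at each place $v$.  At Archimedean $v$, the estimates \eqref{resultat:v:maj_point} and \eqref{resultat:v:maj_diff} applied to $\un-\ue_l$ (whose sum is $N-1$) are exactly the inequalities of \eqref{res:eq:C_arch}, once the difference bound is multiplied by $|e^{\sigma(\alpha_j)}|$ and the $\max_k$ is realised by the choice $k=l$.  At ultrametric $v\in E$ I would derive $p$-adic analogs. For $|P_{\un-\ue_l}(\alpha_i)|_v$, write $P_{\un-\ue_l}(\alpha_i)=\sum_{m\ge n_i'}f_{\un-\ue_l}^{(m)}(\alpha_i)$ where $n_i'=n_i-\delta_{il}$, and expand each derivative via $f_{\un-\ue_l}^{(m)}(\alpha_i)=\binom{m}{n_i'}n_i'!\,h_i^{(m-n_i')}(\alpha_i)$ with cofactor $h_i(z)=f_{\un-\ue_l}(z)/(z-\alpha_i)^{n_i'}$; the factorial ratios are controlled $p$-adically by the Legendre-type estimate $|n!|_v\ge p^{-d_v(n-1)/(p-1)}$, which is exactly where the term $p^{-1/(p-1)}$ inside the $\max$ of \eqref{res:eq:ultra1} and the prefactor $p^3N$ come from.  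For the difference bound \eqref{res:eq:ultra2} I would use the identity $(P_{\un}e^{-z})'=-f_{\un}e^{-z}$ rewritten as a formal Taylor expansion in $\alpha_j-\alpha_i$, which converges $p$-adically under the hypothesis $|\alpha_j-\alpha_i|_v<p^{-1/(p-1)}$, and control the resulting coefficients by the same factorial estimates.  For $v\notin E$, where $|\alpha_i-\alpha_k|_v=1$ for $i\ne k$, the same Taylor expansion yields $|P_{\un-\ue_l}(\alpha_i)|_v\le|(n_i-1)!|_v$ directly, since each factorial quotient $(n_i'+j)!/j!$ equals $n_i'!$ times an integer binomial coefficient.

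\textbf{Volume estimates.}  The common tool is the linear change of variables $y_1=x_1$, $y_j=x_j-x_1 e^{\alpha_j-\alpha_1}$ for $j\ge 2$ (Jacobian $1$), which converts the mixed constraints defining $\cC_{\un,v}$ into box-type constraints on $(y_1,\ldots,y_s)$.  At Archimedean $v$, the upper bound $\mu_v(\cC_{\un,v})^{1/d_v}\le c_v N^{2s-2}|\Delta_\un|_v$ reduces to a sharp evaluation of the integrals appearing in \eqref{res:eq:C_arch}, to be compared to $|\Delta_\un|_v$ via Mahler's identity \eqref{resultat:det:Mahler}.  The lower bound follows from the already-established containment $\pm a_{\un-\ue_1},\ldots,\pm a_{\un-\ue_s}\in\cC_{\un,v}$: the convex hull of these $2s$ points is a cross-polytope of $\mu_v$-measure comparable to $(s!)^{-d_v}|\Delta_\un|_v^{d_v}$ (exactly $\frac{2^s}{s!}|\Delta_\un|_v$ for $d_v=1$ and a similar explicit constant for $d_v=2$), yielding the factor $(s!)^{-1}$ after taking a $d_v$-th root.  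At ultrametric $v\in E$, the indices partition into $v$-adic clusters, namely the equivalence classes of $i\sim j\Leftrightarrow|\alpha_i-\alpha_j|_v<p^{-1/(p-1)}$, an equivalence by the ultrametric inequality; I would perform the change of variable cluster by cluster, so that $\mu_v(\cC_{\un,v})$ factors as a product of cluster contributions whose factorisation matches that of $|\Delta_\un|_v=\prod_i|(n_i-1)!|_v\prod_{i\ne k}|\alpha_i-\alpha_k|_v^{n_k}$ up to $(p^3N)^s$.  The case $v\notin E$ is immediate: $\cC_{\un,v}=\bigoplus_i(n_i-1)!\,\cO_v$ has $\mu_v$-measure $\prod_i|(n_i-1)!|_v^{d_v}$, equal to $|\Delta_\un|_v^{d_v}$ since $|\alpha_i-\alpha_k|_v=1$.

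\textbf{Main obstacle.}  The hard step is the Archimedean upper bound, which requires a majoration of the integrals $\int_{\sigma(\alpha_i)}^{\sigma(\alpha_j)}f_{\un-\ue_k}^\sigma(z)e^{\sigma(\alpha_j)-z}\,dz$ much sharper than the trivial $c_v R_v^{N-1}$: what must emerge is precisely the product of factorials $(n_i-1)!$ that appears in $\Delta_\un$.  I expect this to come from a steepest-ascent choice of integration path through the critical points of the integrand, with a Gaussian-type concentration at each root $\alpha_i$ of multiplicity $n_i-\delta_{ik}$ of $f_{\un-\ue_k}^\sigma(z)e^{-z}$; this is likely the source of the polynomial factor $N^{2s-2}$ in the statement, arising from the $s-1$ Gaussian widths times the asymptotic factor produced at each saddle.
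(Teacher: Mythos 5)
Your containment argument and your lower bound are essentially the paper's (the lower bound is exactly the image of the $\ell^1$-ball, resp.\ $\cO_v^s$, under the map sending the standard basis to $a_{\un-\ue_1},\dots,a_{\un-\ue_s}$, whose determinant is $\Delta_\un$). The gap is in the upper bounds, which are the real content of the theorem. The change of variables $y_1=x_1$, $y_j=x_j-x_1e^{\alpha_j-\alpha_1}$ selects only the $s-1$ constraints of $\cC_{\un,v}$ involving the index $1$, and the product of the corresponding right-hand sides is in general far larger than $|\Delta_\un|_v$. One must instead select the constraints along a suitably chosen spanning tree (or, ultrametrically, a rooted forest): Proposition \ref{graphes:prop} then makes the selected forms unimodular, and the whole difficulty is the choice of tree. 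Already at an ultrametric place your ``one change of variables per cluster'' fails: for three points $\rho,\alpha,\beta$ in one cluster with $|\rho-\alpha|_v=|\rho-\beta|_v=r$ and $|\alpha-\beta|_v=r'\ll r$, the star at $\rho$ gives a product of bounds of order $r^8$ while $|\Delta_\un|_v$ is of order $r^4r'^2$; the paper needs the recursive tree of Proposition \ref{ultra:prop:arbre}, whose defining property \eqref{ultra:prop:arbre:eq} is exactly what makes the product of the $\max\{|\alpha-\gamma|_v,|\beta-\gamma|_v\}^{n_\gamma}$ telescope to $\delta^N\prod_{\beta\neq\gamma}|\beta-\gamma|_v^{n_\gamma}$.

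At the Archimedean places the mechanism you anticipate is also not the one that works. There is no Gaussian concentration or saddle-point asymptotics: the integral along an edge is bounded crudely by (length of path)$\times$(max of the integrand), and the point is that for the steepest-ascent tree of Theorem \ref{arch:thm:graphe} (built for the gcd $f$ of the $f_{\un-\ue_k}$) the maximum of $|f|$ along the edge indexed by the critical point $\beta$ is exactly $|f(\beta)|$, and each $\beta$ indexes exactly $m(\beta)$ edges. Hence the product over all edges is $\prod_\beta|f(\beta)|^{m(\beta)}$, the Chudnovsky semi-resultant of $f$ and $f'$, which is computed by an exact algebraic identity (Proposition \ref{semi-resultant:prop}) as $N^{-N}\prod_i n_i^{n_i}\prod_{k\neq i}(\alpha_i-\alpha_k)^{n_k}$; the factorials in $\Delta_\un$ then appear through the elementary inequality $N!\prod_i n_i^{n_i}\le N^N\prod_i n_i!$, and the factor $N^{2s-2}$ comes from the $s-1$ path lengths $O(R_vN)$ together with $(N-1)!/(N-s)!\le N^{s-1}$, not from Gaussian widths. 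With your star at $\alpha_1$, the path from $\alpha_1$ to $\alpha_j$ traverses several critical points and contributes the largest critical value along it, so a single large $|f(\beta)|$ can be counted $s-1$ times and the product need not be $O(N^{2s-2}|\Delta_\un|_v)$.
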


Note that, for each place $v$ of $K$, these estimates enclose
the volume of $\cC_{\un,v}$ between limits whose ratio
is a polynomial in $N$ while these limits themselves grow
like $|\Delta_\un|_v$, that is roughly like an
exponential in $N$ if $v\nmid\infty$ or like $N!$
if $v\mid \infty$.  When $v\mid \infty$, we give an explicit
value for the constant $c_v$ in Theorem \ref{volarch:thm}.

The lower bounds for $\mu_v(\cC_{\un,v})$ follow
easily from the definition of $\Delta_\un$ as a determinant
in \eqref{resultat:det:Mahler},
if we take for granted the fact that $\cC_{\un,v}$ contains the
points $\ua_{\un-\ue_i}$ for $i=1,\dots,s$.  Indeed, let
$T\colon K_v^s\to K_v^s$ be the $K_v$-linear map defined by
\[
 T(x_1,\dots,x_s)=x_1\ua_{\un-\ue_1}+\cdots+x_s\ua_{\un-\ue_s}
\]
for each $(x_1,\dots,x_s)\in K_v^s$.  Then
$\cC_{\un,v}$ contains $T(\cE_v)$ where $\cE_v$ is given by
\begin{align*}
  \cE_v &=\{(x_1,\dots,x_s)\in K_v^s\,;\,|x_1|_v+\cdots+|x_s|_v\le 1\}
  &&\text{if $v\mid \infty$,}\\
  \cE_v &=\cO_v^s
  &&\text{if $v\nmid\infty$.}
\end{align*}
As $|\det T|_v=|\Delta_\un|_v$, we have $\mu_v(T(\cE_v))
=|\Delta_\un|_v^{d_v}\mu_v(\cE_v)$.  If $v\mid\infty$,
we also have $\mu_v(\cE_v)\ge (s!)^{-d_v}$, thus $\mu_v(\cC_{\un,v})^{1/d_v}
\ge (s!)^{-1}|\Delta_\un|_v$.  If $v\nmid\infty$, we simply have
$\mu_v(\cE_v)=1$, thus $\mu_v(\cC_{\un,v})^{1/d_v} \ge |\Delta_\un|_v$.

Our main contribution therefore lies in the upper bounds for the volume
of the components $\cC_{\un,v}$, and we explain our strategy below.  These
upper bounds in turn yield an upper bound for the volume of $\cC_\un$
from which we derive the following conclusion thanks to
the adelic Minkowski theorem.

\begin{cor}
\label{res:cor}
In the notation of Theorem \ref{res:principal}, we have
\[
 cN^{-g}\le \lambda_1(\cC_\un)\le\cdots\le\lambda_s(\cC_\un)\le 1
 \quad
 \text{where}\quad
 g=s-2+s\sum_{v\in E} \frac{d_v}{d},
\]
and where $c>0$ is a constant depending only on $\alpha_1,\dots,\alpha_s$.
\end{cor}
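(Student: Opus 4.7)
The plan is to combine both assertions of Theorem~\ref{res:principal} with the adelic Minkowski theorem (Theorem~\ref{res:thm:MBV}) and the product formula in~$K$. The upper bound $\lambda_s(\cC_\un)\le 1$ is immediate from the first assertion of Theorem~\ref{res:principal}: the body $\cC_\un$ contains the $s$ Hermite points $a_{\un-\ue_1},\dots,a_{\un-\ue_s}$, and Mahler's theorem (Theorem~\ref{res:thm:Mahler}) guarantees that the determinant $\Delta_\un$ of these vectors is nonzero, so they are linearly independent over $K$. This already gives $\lambda_1\le\lambda_2\le\dots\le\lambda_s\le 1$.

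For the lower bound on $\lambda_1(\cC_\un)$, I first estimate the adelic volume $\mu(\cC_\un)=\prod_v\mu_v(\cC_{\un,v})$ by multiplying the three local upper bounds in Theorem~\ref{res:principal}(i)--(iii). The key algebraic point is that the factors $|\Delta_\un|_v^{d_v}$ appearing at every place collapse via the product formula $\prod_v |\Delta_\un|_v^{d_v}=1$, which holds because $\Delta_\un\in K^\times$. The remaining powers of $N$ contribute a total exponent of $(2s-2)\sum_{v\mid\infty}d_v + s\sum_{v\in E,\,v\nmid\infty}d_v$; using $\sum_{v\mid\infty}d_v=d$ and $\{v:v\mid\infty\}\subset E$, this rearranges to exactly $gd$ with $g=s-2+s\sum_{v\in E}d_v/d$. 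The remaining multiplicative constants (the Archimedean $c_v^{d_v}$ and the finite $(p^3)^{sd_v}$ factors) depend only on $K$ and $\alpha_1,\dots,\alpha_s$, so I obtain $\mu(\cC_\un)\le C\,N^{gd}$ for a constant $C$ depending only on these data.

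Substituting this bound into the lower half of the adelic Minkowski inequality, $2^{sr_1}(s!)^{-d}\le (\lambda_1(\cC_\un)\cdots\lambda_s(\cC_\un))^d\,\mu(\cC_\un)$, yields $\lambda_1(\cC_\un)\cdots\lambda_s(\cC_\un)\ge c'N^{-g}$ for a suitable $c'>0$. Since each $\lambda_i\le\lambda_s\le 1$ by the first step, the product of the $\lambda_i$ is at most $\lambda_1(\cC_\un)$, and the claimed inequality $\lambda_1(\cC_\un)\ge cN^{-g}$ follows. The proof requires no new idea beyond Theorem~\ref{res:principal}; the only checkpoint of substance is the bookkeeping of the exponent of $N$ in the second paragraph, where one must confirm that the contributions from the Archimedean and finite components of $E$ combine (after absorbing $\sum_{v\mid\infty}d_v=d$) into precisely the exponent $g$ stated in the corollary.
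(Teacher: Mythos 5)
Your proof is correct and follows essentially the same route as the paper: bound $\mu(\cC_\un)$ by $CN^{gd}$ using the local volume estimates together with the product formula $\prod_v|\Delta_\un|_v^{d_v}=1$, get $\lambda_s(\cC_\un)\le 1$ from the linear independence of the Hermite points (Mahler's theorem), and conclude via the lower half of the adelic Minkowski inequality combined with $\lambda_1\cdots\lambda_s\le\lambda_1$. The exponent bookkeeping $(2s-2)d+s\sum_{v\in E,\,v\nmid\infty}d_v=gd$ is exactly the computation in the paper's proof.
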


\begin{proof}
Since $\prod_v |\Delta_\un|_v^{d_v} = 1$ and since $E$ contains all
Archimedean places of $K$, we find
\[
 \mu(\cC_n)
  = \prod_v \mu_v(\cC_{\un,v})
  \le \prod_{v\mid \infty} \Big(c_v^{d_v}N^{(2s-2)d_v}\Big)
      \prod_{v\in E,\,v\mid p} (p^3N)^{sd_v}
  = c_1^d N^{gd}
\]
where $c_1>0$ is independent of $\un$.
Since $\cC_\un$ contains the points $a_{\un-\ue_1},\dots,a_{\un-\ue_s}$
of $K^s$ and since, by Theorem \ref{res:thm:Mahler}, these points
are linearly independent over $K$, we also have
\[
  \lambda_1(\cC_\un)\le\cdots\le\lambda_s(\cC_\un)\le 1.
\]
Thus, by Theorem \ref{res:thm:MBV}, we obtain
\[
 (s!)^{-1}
  \le \lambda_1(\cC_\un)\cdots\lambda_s(\cC_\un)\mu(\cC_\un)^{1/d}
  \le \lambda_1(\cC_\un)c_1N^g,
\]
so $\lambda_1(\cC_\un)\ge cN^{-g}$ with $c=1/(c_1s!)$.
\end{proof}

The proof of Theorem \ref{res:principal} uses general
results on univariate polynomials $f(z)\in\bC[z]$
which we could not find in the
literature.  Suppose that $f$ has degree $N\ge 1$.
Let $A$ be its set of roots in $\bC$ and let $B$ be the set
of roots of its derivative $f'$ which do not belong to $A$.
In section \ref{sec:descente}, we consider the paths
of steepest descent for $|f|$ starting from an arbitrary
point $\beta$ of $\bC$.  These paths necessarily end in an
element of $A$. We show that they are contained in the
convex hull of $A\cup\{\beta\}$, with length at most
$\pi RN$ where $R$ is the radius of any disk containing
$A\cup\{\beta\}$.  In section \ref{sec:arch}, for each
$\beta\in B$, we denote by $m(\beta)$ the multiplicity of
$\beta$ as a root of $f'$ and, starting from $\beta$,
we choose $m(\beta)+1$ paths of steepest descent for $|f|$
which are locally distinct in a neighborhood of $\beta$.
These paths draw a graph on $A\cup B$ and we show that
this graph is in fact a tree.  We extract from it a sub-graph
$G$ on $A$ which is also a tree with edges indexed by $B$.
Then, for each edge of $G$ with end points $\alpha,\alpha'
\in A$, indexed by $\beta\in B$, we obtain a path
joining $\alpha$ to $\alpha'$ passing through
$\beta$, with length at most $2\pi RN$, along which $|f|$ is
maximal at the point $\beta$.

For the proof of Theorem \ref{res:principal} (i), we may
assume that the given place $v\mid \infty$ comes from
an inclusion $K\subset \bC$.  We then apply the above
construction, choosing $f$ to be the gcd of the
polynomials $f_{\un-\ue_1},\dots,f_{\un-\ue_s}$.  If the
coordinates of $\un\in\bN_+^s$ are all $\ge 2$, we thus
obtain a tree $G$ on $A=\{\alpha_1,\dots,\alpha_s\}$.
Then, for each edge of $G$ with end points $\alpha_i,\alpha_j$,
we bound from above the integrals in \eqref{res:eq:C_arch}
as a function of $|f(\beta)|$ where $\beta\notin A$ is
the corresponding root of $f'$.  From this, we deduce in
section \ref{sec:volarch} an upper bound for the volume of the
convex body $\cC_{\un,v}$ in terms of the product of the
values $|f(\beta)|^{m(\beta)}$ with $\beta\in B$, this being
the Chudnovsky semi-resultant of $f$ and $f'$.  The upper bound for
$\mu_v(\cC_{\un,v})$ then follows thanks to the computation
of this semi-resultant in section \ref{sec:semi-resultant}.
The general case where at least one coordinate of $\un$ is
equal to $1$ requires a slight adjustment.

The treatment of the ultrametric places $v\nmid\infty$ is
simpler.  In section \ref{sec:estultra}, we show that
$\cC_{\un,v}$ contains the points
$\ua_{\un-\ue_1},\dots,\ua_{\un-\ue_s}$.  Afterwards,
in section \ref{sec:ultra}, we construct a rooted forest on
$\{\alpha_1,\dots,\alpha_s\}$ associated with the place $v$.
This allows us to select $s$ inequalities among
\eqref{res:eq:ultra1} and \eqref{res:eq:ultra2} and to
deduce from them the required upper bound on the volume
of $\cC_{\un,v}$ in section \ref{sec:volultra}.
The relevant notions from graph theory are recalled in
section \ref{sec:graphes}.

In section \ref{sec:dexp}, we restrict to ``diagonal''
approximations to two exponentials, namely to the case
$s=2$ and $n_1=n_2$.  In this situation, we provide a refined form
of our main result whose proof relies only on the estimates from
sections \ref{subsec:approx:Hermite} and \ref{sec:estultra}.
We then use it to prove Propositions
\ref{intro:prop:imaginaire} and \ref{intro:prop:e3} from the introduction.

We conclude in section \ref{sec:num} by explaining how
Hermite's recurrence formulas recalled in Appendix
\ref{sec:rel} can be used to compute efficiently
the partial quotients in the continued fraction expansion
of $e^3$.  This in turn permits to validate the inequalities
\eqref{intro:eq:loglog} in less than two hours of
computation on a small desk computer.

%
%

\section{Ultrametric estimates}
\label{sec:estultra}

Let $v$ be a place of $K$ above a prime number $p$.
In this section, we complete
the proof of the first assertion in Theorem \ref{res:principal}
by showing that the component $\cC_{\un,v}$ of $\cC_\un$
contains the points $\ua_{\un-\ue_1},\dots,\ua_{\un-\ue_s}$
for each $\un\in\bN_+^s$. To this end, we use the following
notation and results.

For each $a\in\bC_p$ and each $r>0$, we denote by
\[
 B(a,r)=\{z\in\bC_p\,;\, |z-a|_p\le r\}
\]
the closed disk of $\bC_p$ with center $a$ and radius $r$
(both closed and open in $\bC_p$).  For such a disk
$B=B(a,r)$ and for any analytic function $g\colon B\to \bC_p$,
we define
\[
 |g|_B=\sup\{|g(z)|_p\,;\, z\in B\}.
\]
This quantity can also be computed from the Taylor series
expansion of $g$ around the point $a$ via the formula
\[
 |g|_B=\sup_{k\in\bN} \left|\frac{g^{(k)}(a)}{k!}\right|_p r^k,
\]
which yields the $p$-adic form of Cauchy's inequalities
\[
 |g^{(k)}(a)|_p \le |k!|_p r^{-k} |g|_B
 \quad
 (k\in\bN)
\]
(see \cite[\S1.5]{Ro1978}).  For the computations, we also use
the estimates
\begin{equation}
 \label{estultra:eq:delta}
 \delta^k\le |k!|_p \le k\delta^{k-p} \le p^2k\delta^k
 \quad
 (k\in \bN),
 \quad
 \text{where}
 \quad
 \delta=p^{-1/(p-1)},
\end{equation}
which follow from the formula $|k!|_p=p^{-m}$ where
$m=\sum_{\ell=1}^\infty \lfloor k/p^\ell \rfloor$.

\begin{lemma}
 \label{estultra:lemme1}
Let $\un=(n_1,\dots,n_s)\in\bN^s$, let $N=n_1+\cdots+n_s$, and
let $i,j\in\{1,\dots,s\}$. Then, we have
\begin{equation}
 \label{estultra:lemme1:eq1}
 |P_\un(\alpha_i)|_v
  \le p^2 N \prod_{k=1}^s \max\{|\alpha_i-\alpha_k|_v,\,\delta\}^{n_k}.
\end{equation}
If $|\alpha_i-\alpha_k|_v\le 1$ for $k=1,\dots,s$,
we also have
\begin{equation}
 \label{estultra:lemme1:eq1bis}
 |P_\un(\alpha_i)|_v
  \le |n_i!|_v.
\end{equation}
Finally, if $\rho=|\alpha_i-\alpha_j|_v$ satisfies $0<\rho<\delta$,
we have
\begin{equation}
 \label{estultra:lemme1:eq2}
 |P_\un(\alpha_i)e^{\alpha_j-\alpha_i}-P_\un(\alpha_j)|_v
  \le \frac{\rho}{\delta} p^2  N \prod_{k=1}^s
      \max\{|\alpha_i-\alpha_k|_v,\,|\alpha_j-\alpha_k|_v\}^{n_k}.
\end{equation}
\end{lemma}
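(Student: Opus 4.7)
The plan is to prove all three inequalities by applying the $p$-adic Cauchy inequalities recalled just above the lemma to the polynomial $f_\un$ on an appropriately chosen closed disk of $\bC_p$ centered at $\alpha_i$, then summing over the derivatives via the ultrametric inequality. The crucial starting observation is that $f_\un^{(k)}(\alpha_i)=0$ for $k<n_i$, so $P_\un(\alpha_i)=\sum_{k=n_i}^N f_\un^{(k)}(\alpha_i)$, and more generally $P_\un^{(k)}(\alpha_i)=\sum_{m=k}^N f_\un^{(m)}(\alpha_i)$ by differentiating $P_\un=\sum_{m=0}^N f_\un^{(m)}$.

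For \eqref{estultra:lemme1:eq1} the natural choice of radius is $r=\delta$. The product formula for $f_\un$ and the ultrametric inequality give $|f_\un|_{B(\alpha_i,\delta)}\le \prod_k \max\{|\alpha_i-\alpha_k|_v,\delta\}^{n_k}$, and Cauchy's inequality combined with $|k!|_v\le p^2 k\delta^k$ from \eqref{estultra:eq:delta} produces $|f_\un^{(k)}(\alpha_i)|_v\le p^2 N|f_\un|_{B(\alpha_i,\delta)}$ uniformly in $k\le N$. For \eqref{estultra:lemme1:eq1bis}, choosing $r=1$ instead gives $|f_\un|_{B(\alpha_i,1)}\le 1$ under the given hypothesis, and the bound $|k!|_v\le|n_i!|_v$ for $k\ge n_i$ (valid since $k!/n_i!$ is a positive integer) closes the estimate.

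The heart of the lemma is \eqref{estultra:lemme1:eq2}. Writing $w=\alpha_j-\alpha_i$ so that $|w|_v=\rho<\delta$ ensures convergence of $e^w$ in $K_v$. Combining the Taylor expansion of $P_\un$ around $\alpha_i$ with the power series for $e^w$ and interchanging the two sums yields the key identity
\[
P_\un(\alpha_i)e^{\alpha_j-\alpha_i}-P_\un(\alpha_j)=\sum_{m=n_i}^N f_\un^{(m)}(\alpha_i)\sum_{k=m+1}^\infty \frac{w^k}{k!}.
\]
Since $\rho/\delta<1$ and $|k!|_v\ge\delta^k$, the inner tail has $v$-norm at most $(\rho/\delta)^{m+1}$, achieved at $k=m+1$. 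Cauchy's inequality applied on $B(\alpha_i,\rho)$ yields $|f_\un^{(m)}(\alpha_i)|_v\le p^2 N(\delta/\rho)^m|f_\un|_{B(\alpha_i,\rho)}$, so each term of the outer sum is bounded by $p^2 N(\rho/\delta)|f_\un|_{B(\alpha_i,\rho)}$. The required right-hand side then appears thanks to the ultrametric identity $\max\{\rho,|\alpha_i-\alpha_k|_v\}=\max\{|\alpha_i-\alpha_k|_v,|\alpha_j-\alpha_k|_v\}$, valid for every $k$.

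The main obstacle is the careful bookkeeping in \eqref{estultra:lemme1:eq2}: the gain factor $(\rho/\delta)$ coming from the exponential tail must balance precisely against the losses $(\delta/\rho)^m$ from Cauchy and the factor $\delta^m$ from \eqref{estultra:eq:delta}, so that a single factor $(\rho/\delta)$ survives independently of $m$. Once the summation identity and the choice of radius $\rho$ are in place, everything reduces to routine non-archimedean manipulations.
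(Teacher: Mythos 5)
Your proof is correct. For \eqref{estultra:lemme1:eq1} and \eqref{estultra:lemme1:eq1bis} you follow exactly the paper's route: Cauchy's inequalities on $B(\alpha_i,\delta)$, respectively $B(\alpha_i,1)$, combined with \eqref{estultra:eq:delta} and the vanishing of $f_\un^{(k)}(\alpha_i)$ for $k<n_i$. For \eqref{estultra:lemme1:eq2} your organization differs from the paper's: the paper introduces the auxiliary analytic function $g(z)=P_\un(z)e^{\alpha_j-z}-P_\un(\alpha_j)$ on the disk $B(\alpha_j,\rho)$, uses $g'(z)=-f_\un(z)e^{\alpha_j-z}$ together with Leibniz's rule to get $|g^{(k)}(\alpha_j)|_p\le\max_{0\le\ell<k}|f_\un^{(\ell)}(\alpha_j)|_p$, and then bounds $|g(\alpha_i)|_p$ by $|g|_B$ via the sup-norm formula; you instead write out the explicit double-sum identity $P_\un(\alpha_i)e^{w}-P_\un(\alpha_j)=\sum_{m}f_\un^{(m)}(\alpha_i)\sum_{k>m}w^k/k!$, which is the integrated form of the same differential relation, and bound it term by term. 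Both arguments rest on the same two estimates (Cauchy on a disk of radius $\rho$ and $\delta^k\le|k!|_p\le p^2k\delta^k$) and both produce the same net gain of a single factor $\rho/\delta$; yours trades the function-theoretic packaging for a slightly longer but entirely elementary bookkeeping with the tail of the exponential series. Your final identification $\max\{\rho,|\alpha_i-\alpha_k|_v\}=\max\{|\alpha_i-\alpha_k|_v,|\alpha_j-\alpha_k|_v\}$ is the same ultrametric observation the paper uses when computing $|f_\un|_{B(\alpha_j,\rho)}$, since $B(\alpha_i,\rho)=B(\alpha_j,\rho)$. I see no gaps.
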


\begin{proof}
To simplify, we may assume that $K\subset\bC_p$
and that $|\alpha|_v=|\alpha|_p$ for each $\alpha\in K$.  Then,
the polynomial $f_\un(z)\in K[z]$ can be viewed as an analytic
function $f_\un\colon\bC_p\to\bC_p$.  To estimate $|P_\un(\alpha_i)|_v
= |P_\un(\alpha_i)|_p$, we set
\[
 B=B(\alpha_i,\delta)
 \et
 M=|f_\un|_B.
\]
For $k=0,1,\dots,N$, Cauchy's inequalities together with
\eqref{estultra:eq:delta} yield
\[
 |f_\un^{(k)}(\alpha_i)|_p
  \le |k!|_p \delta^{-k} M
  \le p^2 k M
  \le p^2 N M,
\]
thus
\[
 |P_\un(\alpha_i)|_v
  = \left| \sum_{k=0}^N f_\un^{(k)}(\alpha_i)\right|_p
  \le p^2 N M.
\]
This proves \eqref{estultra:lemme1:eq1} since
\[
 M\le \prod_{k=1}^s \sup\{|z-\alpha_k|_p\,;\,z\in B\}^{n_k}
    = \prod_{k=1}^s \max\{|\alpha_i-\alpha_k|_v,\,\delta\}^{n_k}.
\]
If $|\alpha_i-\alpha_k|_v\le 1$ for each $k$,
a similar computation yields $|f_\un|_B\le 1$ with
$B=B(\alpha_i,1)$.  Then Cauchy's inequalities give
$|f_\un^{(k)}(\alpha_i)|_p\le |k!|_p$ for each $k\in\bN$.
Since we have $f_\un^{(k)}(\alpha_i)=0$ for $k=0,\dots,n_i-1$,
we deduce that $|f_\un^{(k)}(\alpha_i)|_v\le |n_i!|_v$
for each $k\in\bN$ and the upper bound
\eqref{estultra:lemme1:eq1bis} follows.

Suppose now that $0< \rho = |\alpha_i-\alpha_j|_p < \delta$.  To
prove \eqref{estultra:lemme1:eq2}, we use instead
\[
 B = B(\alpha_j,\rho)
 \et
 M=|f_\un|_B.
\]
Since $\rho<\delta$, the function $g\colon B\to \bC_p$ given by
\[
 g(z)=P_\un(z)e^{\alpha_j-z}-P_\un(\alpha_j)
 \quad (z\in B)
\]
is analytic with $ g(\alpha_j)=0$ and
\begin{equation}
 \label{estultra:lemme1:eq3}
 g'(z)=-f_\un(z)e^{\alpha_j-z}
 \quad (z\in B).
\end{equation}
For each integer $\ell=0,1,\dots,N$, we have
\[
 |f_\un^{(\ell)}(\alpha_j)|_p
  \le |\ell!|_p \rho^{-\ell} M
  \le p^2 \ell (\delta/\rho)^\ell M
  \le p^2 N (\delta/\rho)^\ell M.
\]
Since $\disp f_\un^{(\ell)}=0$ for $\ell>N$, this remains valid
for each $\ell\in\bN$.  Then, by \eqref{estultra:lemme1:eq3},
Leibniz formula for the derivative of a product yields,
for each integer $k\ge 1$,
\[
 |g^{(k)}(\alpha_j)|_p
  \le \max_{0\le \ell <k} |f_\un^{(\ell)}(\alpha_j)|_p
  \le p^2 N (\delta/\rho)^{k-1} M.
\]
Since $\alpha_i\in B$ and $g(\alpha_j)=0$, we deduce that
\[
 |P_\un(\alpha_i)e^{\alpha_j-\alpha_i}-P_\un(\alpha_j)|_v
  =|g(\alpha_i)|_p
  \le |g|_B
   =  \sup_{k\ge 1} \left|\frac{g^{(k)}(\alpha_j)}{k!}\right|_p
       \rho^k
  \le p^2 N (\rho/\delta) M.
\]
The upper bound \eqref{estultra:lemme1:eq2} follows since
\[
 M\le \prod_{k=1}^s \sup\{|z-\alpha_k|_p\,;\,z\in B\}^{n_k}
    = \prod_{k=1}^s \max\{|\alpha_i-\alpha_k|_v,\,|\alpha_j-\alpha_k|_v\}^{n_k}.
\qedhere
\]
\end{proof}

\begin{theorem}
Let $\un=(n_1,\dots,n_s)\in\bN_+^s$.  Then
the subset 
$\cC_{\un,v}$
of $K_v^s$ defined in Section \ref{subsec:res}
contains the points
$\ua_{\un-\ue_1},\dots,\ua_{\un-\ue_s}$.
\end{theorem}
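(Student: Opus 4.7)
The plan is to verify, for each $k\in\{1,\ldots,s\}$, that every coordinate $P_{\un-\ue_k}(\alpha_i)$ of $\ua_{\un-\ue_k}$ and every difference $P_{\un-\ue_k}(\alpha_i)e^{\alpha_j-\alpha_i}-P_{\un-\ue_k}(\alpha_j)$ satisfies the corresponding inequality defining $\cC_{\un,v}$. This reduces to applying Lemma~\ref{estultra:lemme1} to the tuple $\un':=\un-\ue_k$, whose entries are $n'_m=n_m$ for $m\neq k$ and $n'_k=n_k-1$, and whose sum is $N'=N-1$. The only algebraic adjustment needed is to convert the products $\prod_m\max\{\cdots\}^{n'_m}$ appearing in the lemma into the products $\prod_m\max\{\cdots\}^{n_m}$ appearing in the definition of $\cC_{\un,v}$; this costs precisely one inverse factor $\max\{\cdots\}^{-1}$ at index $k$, and the proof amounts to showing that this divisor is uniformly controlled.

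When $v\notin E$, we have $|\alpha_i-\alpha_m|_v\in\{0,1\}$ for all $m$, so \eqref{estultra:lemme1:eq1bis} applies and yields $|P_{\un'}(\alpha_i)|_v\le|n'_i!|_v$. If $k=i$, then $n'_i=n_i-1$ and we are done; if $k\neq i$, then $n'_i=n_i$ and $|n_i!|_v=|n_i|_v\,|(n_i-1)!|_v\le|(n_i-1)!|_v$ closes the gap.

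When $v\in E$ with $v\mid p$, I would handle \eqref{res:eq:ultra1} first. Setting $M_m=\max\{|\alpha_i-\alpha_m|_v,\delta\}$ with $\delta=p^{-1/(p-1)}$, estimate \eqref{estultra:lemme1:eq1} applied to $\un'$ gives $|P_{\un'}(\alpha_i)|_v\le p^2 N'\,M_k^{-1}\prod_m M_m^{n_m}$. Since $M_k\ge\delta$, we have $M_k^{-1}\le p^{1/(p-1)}\le p$, so the bound collapses to $p^3(N-1)\prod_m M_m^{n_m}\le p^3 N\prod_m M_m^{n_m}$. For \eqref{res:eq:ultra2}, with $\rho=|\alpha_i-\alpha_j|_v<\delta$ and $L_m=\max\{|\alpha_i-\alpha_m|_v,|\alpha_j-\alpha_m|_v\}$, estimate \eqref{estultra:lemme1:eq2} applied to $\un'$ gives $(\rho/\delta)\,p^2 N'\,L_k^{-1}\prod_m L_m^{n_m}$. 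The key observation is that the ultrametric inequality $|\alpha_i-\alpha_j|_v\le\max\{|\alpha_i-\alpha_m|_v,|\alpha_m-\alpha_j|_v\}$ forces $L_m\ge\rho$ for every $m$; in particular $L_k\ge\rho$, hence $(\rho/\delta)L_k^{-1}\le 1/\delta\le p$, and the estimate again collapses to $p^3 N\prod_m L_m^{n_m}$.

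There is no serious obstacle here: the proof is essentially bookkeeping built on Lemma~\ref{estultra:lemme1}. The only two ideas worth isolating are that decrementing the $k$-th coordinate of $\un$ amounts to removing the $k$-th factor from the relevant product, and that this removed factor admits a uniform lower bound (by $\delta$ in the coordinate case, by $\rho$ in the difference case via ultrametricity), which is exactly what the extra factor of $p$ hidden in the constant $p^3 N$ of the definition is designed to absorb.
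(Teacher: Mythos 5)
Your proposal is correct and follows essentially the same route as the paper: apply Lemma \ref{estultra:lemme1} to $\un-\ue_k$, and absorb the one missing factor of the product (bounded below by $\delta$ for \eqref{res:eq:ultra1}, by $\rho$ via ultrametricity for \eqref{res:eq:ultra2}, and handled via $|n_i!|_v\le|(n_i-1)!|_v$ when $v\notin E$) into the extra factor $p$ in $p^3N$. This is precisely the paper's argument, with the $v\notin E$ case spelled out slightly more explicitly.
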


\begin{proof}
Fix an integer $\ell\in\{1,\dots,s\}$ and put
$P=P_{\un-\ue_\ell}$.  To show that $\cC_{\un,v}$ contains
the point $\ua_{\un-\ue_\ell}= (P(\alpha_1),\dots,P(\alpha_s))$,
we fix arbitrary $i,j\in\{1,\dots,s\}$.  Since
$\max\{|\alpha_i-\alpha_\ell|_v,\,\delta\}\ge \delta\ge 1/p$,
the inequality \eqref{estultra:lemme1:eq1} of
Lemma \ref{estultra:lemme1} provides
\[
 \big|P(\alpha_i)\big|_v
 \le p^2(N-1) \frac{1}{\delta} \prod_{k=1}^s
     \max\{|\alpha_i-\alpha_k|_v,\,\delta\}^{n_k}
 \le p^3 N \prod_{k=1}^s \max\{|\alpha_i-\alpha_k|_v,\,\delta\}^{n_k}.
\]
If $|\alpha_i-\alpha_k|_v=1$ for each $k=1,\dots,s$ with $k\neq i$,
the inequality \eqref{estultra:lemme1:eq1bis} of the same lemma
also provides
\[
 \big|P(\alpha_i)\big|_v\le |(n_i-1)!|_v.
\]
Finally, if $\rho=|\alpha_j-\alpha_i|_v$ satisfies $0<\rho<\delta$,
then,
since $\max\{|\alpha_i-\alpha_\ell|_v,\,|\alpha_j-\alpha_\ell|_v\}
\ge \rho$, the inequality \eqref{estultra:lemme1:eq2} yields
\begin{align*}
 |P(\alpha_i)e^{\alpha_j-\alpha_i}-P(\alpha_j)|_v
  &\le \frac{\rho}{\delta} p^2  (N-1)\cdot \frac{1}{\rho} \prod_{k=1}^s
      \max\{|\alpha_i-\alpha_k|_v,\,|\alpha_j-\alpha_k|_v\}^{n_k}\\
  &\le  p^3 N \prod_{k=1}^s
      \max\{|\alpha_i-\alpha_k|_v,\,|\alpha_j-\alpha_k|_v\}^{n_k}.
\qedhere
\end{align*}
\end{proof}

%
%

\section{Preliminaries of graph theory}
\label{sec:graphes}

A \emph{graph} $\cG$ is a pair of finite sets $(V,E)$
where $E$ consists of subsets of $V$ with two elements.
The elements of $V$ are called the \emph{vertices}
of $\cG$ and those of $E$ the \emph{edges} of $\cG$
in agreement with the usual graphic representation.

Let $\cG=(V,E)$ be a graph.  An \emph{elementary chain}
in $\cG$ is a sequence $(\alpha_1,\dots,\alpha_m)$ of
$m\ge 2$ distinct elements of $V$ such that
$\{\alpha_i,\alpha_{i+1}\}\in E$ for $i=1,\dots,m-1$.
We say that $\cG$ is \emph{connected} if, for each pair
of distinct elements $\alpha,\beta$ of $V$, there exists
at least one elementary chain $(\alpha_1,\dots,\alpha_m)$
in $\cG$ with $\alpha_1=\alpha$ and $\alpha_m=\beta$.
We say that $\cG$ is a \emph{tree} if there exists exactly
one such chain for each choice of $\alpha,\beta\in V$
with $\alpha\neq \beta$.
When $\cG$ is connected, we have $|V|\le |E|+1$ with equality
if and only if $\cG$ is a tree.

In general, for a graph $\cG=(V,E)$, there exists one and
only one choice of integer $r\ge 1$ and partitions
$V = V_1\cup\cdots\cup V_r$ and $E = E_1\cup\cdots\cup E_r$
of $V$ and $E$ into $r$ disjoint subsets such that
$\cG_i=(V_i,E_i)$ is a connected graph for $i=1,\dots,r$.
We say that $\cG_1,\dots,\cG_r$ are the \emph{connected
components} of $\cG$.  If these are trees, we say that
$\cG$ is a \emph{forest}.
When $\cG$ admits $r$ connected components, we have
$|V|\le |E|+r$ with equality if and only if $\cG$
is a forest.

A \emph{rooted forest} is a triple $\cG=(R,V,E)$
where $(V,E)$ is a forest and where $R$ is a subset
of $V$ containing exactly one vertex from each
connected component of $(V,E)$.
We say that $R$ is the set of \emph{roots}
of $\cG$.  Then, for each $\beta\in V\setminus R$,
there is a unique elementary chain
$(\alpha_1,\dots,\alpha_m)$ with $\alpha_1\in R$ and
$\alpha_m=\beta$.  So we obtain a partial ordering on
$V$ by defining $\alpha<\beta$ if $\beta\notin R\cup\{\alpha\}$
and if the elementary chain which links $\beta$ to an
element of $R$ contains $\alpha$.  In particular, any edge
$\{\alpha,\beta\}\in E$ can be ordered so that $\alpha<\beta$.
The resulting pairs $(\alpha,\beta)$ are called the
\emph{directed edges} of $G$. For fixed $\alpha\in V$,
we say that $D_\cG(\alpha)=\{\beta\in V\,;\, \alpha<\beta\}$
is the set of \emph{descendants} of $\alpha$. The set $S_\cG(\alpha)$
of minimal elements of $D_\cG(\alpha)$ is called the set of
\emph{successors} of $\alpha$.  Note that the pairs
$(\alpha,\beta)\in V\times V$ with $\beta\in S_\cG(\alpha)$
are exactly the directed edges of $G$.
Moreover, any $\beta\in V\setminus R$ is the successor of a
unique $\alpha\in V$.  This allows us to formulate the
following result.

\begin{proposition}
\label{graphes:prop}
Let $\cG=(R,V,E)$ be a rooted tree, let $K$ be a field,
let $(x_\alpha)_{\alpha\in V}$ be a family of indeterminates
over $K$ indexed by $V$, and let $\varphi\colon E\to K$
be a function.  For each $\beta\in V$, we define
\[
 L_\beta
 = \begin{cases}
   x_\beta
     &\text{if $\beta\in R$,}\\
   x_\beta-\varphi(\{\alpha,\beta\})x_\alpha
     &\text{if $\beta\in S_\cG(\alpha)$ with $\alpha\in V$.}
   \end{cases}
\]
Then, upon extending the partial ordering on $V$ to a total
ordering, the matrix of the linear forms $(L_\beta)_{\beta\in V}$
with respect to the basis $(x_\alpha)_{\alpha\in V}$ is lower
triangular with $1$ everywhere on the diagonal.
\end{proposition}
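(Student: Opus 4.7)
The plan is to unwind the definitions and observe that the matrix entries are essentially read off directly from the recipe defining $L_\beta$. Fix a total ordering $\gamma_1 < \gamma_2 < \cdots < \gamma_n$ on $V$ extending the given partial ordering, and let $M=(M_{ij})$ be the matrix such that $L_{\gamma_i}=\sum_j M_{ij} x_{\gamma_j}$. The claim is that $M_{ii}=1$ for every $i$ and $M_{ij}=0$ whenever $j>i$.

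First I would treat the two cases separately. If $\gamma_i\in R$, then $L_{\gamma_i}=x_{\gamma_i}$ by definition, so the only nonzero entry in row $i$ is $M_{ii}=1$, and the lower-triangular condition is trivial for this row. If on the other hand $\gamma_i\in V\setminus R$, then $\gamma_i$ is the successor of a uniquely determined vertex $\alpha\in V$, and $L_{\gamma_i}=x_{\gamma_i}-\varphi(\{\alpha,\gamma_i\})x_\alpha$. Thus in row $i$ only two entries can be nonzero: $M_{ii}=1$, and $M_{ij_0}=-\varphi(\{\alpha,\gamma_i\})$, where $j_0$ is the index for which $\gamma_{j_0}=\alpha$.

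The key remaining point is to argue $j_0<i$, that is, $\alpha$ precedes $\gamma_i$ in the total ordering. This follows because $\gamma_i\in S_\cG(\alpha)\subset D_\cG(\alpha)$ forces $\alpha<\gamma_i$ in the partial ordering, and any total ordering extending that partial ordering therefore places $\alpha$ before $\gamma_i$. Hence $M_{ij}=0$ for all $j>i$, which is precisely the statement that $M$ is lower triangular with $1$'s on the diagonal.

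There really is no obstacle here: the content lies entirely in how the rooted-forest structure was set up in the preceding paragraph, namely that each non-root vertex has a unique predecessor and that the partial ordering is compatible with the parent-to-child direction on edges. The proposition is essentially a reformulation of those two facts, and a short two-case argument as above suffices.
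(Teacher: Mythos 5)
Your proof is correct and is exactly the intended argument: the paper states this proposition without proof, treating it as an immediate consequence of the two facts you cite (uniqueness of the predecessor of each non-root vertex, and compatibility of the partial order with the successor relation). Nothing further is needed.
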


%
%

\section{Paths of steepest ascent}
\label{sec:descente}

In this section, we fix a non-constant monic polynomial
$f(z)\in\bC[z]$, a compact convex subset $\cK$ of $\bC$
containing all the roots of $f$, and a closed disk $D$
of $\bC$ containing $\cK$.  We denote by $N$ the degree
of $f$, and by $R$ the radius of $D$.  The main goal of
this section is to prove the following result.

\begin{theorem}
\label{descente:thm}
Let $\beta\in \cK$. There exists a root $\alpha$ of $f$
and a path $\gamma\colon[0,1]\to\bC$ linking
$\gamma(0)=\alpha$ to $\gamma(1)=\beta$, such that
$f(\gamma(t))=tf(\beta)$ for each $t\in[0,1]$.  The image
of such a path is contained in $\cK$, with length at most
$\pi RN$.
\end{theorem}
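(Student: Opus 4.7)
The plan is to establish the three assertions of the theorem---existence of the path $\gamma$, its containment in $\cK$, and the length bound---in that order, using different tools for each.

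For existence, if $f(\beta)=0$ I take $\gamma$ constant at the root $\beta$. Otherwise, viewing $f\colon\bC\to\bC$ as a degree-$N$ branched covering, I would lift the line segment $L=\{sf(\beta):s\in[0,1]\}$ to a continuous path starting at $\beta$; equivalently, solve backward from $\gamma(1)=\beta$ the ODE
\[
\gamma'(t)=\frac{f(\beta)}{f'(\gamma(t))},
\]
which automatically gives $f(\gamma(t))=tf(\beta)$. The solution may hit critical points of $f$, at which one selects a branch; compactness of $f^{-1}(L)$ together with $|f(\gamma(t))|=t|f(\beta)|\to 0$ as $t\to 0$ force the lift to extend to $t=0$ and to converge to some root $\alpha$ of $f$.

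For the containment in $\cK$, the key observation is that at any point $z\in\bC\setminus\cK$, convexity of $\cK$ yields a unit vector $n$ pointing from $z$ to its nearest point of $\cK$, such that every root $\alpha_j\in\cK$ satisfies $\Re(\bar n(\alpha_j-z))>0$. After a rotation making $n=1$, every $\alpha_j-z$ has positive real part, so every $1/(z-\alpha_j)$ has negative real part. Summing gives $\Re(f'(z)/f(z))<0$, and therefore $\Re(f(z)/f'(z))<0$ as well (since $1/(u+iv)$ has real part of the same sign as $u$). Hence $-f(z)/f'(z)$ has positive real part, i.e., points toward $\cK$, and $-\gamma'(t)=-f(\beta)/f'(\gamma(t))$, which differs from $-f(\gamma)/f'(\gamma)$ by the positive scalar $1/t$, also points toward $\cK$. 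A standard first-exit argument starting from $\gamma(1)=\beta\in\cK$ (possibly after enlarging $\cK$ slightly to put $\beta$ in the interior and letting the enlargement shrink to zero) then forbids $\gamma$ from ever leaving $\cK$.

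For the length bound, the substitution $w=f(\gamma(t))=tf(\beta)$ gives $|\gamma'(t)|\,dt=|dw|/|f'(\gamma(w))|$, hence
\[
\mathrm{length}(\gamma)=\int_L\frac{|dw|}{|f'(\gamma(w))|}.
\]
Since $\gamma$ is one of the (at most $N$) branches of $f^{-1}(L)$, its length is bounded above by the total length of $f^{-1}(L)$, namely $\int_L\sum_{z:f(z)=w}|f'(z)|^{-1}|dw|$, and the desired bound $\pi RN$ would follow from a per-branch estimate of $\pi R$.

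The length bound is the main obstacle. The per-branch bound of $\pi R$ is delicate because each branch is a gradient line of the harmonic function $\log|f|$ (along which $\arg f$ is constant and $|f|$ is monotone) and such a line in a disk of radius $R$ can in principle wind before reaching a root. My approach would be to parametrize the branch by $\sigma=|f|$ so that the length equals $\int_0^{|f(\beta)|}d\sigma/|f'|$ and then exploit the Gauss--Lucas theorem (which places the critical points of $f$ in $\cK\subset D$) together with the constancy of $\arg f$ along the branch to control $|f/f'|$, perhaps via a coarea-type argument on neighboring level sets of $|f|$. Getting the clean factor $\pi R$ per branch (rather than a constant times $R$) is the delicate point.
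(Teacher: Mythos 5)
Your existence argument and your containment argument are both sound and essentially match the paper's. For containment, the paper works with a supporting half-plane of $\cK$ and the interior maximum of $t\mapsto\Re(a\gamma(t)+b)$ rather than a first-exit time, but the engine is the same: outside $\cK$ the quantity $f'(z)/f(z)=\sum_\ell n_\ell/(z-\alpha_\ell)$ has real part of a definite sign after a rotation, and Gauss--Lucas guarantees the relevant point is not a critical point of $f$, so $\gamma$ is differentiable there. (You should say explicitly why the exit point is not a ramification point; the maximum-principle phrasing handles this cleanly.)

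The genuine gap is the length bound, and you have correctly identified it as the obstacle: you do not prove it. Your plan --- bound the length of $\gamma$ by the total length of $f^{-1}(L)$ and then establish a per-branch bound of $\pi R$ --- aims at an intermediate claim that is both stronger than needed and not obviously true: a single steepest-ascent branch can meet a given line up to $N$ times, so nothing forces its length below order $RN$; moreover $f^{-1}(L)$ need not be contained in $D$ (only the distinguished branch $\gamma$, ending at $\beta\in\cK$, was shown to lie in $\cK$). The paper instead applies the Cauchy--Crofton formula directly to the single path $\gamma$:
\[
 L(\gamma)=\frac14\int_0^{2\pi}\int_{-\infty}^{\infty} N(r,\theta)\,dr\,d\theta,
 \qquad
 N(r,\theta)=\card\{t\in[0,1]\,;\,\Re(\gamma(t)e^{-i\theta})=r\},
\]
and the crux is purely algebraic: if $\Re(\gamma(t_0)e^{-i\theta})=r$, write $\gamma(t_0)=(r+iu_0)e^{i\theta}$; since $f(\gamma(t_0))=t_0f(\beta)$ is a real multiple of $f(\beta)$, the real number $u_0$ is a root of the polynomial $g_{r,\theta}(u)=\Im\bigl(f((r+iu)e^{i\theta})/f(\beta)\bigr)$, which has degree at most $N$ and is nonzero for all but finitely many $\theta$. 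As $\gamma$ is injective (because $f\circ\gamma$ is), this gives $N(r,\theta)\le N$, and $N(r,\theta)=0$ unless $r$ lies in an interval of length $2R$ because $\gamma([0,1])\subset\cK\subset D$. Hence $L(\gamma)\le\frac14\cdot2\pi\cdot 2RN=\pi RN$. Without this (or an equivalent) line-intersection counting argument, your proof of the theorem is incomplete.
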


By a \emph{path} we mean here a continuous piecewise
differentiable map $\gamma\colon I\to\bC$ on a closed
subinterval $I$ of $\bR$. For a path $\gamma$ as in the
statement of the theorem, $\gamma(0)$ is necessarily
a root of $f$ and we have
$\max\{|f(\gamma(t))|\,;\,0\le t\le 1\}=|f(\beta)|$.
We will see that, in fact, $\gamma$ is a path of steepest
ascent for $|f|$.

For the proof, we consider the polynomial $f$ as a covering
of Riemann surfaces $f\colon\bC\to\bC$ of degree $N$,
ramified in a finite number of points.  Then any path
$\gamma\colon[0,1]\to\bC$ lifts into $N$ paths
$\gamma_1,\dots,\gamma_N\colon[0,1]\to\bC$ such that
$f^{-1}(\gamma(t))=\{\gamma_1(t),\dots,\gamma_N(t)\}$
for all $t\in[0,1]$.  The latter are not unique in
general, because of ramification, and are constructed by
pasting as in the proof of \cite[Theorem 4.14]{Fo1981}.
For a path $\gamma$ of the form $\gamma(t)=tf(\beta)$
with $f(\beta)\neq 0$, this leads to the following
statement.

\begin{lemma}
\label{descente:lemme:chemin}
Let $\beta\in\bC$ with $f(\beta)\neq 0$, and let $m=m(\beta)\ge 0$
denote the order of the derivative of $f$ at $\beta$.
Then, there exist $\delta\in(0,1)$ and $m+1$ paths
$\gamma_0,\dots,\gamma_m$ from $[0,1]$ to $\bC$ such that
\begin{itemize}
 \item[(i)] $\gamma_0(1)=\cdots=\gamma_m(1)=\beta$,
 \item[(ii)] $f(\gamma_0(t))=\cdots=f(\gamma_m(t))=tf(\beta)$
   for each $t\in[0,1]$,
 \item[(iii)] $\gamma_0(t),\dots,\gamma_m(t)$ are $m+1$
   distinct numbers for each $t\in(1-\delta,1)$.
\end{itemize}
Moreover, for each $j=0,1,\dots,m$ and each $t\in(0,1)$
such that $f'(\gamma_j(t))\neq 0$, the function $\gamma_j$ is
analytic at $t$ and its derivative $\gamma_j'(t)$ heads in
the direction where the norm $|f|$ of $f$ grows fastest.
\end{lemma}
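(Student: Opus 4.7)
The plan is to construct the $m+1$ paths in two stages: first locally near $\beta$ using the order of vanishing of $f-f(\beta)$ there, then extend them all the way back to $t=0$ by lifting the segment $\tau(t)=tf(\beta)$ through the branched covering $f\colon\bC\to\bC$. Because $f'$ vanishes to order exactly $m$ at $\beta$, one has
\[
  f(z)-f(\beta) = c(z-\beta)^{m+1}\bigl(1+h(z-\beta)\bigr),
  \quad c=\frac{f^{(m+1)}(\beta)}{(m+1)!}\neq 0,
\]
for some analytic germ $h$ with $h(0)=0$. I would pick an $(m+1)$-th root of $c\bigl(1+h(z-\beta)\bigr)$ on a small disk about $\beta$ to obtain an analytic $\psi$ with $\psi(\beta)=0$, $\psi'(\beta)\neq 0$, and $f-f(\beta)=\psi^{m+1}$; this $\psi$ is then a local biholomorphism at $\beta$. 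Fixing $m+1$ distinct $(m+1)$-th roots $\omega_0,\dots,\omega_m$ of $-f(\beta)$, the definition $\gamma_j(t)=\psi^{-1}\bigl(\omega_j(1-t)^{1/(m+1)}\bigr)$ on a half-neighborhood $(1-\delta_0,1]$ immediately yields paths that end at $\beta$ at $t=1$, satisfy $f(\gamma_j(t))=tf(\beta)$ throughout, are analytic on $(1-\delta_0,1)$, and are pairwise distinct for $t<1$ by injectivity of $\psi^{-1}$.

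For the extension to $[0,1]$, I would argue by maximality. Let $T_j$ be the infimum of $T\in[0,1-\delta_0]$ for which $\gamma_j$ admits a continuous extension to $[T,1]$ respecting $f(\gamma_j(t))=tf(\beta)$. Since $f$ is a polynomial, $f^{-1}(\tau([0,1]))$ is compact, so $\gamma_j(t)$ stays bounded and has at least one limit point $z_0$ as $t\to T_j^+$, necessarily satisfying $f(z_0)=T_jf(\beta)$. If $f'(z_0)\neq 0$, the implicit function theorem solves $f(z)=tf(\beta)$ analytically for $t$ near $T_j$ with $z(T_j)=z_0$, pushing $\gamma_j$ past $T_j$. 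If instead $f'(z_0)=0$, the same factorization used near $\beta$—now centered at $z_0$—produces finitely many Puiseux branches through $z_0$, from which one picks the branch whose graph matches the already-constructed piece of $\gamma_j$. Either way $T_j$ cannot be a true obstruction, so $T_j=0$; and since $\tau(0)=0$, the endpoint $\gamma_j(0)$ must be a zero of $f$.

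Finally, at any $t_0\in(0,1)$ with $f'(\gamma_j(t_0))\neq 0$, the implicit function theorem gives analyticity of $\gamma_j$ near $t_0$; differentiating $f(\gamma_j(t))=tf(\beta)$ and using $f(\beta)=f(\gamma_j(t_0))/t_0$ yields
\[
  \gamma_j'(t_0) = \frac{f(\beta)}{f'(\gamma_j(t_0))}
  = \frac{1}{t_0}\cdot\frac{f(\gamma_j(t_0))\,\overline{f'(\gamma_j(t_0))}}{|f'(\gamma_j(t_0))|^2}.
\]
A direct Wirtinger computation shows that the Euclidean gradient of $|f|^2$ at a point $w$ with $f(w)f'(w)\neq 0$ corresponds to the complex number $2\,\overline{f'(w)}\,f(w)$, so $\gamma_j'(t_0)$ is a positive real multiple of the direction of steepest ascent of $|f|$ at $\gamma_j(t_0)$. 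The hardest step in this plan is selecting, at each passage through a zero of $f'$, the correct Puiseux branch so that the extended path remains continuous and piecewise differentiable; everything else reduces to routine complex analysis.
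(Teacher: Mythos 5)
Your proposal is correct and follows essentially the same route as the paper: a local normal form $f-f(\beta)=\psi^{m+1}$ at the ramification point $\beta$ (the paper uses the equivalent form $f(z)=f(\beta)(1+h(z)^\ell)$ in Lemma \ref{arch:lemme:regions}) to produce the $m+1$ locally distinct branches, followed by lifting the radial segment $t\mapsto tf(\beta)$ through the branched covering $f$, which the paper delegates to the pasting argument of Forster and you carry out directly via continuation past critical points. Your Wirtinger computation of $\nabla|f|^2$ is an equivalent substitute for the paper's conformality/orthogonality argument for the steepest-ascent property.
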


The last assertion of the lemma means that $\gamma_0,\dots,
\gamma_m$ are paths of steepest ascent for the norm of $f$.
This is true in fact for any path $\gamma$ such that
$f(\gamma(t))=ct$ ($0\le t\le 1$) with a fixed
$c\in\bC\setminus\{0\}$ because the image of the map
$t\mapsto ct$ with $t\ge 0$ is a half line that is
orthogonal to the circles centered at the origin.
As the map $f\colon\bC\to\bC$ is conformal outside of the
ramification points, the preimage $\gamma$ of this curve
is orthogonal to the level curves of $|f|$
outside of these points. We will revisit the construction
of the paths $\gamma_j$ in Lemma \ref{arch:lemme:regions}.

\subsection*{Proof of Theorem \ref{descente:thm}}
If $f(\beta)=0$, the constant path $\gamma(t)=\beta$
for each $t\in [0,1]$ is the only possible choice and it
has the required properties.  Suppose from now on that
$f(\beta)\neq 0$.  Then the preceding lemma provides
a path $\gamma$ of the required type linking $\beta$
to a root of $f$.  Fix such a path.  For the computations,
we denote by $\alpha_1,\dots,\alpha_s$ the distinct
roots of $f$ in $\bC$ and by $n_1,\dots,n_s$ their
respective multiplicities so that
\begin{equation*}
 f(z)=(z-\alpha_1)^{n_1}\cdots(z-\alpha_s)^{n_s}.
 \label{descente:eq:f(z)}
\end{equation*}
We also denote by
$B$ the set of zeros of the derivative $f'$ of $f$.

By Gauss-Lucas theorem the set $B$ is contained in the
convex hull of the roots of $f$, thus $B\subset \cK$.  The fact
that the image of $\gamma$ is contained in $\cK$ admits a similar
proof. Indeed, suppose by contradiction that the image escapes
from $\cK$.  Then, since $\cK$ is convex, there exists a
half-plane containing $\cK$ but not the image of $\gamma$.
More precisely, there exist $a,b\in\bC$ with $|a|=1$ such that
$\Re(az+b)\le 0$ for each $z\in \cK$ and $\Re(a\gamma(t)+b)>0$
for at least one $t\in [0,1]$.  Choose $t_0\in[0,1]$
for which $\Re(a\gamma(t_0)+b)$ is maximal, and set
$z_0=\gamma(t_0)$.  Since $\Re(az_0+b)>0$, we have
$z_0\notin\cK$, thus $t_0\in (0,1)$ and $z_0\notin B$.
Therefore $\gamma$ is differentiable at $t_0$ with
$\Re(a\gamma'(t_0))=0$.  However, by differentiating
both sides of the equality $f(\gamma(t))=tf(\beta)$
at $t=t_0$, we obtain
\[
 a\gamma'(t_0)
  = \frac{af(\beta)}{f'(\gamma(t_0))}
  = \frac{af(z_0)}{t_0f'(z_0)}
  = \left(\sum_{\ell=1}^s\frac{t_0 n_\ell}{a(z_0-\alpha_\ell)}\right)^{-1}.
\]
As $\Re(a(z_0-\alpha_\ell))=\Re(az_0+b)-\Re(a\alpha_\ell+b)
\ge \Re(az_0+b) >0$ for $\ell=1,\dots,s$, we deduce that
$\Re(a\gamma'(t_0))>0$, a contradiction.

To estimate the length $L(\gamma)$ of $\gamma$, we use
the Cauchy-Crofton formula
\begin{align*}
 L(\gamma)
  =\frac{1}{4}
   \int_0^{2\pi}A(\theta)\,d\theta
 \quad
 &\text{where}\quad
 A(\theta)=\int_{-\infty}^\infty N(r,\theta)\,dr\,,\\
 &\text{and}\quad
 N(r,\theta)
  =\card \{t\in[0,1]\,;\, \Re(\gamma(t)e^{-i\theta})=r\}
\end{align*}
(see for example the beautiful proof of \cite{AD1997}).
Fix $r,\theta\in\bR$ and consider the polynomial
\[
 g_{r,\theta}(u)
  = \Im\left(\frac{f((r+iu)e^{i\theta})}{f(\beta)}\right)
  \in \bR[u].
\]
If $t_0\in[0,1]$ satisfies $\Re(\gamma(t_0)e^{-i\theta})=r$,
we may write $\gamma(t_0)=(r+iu_0)e^{i\theta}$ for some
$u_0\in\bR$.  Then we have $f((r+iu_0)e^{i\theta})=t_0f(\beta)$
and consequently $g_{r,\theta}(u_0)=0$.  As $\gamma$ is
injective on $[0,1]$ (because $f\circ\gamma$ is), this
means that $N(r,\theta)$ is at most equal to the number of
real roots of $g_{r,\theta}$.  But, as $f$ has degree $N$,
the polynomial $g_{r,\theta}(u)$ has degree at most $N$ and
its coefficient of $u^N$ is $\Im((ie^{i\theta})^N/f(\beta))$.
Thus, except possibly for the $2N$ values of $\theta\in[0,2\pi)$
for which this coefficient vanishes, we have $g_{r,\theta}\neq 0$
and thus $N(r,\theta)\le N$.

For fixed $\theta$, the set $\{\Re(ze^{-i\theta})\,;\, z\in D\}$
is an interval $I_\theta$ of $\bR$ of length $2R$.   As the image
of $\gamma$ is contained in $\cK\subset D$, we have
$N(r,\theta)=0$ if $r\notin I_\theta$.  We conclude
that $A(\theta) \le 2R N$ except for at most $2N$ values of
$\theta\in [0,2\pi)$, and thus $L(\gamma)\le \pi R N$.

%
%

\section{A tree of paths between complex roots}
\label{sec:arch}

As in the preceding section, we fix a non-constant
monic polynomial $f(z)\in\bC[z]$.  We denote by $N$ its
degree, by $A=\{\alpha_1,\dots,\alpha_s\}$ the set of its
complex roots, by $\cK$ the convex hull of $A$, and by
$R$ the radius of a closed disk $D$ containing $A$.
We also denote by $B=\{\beta_1,\dots,\beta_{p}\}$ the
set of roots of $f'(z)$ which are not roots of $f(z)$,
that is the set of zeros of the logarithmic derivative
$f'(z)/f(z)$.  Then we may write
\begin{align}
 f(z)&=(z-\alpha_1)^{n_1}\cdots(z-\alpha_s)^{n_s},
 \label{arch:eq:f(t)}\\
 f'(z)&=N(z-\alpha_1)^{n_1-1}\cdots(z-\alpha_s)^{n_s-1}
        (z-\beta_1)^{m_1}\cdots(z-\beta_p)^{m_p},
 \label{arch:eq:f'(t)}
\end{align}
for integers $n_1,\dots,n_s\ge 1$ with sum $N$,
and integers $m_1,\dots,m_p\ge 1$ with sum $s-1$.

For each $\beta\in\bC$, we denote by $m(\beta)$ the order
of $f'(z)$ at $\beta$.  With this
notation, we have $m_j=m(\beta_j)$ for $j=1,\dots,p$.
The goal of this section is to prove the following
result.

\begin{theorem}
\label{arch:thm:graphe}
There exists a tree $\cG$ with the following properties:
\begin{itemize}
 \item[(i)] Its set of vertices is $A$.
 \item[(ii)] It has $s-1$ edges, each one indexed by an
      element of $B$.
 \item[(iii)] For each $\beta\in B$, there are exactly
      $m(\beta)$ edges indexed by $\beta$.
 \item[(iv)] If $\{\alpha,\alpha'\}$ is an edge of\/ $G$ indexed
      by $\beta$, there exists a path $\gamma\colon [0,1]\to \bC$
      of length at most $2\pi R N$, contained in $\cK$, linking
      $\gamma(0)=\alpha$ to $\gamma(1)=\alpha'$, such that
     \[
      \gamma(1/2)=\beta
      \et
      \max_{0\le t\le 1} |f(\gamma(t)| = |f(\beta)|.
     \]
\end{itemize}
\end{theorem}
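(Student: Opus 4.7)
My plan is to build an auxiliary graph $H$ on the enlarged vertex set $A\cup B$, prove that $H$ is a tree, and then extract $\cG$ from $H$ by replacing each vertex of $B$ by a spanning tree on its $A$-neighbors. For each $\beta\in B$, Lemma~\ref{descente:lemme:chemin} supplies $m(\beta)+1$ paths $\gamma_{\beta,0},\dots,\gamma_{\beta,m(\beta)}\colon[0,1]\to\bC$ with $\gamma_{\beta,j}(1)=\beta$, $f(\gamma_{\beta,j}(t))=tf(\beta)$ and pairwise distinct germs at $t=1$; in particular each $\gamma_{\beta,j}(0)$ is a root of $f$, and by Theorem~\ref{descente:thm} each such path lies in $\cK$ and has length at most $\pi R N$. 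I define $H=(A\cup B,E_H)$ by joining, for every such pair $(\beta,j)$, the vertex $\beta$ to the vertex $\gamma_{\beta,j}(0)\in A$. Then
\[
|V(H)|=s+p\qquad\text{and}\qquad |E_H|=\sum_{\beta\in B}\bigl(m(\beta)+1\bigr)=(s-1)+p,
\]
so $|V(H)|-|E_H|=1$, and $H$ will be a tree as soon as it is connected.

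\textbf{The key step: $H$ is connected.} I would deduce connectedness from the topology of the sublevel sets $U_c=\{z\in\bC:|f(z)|<c\}$. For $c$ small, $U_c$ has $s$ components (one around each root); for $c>\max_\beta|f(\beta)|$ the Morse-theoretic fact that the critical values of $|f|$ on $\bC\setminus A$ are exactly the numbers $|f(\beta)|$ ($\beta\in B$) shows that $U_c$ has a single component. Near any such $\beta$, the expansion $f(z)=f(\beta)+c_{m+1}(z-\beta)^{m+1}+\cdots$ with $m=m(\beta)$ makes $\{|f|=|f(\beta)|\}$ consist locally of $2(m+1)$ rays bounding $2(m+1)$ sectors alternately in $\{|f|<|f(\beta)|\}$ and $\{|f|>|f(\beta)|\}$. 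Hence crossing the critical value reduces the component count of $U_c$ by at most $m(\beta)$, and since the total reduction is $s-1=\sum_\beta m(\beta)$, each $\beta$ must realize its maximal drop $m(\beta)$; equivalently, for $c$ just below $|f(\beta)|$, the $m(\beta)+1$ sectors of $\{|f|<|f(\beta)|\}$ at $\beta$ lie in $m(\beta)+1$ \emph{distinct} components of $U_c$. Since $|f\circ\gamma_{\beta,j}|$ is monotone, each descent path stays inside one of these sector-components all the way down to its root, so the $m(\beta)+1$ paths from $\beta$ reach $m(\beta)+1$ pairwise distinct roots, and the merge operations at each $\beta$ are recorded exactly by the edges of $H$ meeting $\beta$. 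Connectedness of $U_c$ for large $c$ therefore forces $H$ to be connected, and hence a tree.

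\textbf{Extraction of $\cG$ and construction of the paths.} For each $\beta\in B$, I replace the star of edges at $\beta$ in $H$ by any spanning tree on its $m(\beta)+1$ neighbors in $A$ (for instance a spanning path) and label each of the $m(\beta)$ new edges by $\beta$. The resulting graph $\cG$ on $A$ has $\sum_\beta m(\beta)=s-1$ edges, exactly $m(\beta)$ of them labelled by each $\beta$, and is still connected because replacing a degree-$d$ vertex of a tree by a spanning tree on its $d$ neighbors preserves both connectivity and the identity $|V|-|E|=1$; so $\cG$ is a tree, which gives (i)--(iii). For (iv), given an edge $\{\alpha,\alpha'\}$ of $\cG$ labelled by $\beta$, both $\alpha$ and $\alpha'$ are adjacent to $\beta$ in $H$ via descent paths of length at most $\pi RN$; reversing one of them and concatenating the two (with $\beta$ placed at $t=1/2$) yields a path $\gamma\colon[0,1]\to\bC$ of length at most $2\pi RN$, contained in $\cK$, with $\gamma(0)=\alpha$, $\gamma(1/2)=\beta$, $\gamma(1)=\alpha'$, along which $|f\circ\gamma|$ grows monotonically from $0$ to $|f(\beta)|$ on $[0,1/2]$ and decreases back to $0$ on $[1/2,1]$, so $\max_t|f(\gamma(t))|=|f(\beta)|$ and is attained at $t=1/2$. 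The main obstacle is the topological counting in the middle step, which is what simultaneously forces $H$ to be a tree and ensures that the descent paths from any $\beta$ reach $m(\beta)+1$ distinct roots of $f$.
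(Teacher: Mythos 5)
Your proposal is correct, and it reaches the same intermediate object as the paper --- the graph on $A\cup B$ whose edges record which root each descent path $\gamma_{\beta,j}$ reaches --- but you certify that it is a tree from the opposite direction. The paper proves that graph is \emph{acyclic}: it extends each descent path backwards to an ascent path running to $\infty$, applies the Jordan curve theorem to the resulting curves on the Riemann sphere to cut $\bC$ into $m(\beta)+1$ regions separating the descent paths issued from $\beta$, and deduces both the distinctness of the endpoints $\gamma_{\beta,j}(0)$ and the absence of cycles (a putative cycle, minus its vertex of maximal $|f|$, would be a connected set forced into a single region). You instead prove the graph is \emph{connected} by tracking the components of the sublevel sets $U_c=\{|f|<c\}$; in both cases the count $|A\cup B|-|E|=1$ finishes the argument. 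Your route is more conceptual and extracts distinctness and connectedness simultaneously from the single numerical coincidence $s-1=\sum_\beta m(\beta)$, but the middle step leans on facts you assert rather than prove: that no component of $U_c$ is born as $c$ increases (every component of $U_c$ contains a zero of $f$, by the minimum modulus principle), that the component count changes only at the values $|f(\beta)|$, and the correct form of the drop bound when several $\beta$ share a critical value $c_0$ (the per-$\beta$ statement must be replaced by the summed bound $\sum_{|f(\beta)|=c_0}m(\beta)$, which still forces every star to have pairwise distinct endpoints). These are standard and can be made rigorous with the local model $f=f(\beta)(1+h^{m+1})$ you quote, so I see no gap. Two small remarks: containment of the paths in $\cK$ requires $\beta\in\cK$, i.e.\ Gauss--Lucas, which you should invoke explicitly; and your extra freedom of taking an arbitrary spanning tree on the neighbours of each $\beta$ (the paper takes the star centred at $\gamma_{\beta,0}(0)$) is harmless for property (iv), since any two neighbours of $\beta$ are joined by concatenating two descent paths through $\beta$.
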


When all the roots of $f(z)$ are real, we have
$f(z)\in\bR[z]$ and we can give a very simple proof
of the theorem.  To this end, we may assume that the
roots are labelled in increasing order
$\alpha_1<\cdots<\alpha_s$.  Then, in each interval
$[\alpha_j,\alpha_{j+1}]$ with $1\le j\le s-1$, the function
$|f(z)|$ achieves its maximum in a zero $\beta_j$ of $f'(z)$
with $\alpha_j<\beta_j<\alpha_{j+1}$.  Since $B$ has
cardinality $p\le s-1$, this exhausts all the elements of $B$:
we have $p=s-1$ and $m_1=\cdots=m_{s-1}=1$.
We take for $\cG$ the graph with set of vertices $A$, whose edges
are the pairs $\{\alpha_j,\alpha_{j+1}\}$ indexed by
$\beta_j$ for $j=1,\dots,s-1$.  Then $\cG$ is a tree and,
for each $j=1,\dots,s-1$, the piecewise affine linear path
$\gamma_j$ with $\gamma_j(0)=\alpha_j$, $\gamma_j(1/2)=\beta_j$
and $\gamma_j(1)=\alpha_{j+1}$ fulfills the conditions
in (iv). Moreover its length is $\alpha_{j+1}-\alpha_j\le 2R$.

\subsection*{Step 1}
The proof of the general case requires several lemmas.
For each $\beta\in B$, we choose once for all
$m(\beta)+1$ paths $\gamma_{\beta,0},\dots,\gamma_{\beta,m(\beta)}$
with end point $\beta$ as in Lemma \ref{descente:lemme:chemin}.
Then we have $\gamma_{\beta,j}(0)\in A$ for $j=0,\dots,m(\beta)$.
Our goal is to show that these $m(\beta)+1$ points
of $A$ are distinct and that the graph $\cG$ with vertices
$\alpha_1,\dots,\alpha_s$ and edges
$\{\gamma_{\beta,0}(0),\gamma_{\beta,j}(0)\}$
with $\beta\in B$ and $1\le j \le m(\beta)$ satisfies the
properties (i) to (iv) from the theorem.  We start
with property (iv).

\begin{lemma}
\label{arch:lemme:longueur}
Let $\beta\in B$ and $j\in\{1,\dots,m(\beta)\}$.  Then
the path $\tgamma$ from $\gamma_{\beta,0}(0)$ to
$\gamma_{\beta,j}(0)$ given by
\[
 \tgamma(t)
  =\begin{cases}
   \gamma_{\beta,0}(2t) &\text{if $0\le t\le 1/2$,}\\
   \gamma_{\beta,j}(2-2t) &\text{if $1/2\le t\le 1$,}
   \end{cases}
\]
is contained in $\cK$, with length at most $2\pi R N$.
Moreover, it satisfies
\[
 \tgamma(1/2)=\beta
 \et
 \max_{0\le t\le 1} |f(\tgamma(t)| = |f(\beta)|.
\]
\end{lemma}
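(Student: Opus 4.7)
The plan is to assemble the claim from two separate applications of Theorem \ref{descente:thm}, one to each half of $\tgamma$. By the Gauss--Lucas theorem, $B\subset\cK$, so in particular $\beta\in\cK$. Lemma \ref{descente:lemme:chemin} then provides, for each $k\in\{0,\dots,m(\beta)\}$, a path $\gamma_{\beta,k}\colon[0,1]\to\bC$ with $\gamma_{\beta,k}(1)=\beta$ and $f(\gamma_{\beta,k}(t))=tf(\beta)$ on $[0,1]$. Setting $t=0$ shows that $\gamma_{\beta,k}(0)$ is a root of $f$, so each $\gamma_{\beta,k}$ is precisely of the form considered in Theorem \ref{descente:thm}, with endpoint $\beta\in\cK$.

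Applying Theorem \ref{descente:thm} both to $\gamma_{\beta,0}$ and to $\gamma_{\beta,j}$, I conclude that each of these two paths has image contained in $\cK$ and length at most $\pi RN$. The concatenated path $\tgamma$ consists of a reparametrization of $\gamma_{\beta,0}$ on $[0,1/2]$ followed by a time-reversed reparametrization of $\gamma_{\beta,j}$ on $[1/2,1]$; since affine reparametrizations preserve both the image of a path and its length, the image of $\tgamma$ is the union of the two images and lies in $\cK$, while the length of $\tgamma$ is the sum of the two lengths, hence is bounded by $2\pi RN$. The matching at $t=1/2$ gives $\tgamma(1/2)=\gamma_{\beta,0}(1)=\gamma_{\beta,j}(1)=\beta$, and the endpoints $\tgamma(0)=\gamma_{\beta,0}(0)$ and $\tgamma(1)=\gamma_{\beta,j}(0)$ are the required elements of $A$.

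Finally, for the modulus of $f$ along $\tgamma$, the defining relation $f(\gamma_{\beta,k}(s))=sf(\beta)$ yields $|f(\tgamma(t))|=2t|f(\beta)|$ on $[0,1/2]$ and $|f(\tgamma(t))|=(2-2t)|f(\beta)|$ on $[1/2,1]$, so the maximum over $[0,1]$ equals $|f(\beta)|$ and is attained precisely at $t=1/2$. No step in this argument should present a genuine obstacle: the lemma is essentially a packaging of Theorem \ref{descente:thm} into a form suitable for the later construction of the tree $\cG$ in Theorem \ref{arch:thm:graphe}, where one will still need to check that as $\beta$ ranges over $B$ and $j$ over $\{1,\dots,m(\beta)\}$ the endpoints $\gamma_{\beta,0}(0)$ and $\gamma_{\beta,j}(0)$ are distinct and that the resulting graph on $A$ is a tree.
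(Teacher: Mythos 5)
Your proof is correct and follows essentially the same route as the paper: Gauss--Lucas to place $\beta$ in $\cK$, Theorem \ref{descente:thm} applied to each of $\gamma_{\beta,0}$ and $\gamma_{\beta,j}$ for the containment and the length bound $\pi RN$ on each half, and the defining relation $f(\gamma_{\beta,k}(s))=sf(\beta)$ for the maximum of $|f|$ along the concatenation. Your explicit computation $|f(\tgamma(t))|=2t|f(\beta)|$ and $(2-2t)|f(\beta)|$ is a slightly more direct justification of the last claim than the paper's appeal to ``steepest ascent,'' but the substance is identical.
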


\begin{proof}
We have $B\subset \cK$ by Gauss-Lucas theorem.
Then, for each $\beta\in B$, Theorem \ref{descente:thm}
shows that the paths $\gamma_{\beta,0}$ and $\gamma_{\beta,j}$
are contained in $\cK$ with length at most $\pi RN$.  The
conclusion follows since these are path of steepest
ascent for $|f|$.
\end{proof}

\subsection*{Step 2}
We first prove the following result where $S=\bC\cup\{\infty\}$
stands for the Riemann sphere with its usual topology.
Afterwards, we use it to construct a tree $\cH$ on $A\cup B$.

\begin{lemma}
\label{arch:lemme:regions}
Let $\beta\in B$ and let $m=m(\beta)$.  There exist $\delta>0$
and $m+1$ continuous functions $\gamma^+_0,\dots,\gamma^+_m$
from $[1,\infty]$ to $S=\bC\cup\{\infty\}$ such that
\begin{itemize}
 \item[(i)] $\gamma^+_0(1)=\cdots=\gamma^+_m(1)=\beta$,
 \item[(ii)] $f(\gamma^+_0(t))=\cdots=f(\gamma^+_m(t))=tf(\beta)$
   for each $t\in[1,\infty]$,
 \item[(iii)] $\gamma^+_0(t),\dots,\gamma^+_m(t)$ are $m+1$
   distinct numbers for each $t\in(1,1+\delta)$.
\end{itemize}
Then, the curves $\Gamma^+_0=\gamma^+_0([1,\infty]),\dots,
\Gamma^+_m=\gamma^+_m([1,\infty])$ 
meet only at the points $\beta$ and $\infty$ on $S$.
Moreover, their complement
$S\setminus(\Gamma^+_0\cup\cdots\cup\Gamma^+_m)$
is the union of $m+1$ disjoint connected open subsets
$\cR_0,\dots,\cR_m$ of $\bC$ such that
$\gamma_{\beta,j}([0,1))\subseteq \cR_j$ for $j=0,\dots,m$.
\end{lemma}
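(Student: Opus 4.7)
The overall strategy is to construct the $\gamma^+_j$ as continuous lifts of the ray $L=\{sf(\beta):s\in[1,\infty]\}\subset S$ under $f$, and then to use planar topology on $S$ to identify the $m+1$ regions $\cR_j$.

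For the local construction near $\beta$, the fact that $f'$ has a zero of order exactly $m$ at $\beta$ while $f(\beta)\neq 0$ means $f(z)-f(\beta)$ vanishes to order $m+1$ at $\beta$, so there is a local biholomorphism $\phi$ from a neighborhood of $\beta$ onto a disk about $0$ with $\phi(\beta)=0$ and $f(z)-f(\beta)=\phi(z)^{m+1}$. Choosing an $(m+1)$-th root $c$ of $f(\beta)$ and setting $\zeta_j=e^{2\pi ij/(m+1)}$, one can take $\gamma^+_j(t)=\phi^{-1}\bigl(\zeta_j c(t-1)^{1/(m+1)}\bigr)$ on $[1,1+\delta]$ for $\delta>0$ small enough. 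This yields $m+1$ distinct continuous lifts satisfying (i)--(iii) locally, emanating from $\beta$ in the equally spaced directions $\zeta_j c$.

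I would then extend each local branch over $[1,\infty]$. The map $f\colon S\to S$ is a branched covering of degree $N$, and standard path-lifting extends each $\gamma^+_j$ uniquely away from critical points of $f$; since $|f(\gamma^+_j(t))|=t|f(\beta)|\to\infty$ and $f$ is a polynomial, $\gamma^+_j(t)\to\infty$ in $S$, so we can close up at $t=\infty$ continuously. The subtle issue is that $L$ may pass through some critical value $f(\beta^*)$ with $\beta^*\in B\setminus\{\beta\}$, at which a priori two branches could collide. I would handle this by perturbing $L$ to a curve $L_\varepsilon$ from $f(\beta)$ to $\infty$ in $S$ that avoids all critical values of $f$ except $f(\beta)$, so that $f^{-1}(L_\varepsilon)$ consists near $\beta$ of $m+1$ pairwise disjoint smooth arcs terminating at $\infty$, and then take a limit as $\varepsilon\to 0$. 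That the limits $\Gamma^+_j$ meet only at $\beta$ and $\infty$ follows because, if $\gamma^+_j(t_0)=\gamma^+_k(t_0)=z_0$ with $j\neq k$ and $t_0\in(1,\infty)$, then either $f$ is locally injective at $z_0$ (forcing $\gamma^+_j\equiv\gamma^+_k$ locally, contradicting (iii)) or $z_0$ is a critical point, which is excluded in the limit.

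Finally, for the topological part, the union $\Gamma=\Gamma^+_0\cup\cdots\cup\Gamma^+_m$ is a planar graph on the sphere $S$ with $2$ vertices and $m+1$ edges, so Euler's formula $V-E+F=2$ gives $F=m+1$ faces, each a topological disk by the Jordan theorem. Near $\beta$, the $\Gamma^+_j$ go off in the directions $\zeta_j c$ while the paths $\gamma_{\beta,j}$ of Lemma \ref{descente:lemme:chemin}, satisfying $\phi(z)^{m+1}=(t-1)f(\beta)$ with $t<1$, emanate in the directions $\zeta_j e^{i\pi/(m+1)}c$, which strictly interleave the $\zeta_j c$. After relabeling, each $\gamma_{\beta,j}$ thus starts in a distinct sector around $\beta$, hence in a distinct region $\cR_j$. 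Globally $\gamma_{\beta,j}([0,1))$ remains in $\cR_j$, because its $f$-image is the half-open segment $\{tf(\beta):t\in[0,1)\}$, which is disjoint from $L\setminus\{f(\beta)\}=f(\Gamma\setminus\{\beta\})$, so $\gamma_{\beta,j}([0,1))$ cannot cross $\Gamma$. The main obstacle will be making the perturbation-and-limit argument rigorous: one must verify that the $m+1$ arcs of $f^{-1}(L_\varepsilon)$ emanating from $\beta$ are pairwise disjoint, each reach $\infty$, and that their limits as $\varepsilon\to 0$ remain pairwise disjoint away from $\beta$ and $\infty$.
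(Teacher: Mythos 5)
Your local model at $\beta$ and the resulting $m+1$ branches in the directions $\zeta_j c$, the interleaving of the ascending and descending directions, and the observation that $\gamma_{\beta,j}([0,1))$ cannot cross $\Gamma$ because its $f$-image avoids $L\setminus\{f(\beta)\}$ all match the paper's argument. But there is a genuine gap at the step you yourself flag as ``the main obstacle'': proving that $\Gamma^+_j$ and $\Gamma^+_k$ meet only at $\beta$ and $\infty$. Your dichotomy (``either $f$ is locally injective at $z_0$ \dots\ or $z_0$ is a critical point, which is excluded in the limit'') does not dispose of the second case. If the ray $L$ passes through another critical value $f(\beta^*)=t_0f(\beta)$ with $t_0>1$, then in the local model $w\mapsto f(\beta^*)+cw^{\ell'}$ at $\beta^*$ several of the incoming lifts can a priori all run into $\beta^*$ at time $t_0$; this is exactly the configuration the lemma must rule out. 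The perturbation-and-limit scheme cannot rule it out, because pairwise disjointness of arcs is not preserved under uniform limits: the disjoint arcs of $f^{-1}(L_\varepsilon)$ can perfectly well converge, as $\varepsilon\to0$, to curves that collide at $\beta^*$. So the heart of the lemma is left unproved.

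The paper closes this gap with a global Jordan-curve argument that your proposal is missing. Suppose $r\in[1+\delta,\infty]$ is the \emph{first} time with $\gamma^+_j(r)=\gamma^+_k(r)$ and $r<\infty$. Then $\gamma^+_j([1,r])\cup\gamma^+_k([1,r])$ is a compact simple closed curve in $\bC$, so one of its two Jordan regions is bounded. On the other hand, extend each descending path $\gamma_{\beta,i}$ to $t\in[-\infty,1]$ with $f(\gamma^-_i(t))=tf(\beta)$, so that each $\Gamma^-_i$ reaches $\infty$ on $S$. These curves never meet the ascending arcs away from $\beta$ and $\infty$ (the values $tf(\beta)$ with $t\le1$ and $u f(\beta)$ with $u\ge1$ only coincide at $t=u=1$ or at $\infty$), and the interleaved sector structure at $\beta$ places at least one $\Gamma^-_i\setminus\{\beta,\infty\}$ in each of the two Jordan regions. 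Hence both regions are unbounded --- a contradiction, forcing $r=\infty$. This use of the descending family to certify unboundedness of both sides is the idea your write-up needs; once it is in place, your Euler-formula count of the faces and the sector argument for $\gamma_{\beta,j}([0,1))\subseteq\cR_j$ go through.
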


The proof is based on Jordan curve theorem and is illustrated
in Figure \ref{fig1}.

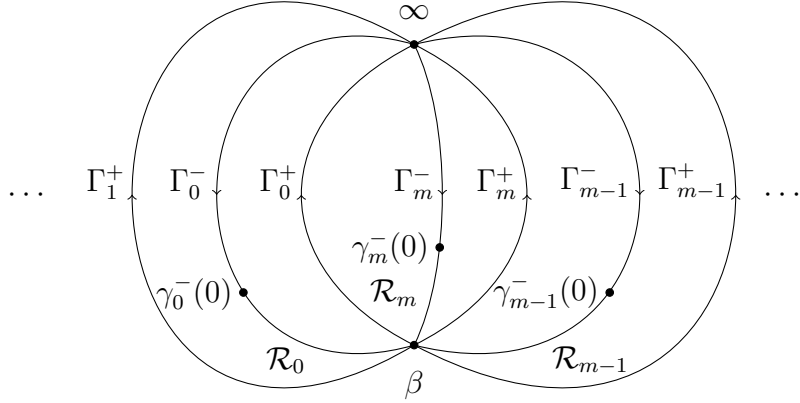
\begin{figure}[h]
 \begin{tikzpicture}
    \clip(-5.5,-3) rectangle (5.5,3);
    \draw[fill] (0, 2) circle (0.05);
    \node[above] at (0,2.2) {$\infty$};
    \draw[->-] (0,-2) .. controls (5.7,-5) and (5.7,5) .. (0,2); 
    \draw[->-] (0,2) .. controls (4,3) and (4,-3) .. (0,-2); 
    \draw[->-] (0,-2) .. controls (2,-1) and (2,1) .. (0,2);
    \draw[->-] (0,2) .. controls (0.5,1) and (0.5,-1) ..  (0,-2);
    \draw[->-] (0,-2) .. controls (-5,-5) and (-5,5) .. (0,2);
    \draw[->-] (0,2) .. controls (-3.5,3) and (-3.5,-3) .. (0,-2);
    \draw[->-] (0,-2) .. controls (-2,-1) and (-2,1) .. (0,2);
    \node[left] at (-4.7,0) {$\cdots$};
    \node[left, above] at (-4.1,-0.2) {$\Gamma^+_1$};
    \node[left, above] at (-3,-0.2) {$\Gamma^-_0$};
    \node[left, above] at (-1.8,-0.2) {$\Gamma^+_0$};
    \node[left, above] at (0,-0.2) {$\Gamma^-_m$};
    \node[left, above] at (1.1,-0.2) {$\Gamma^+_m$};
    \node[left, above] at (2.4,-0.2) {$\Gamma^-_{m-1}$}; 
    \node[left, above] at (3.7,-0.2) {$\Gamma^+_{m-1}$}; 
    \node[right] at (4.5,0) {$\cdots$};
    \draw[fill] (0,-2) circle (0.05);
    \node[below] at (0,-2.2) {$\beta$};
    \node[left] at (0.2,-1.3) {$\cR_m$};
    \node[left] at (3,-2.2) {$\cR_{m-1}$}; 
    \node[left] at (-1.3,-2.2) {$\cR_{0}$};
    \draw[fill] (-2.27, -1.3) circle (0.05);
    \node[left] at (-2.27,-1.3) {$\gamma^-_0(0)$};
    \draw[fill] (0.34, -0.7) circle (0.05);
    \node[left] at (0.34,-0.7) {$\gamma^-_{m}(0)$};
    \draw[fill] (2.6, -1.3) circle (0.05); 
    \node[left] at (2.6,-1.3) {$\gamma^-_{m-1}(0)$}; 
 \end{tikzpicture}
\caption{Illustration for the proof of Lemma \ref{arch:lemme:regions}.}
\label{fig1}
\end{figure}

\begin{proof}
Upon putting $\ell=m+1$, we may write
$f(z)=f(\beta)(1+(z-\beta)^\ell g(z))$
where $g(z)$ is a polynomial with $g(\beta)\neq 0$.  Then, for
sufficiently small $\epsilon>0$, there exist an open
neighborhood $V$ of $\beta$ and a biholomorphic function
$h$ from $V$ to
$B(0,\epsilon)=\{z\in\bC\,;\,|z|<\epsilon\}$ satisfying
$h(\beta)=0$ and
\[
 f(z)=f(\beta)(1+h(z)^\ell)
\]
for each $z\in V$.  Fix such a choice of $\epsilon$, $V$ and $h$,
and set $\delta=\epsilon^\ell$ and $\rho=e^{\pi i/\ell}$.
For $j=0,\dots,m$, we define a continous function
$\gamma^+_j\colon [1,1+\delta)\to V$ by
\[
  \gamma^+_j(t)=h^{-1}\left(\rho^{2j}(t-1)^{1/\ell}\right)
  \quad
  (1\le t<1+\delta).
\]
Then, for fixed $t\in(1,1+\delta)$, the numbers
$z=\gamma^+_0(t),\dots,\gamma^+_m(t)$ are the $\ell$
distinct solutions of $f(z)=tf(\beta)$ with $z\in V$.
In particular, $\gamma^+_0,\dots,\gamma^+_m$ satisfy
Conditions (i) and (iii) of the lemma, as well as (ii)
for each $t\in[1,1+\delta)$.  For $j=0,\dots,m$, we extend
$\gamma^+_j$ to a continuous function
$\gamma^+_j\colon[1,\infty]\to S$ satisfying
$f(\gamma^+_j(t))=tf(\beta)$ for each $t\in[1,\infty]$.

Similarly, for $j=0,\dots,m$, we define a continuous
function $\gamma^-_j\colon (1-\delta,1]\to V$ by
\[
\gamma^-_j(t)=h^{-1}\left(\rho^{2j+1}(1-t)^{1/\ell}\right)
  \quad (1-\delta< t\le 1).
\]
For fixed $t\in(1-\delta,1)$, the numbers
$z=\gamma^-_0(t),\dots,\gamma^-_m(t)$ are the $\ell$
distinct solutions of $f(z)=tf(\beta)$ with $z\in V$, thus
they form a permutation of
$\gamma_{\beta,0}(t),\dots,\gamma_{\beta,m}(t)$.
This permutation being independent of $t$, there is no
loss of generality in assuming that $\gamma^-_j$ is the
restriction of $\gamma_{\beta,j}$ to $(1-\delta,1]$ for
$j=0,\dots,m$.  Then we extend each
$\gamma_{\beta,j}\colon[0,1]\to \bC$ to
a continuous function $\gamma^-_j\colon[-\infty,1]\to S$
such that $f(\gamma^-_j(t))=tf(\beta)$ for each $t\in[-\infty,1]$.

Put $\Gamma^-_j = \gamma^-_j([-\infty,1])$ and
$\Gamma^+_j = \gamma^+_j([1,\infty])$ for $j=0,\dots,m$,
and fix $j,k\in\{0,1,\dots,m\}$. The curves
$\Gamma^-_j$ and $\Gamma^+_k$ meet only at the points
$\beta$ and $\infty$ because if $\gamma^-_j(t)=\gamma^+_k(u)$
for some $t\in[-\infty,1]$ and $u\in[1,\infty]$, then
$tf(\beta) = uf(\beta)$, thus $t=u=1$ or $-t=u=\infty$.
Suppose now that $j<k$.  As the curves $\Gamma^+_j$ and
$\Gamma^+_k$ meet at infinity, there exists a smallest
$r\in[1+\delta,\infty]$ such that $\gamma^+_j(r)=\gamma^+_k(r)$.
For this choice of $r$, the union
$\gamma^+_j([1,r]) \cup \gamma^+_k([1,r])$ is a simple
closed curve $\Gamma$.  By Jordan curve theorem, its
complement in $S$ is thus the union of two 
connected open sets $\cR$ and $\cR'$ with 
boundary $\Gamma$.  On the other hand, we have
\[
 V\cap\Gamma
  = \gamma^+_j([1,1+\delta))\cup\gamma^+_k([1,1+\delta))
  = h^{-1}(P)
 \quad
 \text{where}
 \quad
 P=[0,\epsilon)\rho^{2j}\cup[0,\epsilon)\rho^{2k}.
\]
Moreover, $B(0,\epsilon)\setminus P$ is the union of two
disjoint connected open sets $\cU$ and $\cU'$ (open sectors
of the disk $B(0,\epsilon)$),  where $\cU$ contains the rays
$(0,\epsilon)\rho^{2i+1}$ with $j\le i<k$ and $\cU'$
those with $0\le i<j$ or $k\le i\le m$.
As $h\colon V\to B(0,\epsilon)$ is a homeomorphism,
$h^{-1}(\cU)$ and $h^{-1}(\cU')$ are disjoint connected open subsets
of $S$ whose union is $V\setminus\Gamma$.  We may assume
that $h^{-1}(\cU)\subset \cR$ and $h^{-1}(\cU')\subset \cR'$.  Then,
we obtain
\[
 \gamma^-_i((1-\delta,1))=h^{-1}((0,\epsilon)\rho^{2i+1})
 \subseteq
 \begin{cases}
   \cR  &\text{if $j\le i<k$,}\\
   \cR' &\text{else.}
 \end{cases}
\]
However, $\cR$ and $\cR'$ share the same boundary, contained
in $\Gamma^+_j \cup \Gamma^+_k$.  Thus none of the sets
$\Gamma^-_i \setminus \{\beta,\infty\}=\gamma_i^-((-\infty,1))$
meet this boundary.  As these are connected curves,
we conclude that $\Gamma^-_i \setminus \{\beta,\infty\}$ is
contained in $\cR$ if $j\le i<k$ and in $\cR'$ otherwise.
In particular, none of the open subsets $\cR$ and $\cR'$
of $\bC$ is bounded and consequently we must have $r=\infty$.
This means that $\Gamma^+_j$ and $\Gamma^+_k$ meet only at
$\beta$ and $\infty$.

With the above notation, we define $\cR_j=\cR$ for the choice
of $j\in\{0,\dots,m-1\}$ and $k=j+1$.  We also define
$\cR_m=\cR'$ for the choice of $j=0$ and $k=m$.  These are
connected open subsets of $\bC$ with $\gamma_{\beta,j}([0,1))
\subset \Gamma^-_j \setminus \{\beta,\infty\} \subset \cR_j$ for
$j=0,\dots,m$.  It remains to show that $\cR_0,\dots,\cR_m$
pairwise disjoint.  To this end, we first note that if
$j\neq k$, then $\cR_j \not\subseteq \cR_k$ since
$\Gamma^-_j\setminus\{\beta,\infty\}$ is contained in $\cR_j$
but not in $\cR_k$.  So if $\cR_j$ and $\cR_k$ intersect,
then $\cR_j$ meets the boundary of $\cR_k$.  Then $\cR_j$
contains at least one point
of $\Gamma^+_i\setminus\{\beta,\infty\}$
for some $i\in\{0,1,\dots,m\}$.  However, by the choice
of $\cR_j$, we have $\gamma^+_i(t)\notin\cR_j$ for each
$t\in(1,1+\delta)$.  Thus the curve
$\Gamma^+_i\setminus\{\beta,\infty\}$ is not fully contained
in $\cR_j$ and, as it is a connected set, it meets the
boundary of $\cR_j$ without being fully contained in it.
This is impossible because that boundary is the union of
two curves among $\Gamma^+_0,\dots,\Gamma^+_m$.
\end{proof}

\begin{lemma}
\label{arch:graphe:H}
For each $\beta\in B$, the $m(\beta)+1$ points
$\gamma_{\beta,j}(0)\in A$ with $0\le j\le m(\beta)$ are
distinct.  Moreover, let $\cH$ be the graph
whose set of vertices is $A\cup B$ and whose edges are
the pairs $\{\beta,\gamma_{\beta,j}(0)\}$
with $\beta\in B$ and $0\le j \le m(\beta)$.  Then $\cH$
is a tree.
\end{lemma}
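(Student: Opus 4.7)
The plan is to deduce distinctness of the endpoints directly from Lemma \ref{arch:lemme:regions} and then to establish the tree property by combining acyclicity with a vertex-edge count. For each $\beta\in B$, Lemma \ref{arch:lemme:regions} places each path $\gamma_{\beta,j}([0,1))$ in a distinct connected component $\cR_{\beta,j}$ of $S\setminus\bigcup_l\Gamma^+_{\beta,l}$, so in particular the starting points $\gamma_{\beta,j}(0)\in\cR_{\beta,j}$ are pairwise distinct, giving the first assertion. Since $A\cap B=\emptyset$, this also shows that the edges $\{\beta,\gamma_{\beta,j}(0)\}$ for $0\le j\le m(\beta)$ are distinct from each other and from those attached to other $\beta'\in B$, so $\cH$ has $|V(\cH)|=s+p$ vertices and $|E(\cH)|=\sum_{\beta\in B}(m(\beta)+1)=(s-1)+p=s+p-1$ edges. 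By the preliminaries of Section \ref{sec:graphes}, a forest with $|V|=|E|+1$ has a single connected component, so it will suffice to prove that $\cH$ is acyclic.

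Suppose for contradiction that $\cH$ contains a cycle. Since every edge of $\cH$ joins a vertex of $A$ to a vertex of $B$, the cycle must alternate as $\alpha_1,\beta_1,\alpha_2,\beta_2,\ldots,\alpha_k,\beta_k,\alpha_1$ with $k\ge 2$. The two edges of the cycle incident to $\beta_i$ select distinct indices $j_i\neq j_i'$ with $\gamma_{\beta_i,j_i}(0)=\alpha_i$ and $\gamma_{\beta_i,j_i'}(0)=\alpha_{i+1}$ (indices modulo $k$). Now choose $i_0$ so that $|f(\beta_{i_0})|$ is maximal among the $|f(\beta_i)|$, and let $\Theta\subset\bC$ denote the continuous curve obtained by concatenating the remaining pieces of the cycle: $\Theta$ links $\alpha_{i_0+1}$ to $\alpha_{i_0}$ through the vertices $\beta_{i_0+1},\alpha_{i_0+2},\ldots,\beta_k,\ldots,\alpha_{i_0}$, and it avoids $\beta_{i_0}$.

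The heart of the argument, and what I expect to be the main technical obstacle, is to show that $\Theta$ is disjoint from the union $\Gamma^+_{\beta_{i_0}}=\bigcup_{l=0}^{m(\beta_{i_0})}\Gamma^+_{\beta_{i_0},l}$ provided by Lemma \ref{arch:lemme:regions} applied at $\beta_{i_0}$. Any $z\in\Theta$ lies on some $\gamma_{\beta_i,j}$ with $i\neq i_0$ and hence satisfies $f(z)=tf(\beta_i)$ for some $t\in[0,1]$, while any point of $\Gamma^+_{\beta_{i_0},l}$ satisfies $f(z)=sf(\beta_{i_0})$ for some $s\ge 1$. If both held, then $tf(\beta_i)=sf(\beta_{i_0})$ with $s/t\ge 1$, which yields $|f(\beta_i)|\ge|f(\beta_{i_0})|$; maximality of $|f(\beta_{i_0})|$ would force $t=s=1$, so that $z=\gamma_{\beta_i,j}(1)=\beta_i$ would equal $\beta_{i_0}$, contradicting that $\beta_i$ and $\beta_{i_0}$ are distinct vertices of the cycle. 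Hence $\Theta$ is a connected subset of $S\setminus\Gamma^+_{\beta_{i_0}}$ and must lie in a single component $\cR_{\beta_{i_0},l^*}$. But Lemma \ref{arch:lemme:regions} places its endpoints $\alpha_{i_0}=\gamma_{\beta_{i_0},j_{i_0}}(0)$ and $\alpha_{i_0+1}=\gamma_{\beta_{i_0},j_{i_0}'}(0)$ in the distinct regions $\cR_{\beta_{i_0},j_{i_0}}$ and $\cR_{\beta_{i_0},j_{i_0}'}$, the desired contradiction; this proves acyclicity and completes the proof.
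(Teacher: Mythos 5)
Your proof is correct and follows essentially the same route as the paper's: distinctness of the starting points via the disjoint regions of Lemma \ref{arch:lemme:regions}, acyclicity by selecting the vertex of $B$ on the putative cycle where $|f|$ is maximal and showing the rest of the cycle would have to lie in a single region while touching two distinct ones, and finally the count $|V|=|E|+1$ to upgrade the forest to a tree. The only (immaterial) difference is that the paper deletes just the single point $a_1$ from the cycle and derives $\gamma_1=\gamma_k$, whereas you delete the two edges incident to $\beta_{i_0}$ and compare the regions containing the two endpoints of the remaining arc.
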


\begin{proof}
The first assertion is a direct consequence of the preceding
lemma because, for $\beta\in B$ and $m=m(\beta)$, this lemma
provides disjoint connected open sets $\cR_0,\dots,\cR_m$
such that $\gamma_{\beta,j}(0)\in \cR_j$ for $j=0,\dots,m$.

Suppose that $\cH$ is not a forest.  Then $\cH$
contains a simple cycle: an elementary chain
$(a_1,\dots,a_k)$ with $k\ge 3$ such that $\{a_k,a_1\}$
is an edge of $\cH$.  Then, $k$ is an even integer and
the $a_i$'s belong alternatively to $A$ or $B$
according to the parity of $i$.  By permuting
cyclicly the elements of this chain if necessary, we may
assume that $a_1\in B$ and that $|f(a_1)|\ge |f(a_i)|$ for
$i=1,\dots,k$. Let $m=m(a_1)$ and let $\cR_0,\dots,\cR_m$
be the connected open sets associated to the point
$a_1\in B$ by Lemma \ref{arch:lemme:regions}.  For each
point $z\neq a_1$ outside of these open sets, we have
$f(z)=tf(a_1)$ for a real number $t>1$, thus $|f(z)|>|f(a_1)|$.
We set $a_{k+1}=a_1$ and, for $i=1,\dots,k$, we denote by
$\gamma_i$ the path of the form $\gamma_{\beta,j}$ which
links $a_i$ and $a_{i+1}$.  For each $t\in[0,1]$, we have
$f(\gamma_i(t))=t f(a_i)$ if $i$ is odd and
$f(\gamma_i(t)) = t f(a_{i+1})$ if $i$ is even.  In both
cases, this yields $|f(\gamma_i(t))| \le |f(a_1)|$, with
the strict inequality if $t\neq 1$.
As $a_1,\dots,a_k$ are distinct and as $\gamma_i(1) \in
\{a_3,\dots,a_{k-1}\}$ when $2\le i\le k-1$, we deduce
that the curve
\[
 \Gamma=\gamma_1([0,1))\cup
        \gamma_2([0,1])\cup\cdots\cup\gamma_{k-1}([0,1])
        \cup\gamma_k([0,1))
\]
is contained in $\cR_0\cup\cdots\cup\cR_m$.  As this is a
connected subset of $\bC$, it is therefore fully
contained in $\cR_j$ for some $j$.  Since $\gamma_1(1)=
\gamma_k(1)=a_1$, this implies that $\gamma_1=\gamma_k$,
thus $a_2=\gamma_1(0)=\gamma_k(0)=a_k$, which is impossible.

So $\cH$ is a forest.  Therefore, its number of connected
components is equal to its number of vertices minus its number of
edges, that is
\[
 |A\cup B| - \sum_{\beta\in B} (m(\beta)+1)
  = s - \sum_{\beta\in B} m(\beta) =1.
\]
Thus $\cH$ is in fact a tree.
\end{proof}

\subsection*{Step 4. Proof of Theorem \ref{arch:thm:graphe}}
Let $\cG$ be the graph whose set of vertices is $A$ and whose
edges are the pairs
\begin{equation}
 \label{etape5:eq}
 \{\gamma_{\beta,0}(0),\gamma_{\beta,j}(0)\}
 \quad
 \big(\beta\in B, \ 1\le j\le m(\beta)\big).
\end{equation}
Since $\cH$ is connected, so is the graph $\cG$. Since
$\cG$ possesses $s=|A|$ vertices and since
$\sum_{\beta\in B} m(\beta)=s-1$, we deduce that
the $s-1$ edges \eqref{etape5:eq} are distinct and
that $\cG$ is a tree. In particular, for each $\beta\in B$,
there are exactly $m(\beta)$ edges of $\cG$ indexed
by $\beta$ and Lemma \ref{arch:lemme:longueur} shows that,
for each of them, there exists a path satisfying
Condition (iv) of the theorem.

%
%

\section{Computation of a semi-resultant}
\label{sec:semi-resultant}

We first prove the following formula.

\begin{proposition}
\label{semi-resultant:prop}
With the notation of the preceding section, we have
\[
 N^N \prod_{j=1}^p f(\beta_j)^{m_j}
  = \prod_{i=1}^s
      \Big( n_i^{n_i}\prod_{k\neq i}(\alpha_i-\alpha_k)^{n_k}\Big).
\]
\end{proposition}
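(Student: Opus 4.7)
The plan is to produce, at each root $\alpha_i$, one scalar identity by comparing two expressions for $f'(z)$, and then to multiply these identities (with well-chosen exponents) so as to extract $\prod_j f(\beta_j)^{m_j}$. Logarithmic differentiation of $f(z)=\prod_i(z-\alpha_i)^{n_i}$ gives
\[
 f'(z) = \prod_{i=1}^s(z-\alpha_i)^{n_i-1}\cdot\sum_{i=1}^s n_i\prod_{k\neq i}(z-\alpha_k),
\]
while by \eqref{arch:eq:f'(t)} we also have $f'(z)=N\prod_i(z-\alpha_i)^{n_i-1}\prod_j(z-\beta_j)^{m_j}$. Cancelling the common factor $\prod_i(z-\alpha_i)^{n_i-1}$ yields the polynomial identity
\[
 \sum_{i=1}^s n_i\prod_{k\neq i}(z-\alpha_k) \;=\; N\prod_{j=1}^p (z-\beta_j)^{m_j},
\]
and specialising at $z=\alpha_i$ (so that only the $i$-th summand on the left survives) produces, for every $i=1,\dots,s$,
\[
 n_i\prod_{k\neq i}(\alpha_i-\alpha_k) \;=\; N\prod_{j=1}^p (\alpha_i-\beta_j)^{m_j}.
\]

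Next, I raise the $i$-th identity to the power $n_i$ and take the product over $i=1,\dots,s$. The left-hand side becomes $\prod_i n_i^{n_i}\prod_i\prod_{k\neq i}(\alpha_i-\alpha_k)^{n_i}$, and, since $\sum_i n_i=N$, the right-hand side becomes $N^N\prod_{i,j}(\alpha_i-\beta_j)^{n_i m_j}$. Using $f(\beta_j)=\prod_i(\beta_j-\alpha_i)^{n_i}$ together with $\sum_j m_j=s-1$, this double product rewrites as $(-1)^{N(s-1)}\prod_j f(\beta_j)^{m_j}$.

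Finally, to convert $\prod_i\prod_{k\neq i}(\alpha_i-\alpha_k)^{n_i}$ into the expression $\prod_i\prod_{k\neq i}(\alpha_i-\alpha_k)^{n_k}$ appearing on the right-hand side of the claim, I swap the dummy indices $i\leftrightarrow k$ and use $(\alpha_k-\alpha_i)^{n_k}=(-1)^{n_k}(\alpha_i-\alpha_k)^{n_k}$; summing the exponents $n_k$ over all ordered pairs $(i,k)$ with $i\neq k$ gives the sign $(-1)^{(s-1)N}$, which exactly cancels the earlier factor $(-1)^{N(s-1)}$ and delivers the announced identity. The whole computation is otherwise routine; the only genuine obstacle is sign bookkeeping, and the structure above is arranged so that both $(-1)^{N(s-1)}$ factors cancel cleanly.
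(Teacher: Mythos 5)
Your proof is correct and takes essentially the same route as the paper: both cancel the common factor $\prod_i(z-\alpha_i)^{n_i-1}$ from the two expressions for $f'$, specialise at $z=\alpha_i$ to get $n_i\prod_{k\neq i}(\alpha_i-\alpha_k)=N\prod_j(\alpha_i-\beta_j)^{m_j}$, and then raise this to the power $n_i$ and multiply over $i$. The only (cosmetic) difference is that the paper flips the signs inside the scalar identities first, using $\sum_j m_j=s-1$, so that no residual factors of $(-1)^{N(s-1)}$ need to be cancelled at the end.
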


The left hand side of this equality is the semi-resultant of
$f(z)$ and $f'(z)$ in the sense of Chudnovsky \cite{Br1977,Ch1984}.

\begin{proof}
The formula for the derivative of a product applied to the
factorization \eqref{arch:eq:f(t)} of $f(z)$ yields
\[
 f'(z)=(z-\alpha_1)^{n_1-1}\cdots(z-\alpha_s)^{n_s-1}g(z)
\]
where
\[
 g(z)=\sum_{k=1}^s
      n_k(z-\alpha_1)\cdots\widehat{(z-\alpha_k)}\cdots(z-\alpha_s).
\]
By comparison with the factorization \eqref{arch:eq:f'(t)} of
$f'(z)$, we also find that
\[
 g(z)=N(z-\beta_1)^{m_1}\cdots(z-\beta_p)^{m_p}.
\]
Upon evaluating both expressions for $g(z)$ at $z=\alpha_k$, we
obtain
\[
 N\prod_{j=1}^p(\alpha_k-\beta_j)^{m_j}
  = n_k\prod_{i\neq k} (\alpha_k-\alpha_i)
 \quad
 (1\le k\le s).
\]
Since $m_1+\cdots+m_p=s-1$, these equalities may be rewritten as
\[
 N\prod_{j=1}^p(\beta_j-\alpha_k)^{m_j}
  = n_k\prod_{i\neq k} (\alpha_i-\alpha_k)
 \quad
 (1\le k\le s).
\]
As stated, this yields
\begin{align*}
 N^N \prod_{j=1}^p f(\beta_j)^{m_j}
   &= N^N \prod_{j=1}^p
       \Big( \prod_{k=1}^s(\beta_j-\alpha_k)^{n_k} \Big)^{m_j}\\
  &= \prod_{k=1}^s
       \Big( N \prod_{j=1}^p(\beta_j-\alpha_k)^{m_j} \Big)^{n_k}\\
  &= \prod_{k=1}^s
       \Big( n_k \prod_{i\neq k}(\alpha_i-\alpha_k) \Big)^{n_k}
   = \prod_{i=1}^s
      \Big( n_i^{n_i}\prod_{k\neq i}(\alpha_i-\alpha_k)^{n_k}\Big).
\qedhere
\end{align*}
\end{proof}

\begin{cor}
\label{semi-resultant:cor}
With the same notation, we have
\[
 N! \prod_{j=1}^p \big|f(\beta_j)\big|^{m_j}
  \le \prod_{i=1}^s
      \Big( n_i! \prod_{k\neq i}|\alpha_i-\alpha_k|^{n_k}\Big).
\]
\end{cor}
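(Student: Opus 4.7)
The plan is to deduce the inequality directly from Proposition \ref{semi-resultant:prop} by taking absolute values and comparing the combinatorial factors.

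First, taking absolute values in the identity of Proposition \ref{semi-resultant:prop} gives
\[
 N^N \prod_{j=1}^p |f(\beta_j)|^{m_j}
  = \prod_{i=1}^s
      \Big( n_i^{n_i}\prod_{k\neq i}|\alpha_i-\alpha_k|^{n_k}\Big).
\]
Thus the corollary will follow once we establish the purely combinatorial inequality
\[
 \frac{N!}{N^N} \le \prod_{i=1}^s \frac{n_i!}{n_i^{n_i}},
\]
which upon rearrangement is equivalent to
\[
 \binom{N}{n_1,\dots,n_s}\, n_1^{n_1}\cdots n_s^{n_s} \le N^N.
\]

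The main (and only) step is this combinatorial inequality, which I would obtain from the multinomial theorem. Since $N=n_1+\cdots+n_s$ and all $n_i$ are positive, we have
\[
 N^N = (n_1+\cdots+n_s)^N
  = \sum_{k_1+\cdots+k_s=N} \binom{N}{k_1,\dots,k_s}\, n_1^{k_1}\cdots n_s^{k_s}.
\]
Every term in this sum is non-negative, so the total is at least the single term indexed by $(k_1,\dots,k_s)=(n_1,\dots,n_s)$, which yields the desired inequality.

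No step is really difficult here: once Proposition \ref{semi-resultant:prop} is available, the corollary reduces to the well-known bound on multinomial coefficients obtained from a one-line application of the multinomial theorem. I do not anticipate any obstacle.
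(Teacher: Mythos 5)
Your proposal is correct and follows essentially the same route as the paper: the paper also reduces the corollary to the inequality $N!\,\prod_{i=1}^s n_i^{n_i}\le N^N\prod_{i=1}^s n_i!$ and obtains it by isolating the single multinomial term $\frac{N!}{n_1!\cdots n_s!}\bigl(\frac{n_1}{N}\bigr)^{n_1}\cdots\bigl(\frac{n_s}{N}\bigr)^{n_s}$ in the expansion of $\bigl(\frac{n_1}{N}+\cdots+\frac{n_s}{N}\bigr)^N=1$, which is your multinomial-theorem step up to dividing by $N^N$. No gaps.
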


\begin{proof}
Since $N=n_1+\cdots+n_s$, we find
\[
 \frac{N!}{n_1!\cdots n_s!}
   \Big(\frac{n_1}{N}\Big)^{n_1}\cdots\Big(\frac{n_s}{N}\Big)^{n_s}
 \le \Big(\frac{n_1}{N}+\cdots+\frac{n_s}{N}\Big)^N
  = 1.
\]
This yields $N!\prod_{i=1}^s n_i^{n_i} \le N^N \prod_{i=1}^s n_i!$\,,
and the conclusion follows.
\end{proof}

%
%

\section{Volume of the Archimedean components}
\label{sec:volarch}

We are now ready to prove the upper bound estimate
in Theorem \ref{res:principal} (i).  The notation
is as in Section \ref{sec:resultat}.

\begin{theorem}
 \label{volarch:thm}
Let $v$ be an Archimedean place of $K$ and let $\cC_{\un,v}$ be
the convex body of $K_v^s$ defined in Section \ref{subsec:res}
for the choice of an $s$-tuple $\un=(n_1,\dots,n_s)\in\bN_+^s$.
Then, we have
\[
 \mu_v(\cC_{\un,v})^{1/d_v}\le c_v N^{2s-2} |\Delta_\un|_v
 \quad
 \text{with}
 \quad
 c_v = 2^s e^{sR_v} (2\pi R_v^s)^{s-1} |\Delta_\uun|_v^{-1},
\]
where $N=n_1+\cdots+n_s$, $R_v=\max_{1\le i<j\le s}|\alpha_i-\alpha_j|_v$,
and $\uun=(1,\dots,1)$.
\end{theorem}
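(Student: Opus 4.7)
The plan is to combine the tree of paths from Theorem \ref{arch:thm:graphe} with the unipotent reduction of Proposition \ref{graphes:prop} and the semi-resultant estimate of Corollary \ref{semi-resultant:cor}. After choosing an embedding realizing $v$, I identify $K\subset\bC$ with $|\cdot|_v=|\cdot|$, so $d_v\in\{1,2\}$ and in both cases a ball of radius $r$ in $K_v$ has $\mu_v$-volume at most $(2r)^{d_v}$ (since $\sqrt{\pi}\le 2$). In the principal case where every $n_i\ge 2$, I apply Theorem \ref{arch:thm:graphe} to $f:=f_{\un-\uun}$, whose distinct roots are exactly $\alpha_1,\dots,\alpha_s$ with respective multiplicities $n_i-1\ge 1$, and to a closed disk of radius $R_v$ containing $A$. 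This yields a tree $\cG$ on $A$ with $s-1$ edges, each indexed by a critical point $\beta$ of $f$ counted with the correct multiplicity $m(\beta)$.

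For every edge $\{\alpha_i,\alpha_j\}$ of $\cG$ indexed by $\beta$, I estimate the integrals appearing in \eqref{res:eq:C_arch} along the path $\gamma$ furnished by item (iv) of that theorem. Using the factorization $f_{\un-\ue_k}(z)=f(z)\prod_{\ell\ne k}(z-\alpha_\ell)$ together with $|f(\gamma(t))|\le|f(\beta)|$, $|z-\alpha_\ell|\le R_v$ for $z\in\cK$, and $|e^{\alpha_j-z}|\le e^{R_v}$, I obtain
\[
\left|\int_{\alpha_i}^{\alpha_j}f_{\un-\ue_k}(z)\,e^{\alpha_j-z}\,dz\right|\le 2\pi R_v^s e^{R_v}N\,|f(\beta)|,
\]
so that $|x_ie^{\alpha_j-\alpha_i}-x_j|_v\le 2\pi R_v^s e^{R_v}N\,|f(\beta)|$ on $\cC_{\un,v}$. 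Rooting $\cG$ at any vertex and invoking Proposition \ref{graphes:prop}, the $s-1$ tree inequalities together with the single inequality $|x_{\text{root}}|_v\le e^{R_v}(N-1)!$ become bounds on the absolute values of $s$ unipotent-lower-triangular linear forms in $x_1,\dots,x_s$; the corresponding box has volume at most
\[
\mu_v(\cC_{\un,v})^{1/d_v}\le 2^s\,e^{R_v}(N-1)!\,\bigl(2\pi R_v^s e^{R_v}N\bigr)^{s-1}\prod_{\beta\in B}|f(\beta)|^{m(\beta)}.
\]

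Applying Corollary \ref{semi-resultant:cor} to $f$ (with the substitutions $N\leadsto N-s$ and $n_i\leadsto n_i-1$) together with the identity $\prod_i\prod_{k\ne i}|\alpha_i-\alpha_k|^{n_k-1}=|\Delta_\un|_v\,\bigl(|\Delta_\uun|_v\prod_i(n_i-1)!\bigr)^{-1}$ bounds the last product by $|\Delta_\un|_v/\bigl((N-s)!\,|\Delta_\uun|_v\bigr)$. Combining with $(N-1)!/(N-s)!\le N^{s-1}$ yields the claimed bound with $c_v=2^s e^{sR_v}(2\pi R_v^s)^{s-1}|\Delta_\uun|_v^{-1}$. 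The main obstacle is the slight adjustment needed when some $n_i=1$: in that situation $f_{\un-\uun}$ no longer has $\alpha_i$ among its roots and Theorem \ref{arch:thm:graphe} only produces a tree on the strict subset $\{\alpha_j:n_j\ge 2\}$. One must graft each missing $\alpha_i$ onto $\cG$ by adjoining an extra edge --- for instance, a path of steepest ascent of $f_{\un-\ue_i}$ from $\alpha_i$ provided by Theorem \ref{descente:thm}, which terminates at a vertex of $\cG$ --- and check that the additional path length and the perturbed semi-resultant bookkeeping combine without degrading the constant $c_v N^{2s-2}|\Delta_\un|_v$.
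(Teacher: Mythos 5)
Your argument is essentially the paper's own proof: same choice of $f=f_{\un-\uun}$, same tree from Theorem \ref{arch:thm:graphe}, same path-integral bound $2\pi R_v^s e^{R_v}N|f(\beta)|$, same unipotent reduction via Proposition \ref{graphes:prop}, and the same use of Corollary \ref{semi-resultant:cor} with $(N-1)!/(N-s)!\le N^{s-1}$. The degenerate case you flag is handled in the paper exactly by the grafting you suggest (steepest ascent for $f$ itself, ending at each $\alpha_j$ with $n_j=1$), and the bookkeeping closes because $|f(\alpha_j)|=\prod_{k}|\alpha_j-\alpha_k|^{n_k-1}=(n_j-1)!\prod_{k\neq j}|\alpha_j-\alpha_k|^{n_k-1}$, so these extra factors slot into the same product $|\Delta_\un|_v/|\Delta_\uun|_v$ without changing the constant.
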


\begin{proof}
To simplify, we may assume that $K\subset \bC$ and that
$|\alpha|_v=|\alpha|$ for each $\alpha\in K$. By permuting
$\alpha_1,\dots,\alpha_s$ if necessary, we may also assume that
$n_1\ge\cdots\ge n_s$ form a decreasing sequence.  We denote by $D$
the closed disk of radius $R_v$ and center
$(\alpha_1+\dots+\alpha_s)/s$ in $\bC$.  As this disk contains
$\alpha_1,\dots,\alpha_s$, it also contains the convex hull $\cK$
of these points.

Suppose first that $n_1\ge 2$ and let $r$ be the largest index
such that $n_r\ge 2$.  We form the polynomial
\[
 f(z)=\frac{f_\un(z)}{(z-\alpha_1)\cdots(z-\alpha_s)}
     =\prod_{i=1}^r (z-\alpha_j)^{n_i-1}.
\]
The set of its roots is $A=\{\alpha_1,\dots,\alpha_r\}$
and its degree is $N-s$.  Its derivative factors as
\[
 f'(z)=(N-s)(z-\alpha_1)^{n_1-2}\cdots(z-\alpha_r)^{n_r-2}
       (z-\beta_1)^{m_1}\cdots(z-\beta_p)^{m_p}
\]
where $B=\{\beta_1,\dots,\beta_p\}$ is the set of roots
of $f'(z)$ outside of $A$, and where $m_j$ is the multiplicity
of $\beta_j$ for $j=1,\dots,p$.  We choose a tree $G$
as in Theorem \ref{arch:thm:graphe} for this polynomial
$f(z)$.  By construction, the set of vertices of $G$ is $A$.
We now extend $G$ to a graph $\tG$ on
$\{\alpha_1,\dots,\alpha_s\}$ in the following way.
For each $j=r+1,\dots,s$, we choose a path
$\gamma_j\colon[0,1]\to\bC$ such that $\gamma_j(1)=\alpha_j$
and $f(\gamma_j(t))=tf(\alpha_j)$ as in Theorem
\ref{descente:thm}.  Then $\gamma_j(0)$ is a root of $f$,
thus an element of $A$, and we add the edge
$\{\gamma_j(0), \alpha_j\}$ to the graph $G$.
Finally, we choose $\alpha_1\in A$ as a root of
the resulting tree $\tG$.  Then,
$\cC_{\un,v}$ is contained in the set $\cK_v$ of all points
$(x_1,\dots,x_s)\in K_v^s$ satisfying
\[
 |x_1|_v
    \le e^{R_v} (N-1)!
\]
as well as
\[
 |x_ie^{\alpha_j-\alpha_i}-x_j|_v
 \le b_{i,j}
  := \max_{1\le k\le s}
      \left|
       \int_{\alpha_i}^{\alpha_j}
          f_{\un-\ue_k}(z)e^{\alpha_j-z}\dz
      \right|
\]
for each directed edge $(\alpha_i,\alpha_j)$ of $\tG$ with
$\alpha_i<\alpha_j$.  Since $\tG$ is a rooted tree,
Proposition \ref{graphes:prop} shows that the $s$
linear forms defining $\cK_v$ are linearly
independent, with determinant $\pm 1$.
Thus $\cK_v$ is a convex body of $K_v^s$ with
\begin{equation}
 \label{volarch:eq1}
 \mu_v(\cC_{\un,v})^{1/d_v}
   \le \mu_v(\cK_v)^{1/d_v}
   \le 2^s e^{R_v} (N-1)!
       \prod_{(\alpha_i,\alpha_j)\in E} b_{i,j}
\end{equation}
where $E$ stands for the set of directed edges of $\tG$.

For now, fix $(\alpha_i,\alpha_j)\in E$ and
$k\in\{1,\dots,s\}$.  By construction, we have $i\le r$,
that is $\alpha_i\in A$.  If $j\le r$, we also
have $\alpha_j\in A$, and $\{\alpha_i,\alpha_j\}$ is an
edge of $G$.  Then, Theorem \ref{arch:thm:graphe} associates
to this edge a point $\beta\in B$ and
a path $\gamma\colon [0,1]\to\bC$ of length at most
$2\pi R_v N$, contained in $\cK$, joining $\alpha_i$ and $\alpha_j$,
such that
\[
 \max_{0\le t\le 1}|f(\gamma(t))| = |f(\beta)|.
\]
This yields
\begin{align*}
 \left|
       \int_{\alpha_i}^{\alpha_j}
          f_{\un-\ue_k}(z)e^{\alpha_j-z}\dz
 \right|
 &=
 \left|
       \int_{\alpha_i}^{\alpha_j}
          f(z)(z-\alpha_1)\cdots\widehat{(z-\alpha_k)}\cdots(z-\alpha_s)
          e^{\alpha_j-z} \dz
 \right|\\
 &\le 2\pi R_v N |f(\beta)|\,
      \max_{z\in \cK}
        \big|
        (z-\alpha_1)\cdots\widehat{(z-\alpha_k)}\cdots(z-\alpha_s)
          e^{\alpha_j-z}
        \big| \\
 &\le 2\pi R_v^s e^{R_v} N |f(\beta)|\,,
\end{align*}
since $|z-\alpha_\ell|\le R_v$ for any $z\in\cK$ and $\ell=1,\dots,s$.
Finally, if $j>r$, we have $\alpha_i=\gamma_j(0)$ for the path $\gamma_j$
chosen earlier.  By Theorem \ref{descente:thm},
the image of $\gamma_j$ is contained in $\cK$, of length at most
$\pi R_v N\le 2\pi R_v N$.  Thus the same computation as above yields
\[
 \left|
       \int_{\alpha_i}^{\alpha_j}
          f_{\un-\ue_k}(z)e^{\alpha_j-z}\dz
 \right|
 \le 2\pi R_v^s e^{R_v} N |f(\alpha_j)|\,.
\]

Since each $\beta_j$ is associated to $m_j$ edges of $G$
and since $m_1+\cdots+m_p=r-1$, we deduce from \eqref{volarch:eq1}
that
\begin{equation}
 \label{volarch:eq2}
 \mu_v(\cC_{\un,v})^{1/d_v}
   \le 2^s e^{R_v} (N-1)! \big(2\pi R_v^s e^{R_v} N)^{s-1}
       \prod_{j=1}^p |f(\beta_j)|^{m_j}
       \prod_{j=r+1}^s |f(\alpha_j)|\,.
\end{equation}
As $n_k=1$ for $k>r$, Corollary \ref{semi-resultant:cor} gives
\[
 (N-s)! \prod_{j=1}^p \big|f(\beta_j)\big|^{m_j}
  \le \prod_{i=1}^r
      \Big( (n_i-1)! \prod_{k\neq i}|\alpha_i-\alpha_k|^{n_k-1}\Big).
\]
For $i=r+1,\dots,s$, we also find that
\[
 |f(\alpha_i)|=\prod_{k=1}^r|\alpha_i-\alpha_k|^{n_k-1}
   =(n_i-1)! \prod_{k\neq i} |\alpha_i-\alpha_k|^{n_k-1}.
\]
This implies that
\[
 (N-s)! \prod_{j=1}^p |f(\beta_j)|^{m_j}
       \prod_{j=r+1}^s |f(\alpha_j)|
 \le \prod_{i=1}^s
      \Big( (n_i-1)! \prod_{k\neq i}|\alpha_i-\alpha_k|^{n_k-1}\Big)
  = \frac{|\Delta_\un|_v}{|\Delta_\uun|_v}.
\]
Substituting this upper bound in \eqref{volarch:eq2},
we conclude that $\mu_v(\cC_{\un,v})^{1/d_v}\le c_vN^{2s-2}|\Delta_\un|_v$,
as in the statement of the theorem.
\end{proof}

%
%

\section{A forest at ultrametric places}
\label{sec:ultra}

Let $v$ be an ultrametric place of $K$.  In this section we
use the terminology for graphs explained in section
\ref{sec:graphes} to build a rooted forest on an
arbitrary non-empty finite subset of $K_v$.  We start with
a preliminary construction.

\begin{proposition}
 \label{ultra:prop:arbre}
Let $A$ be a non-empty finite subset of $K_v$ and let $\alpha_0\in A$.
There exists a tree $\cG$ rooted in $\alpha_0$ having $A$
as its set of vertices, such that, for each
$\alpha,\beta,\gamma\in A$ with $\beta\in S_\cG(\alpha)$,
we have
\begin{equation}
 \label{ultra:prop:arbre:eq}
 \gamma\in D_\cG(\beta)
    \quad\Longleftrightarrow\quad
 |\alpha-\beta|_v >|\beta-\gamma|_v>0.
\end{equation}
\end{proposition}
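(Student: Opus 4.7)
The plan is to construct $\cG$ by strong induction on $|A|$. The base case $|A|=1$ is trivial: take $\cG$ to be the root $\alpha_0$ with no edges, so condition \eqref{ultra:prop:arbre:eq} holds vacuously.

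For the inductive step, I would introduce on $A\setminus\{\alpha_0\}$ the relation $\beta\sim\gamma$ defined by $|\beta-\gamma|_v<|\alpha_0-\beta|_v$ (together with reflexivity). The ultrametric inequality shows this is equivalent to $|\beta-\gamma|_v<|\alpha_0-\gamma|_v$, forcing $|\alpha_0-\beta|_v=|\alpha_0-\gamma|_v$; symmetry follows, and transitivity is a short direct check. Hence $\sim$ is an equivalence relation whose classes $C$ are ultrametric balls consisting of points sharing a common positive distance $r_C=|\alpha_0-\beta|_v$ from $\alpha_0$. For each class $C$ select a representative $\beta_C\in C$. Since $|C|<|A|$, the induction hypothesis produces a tree $\cG_C$ on $C$ rooted at $\beta_C$ satisfying \eqref{ultra:prop:arbre:eq}. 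Define $\cG$ by taking the disjoint union of the $\cG_C$, adjoining $\alpha_0$, and adding the edges $\{\alpha_0,\beta_C\}$ for each class $C$. This is manifestly a tree rooted at $\alpha_0$.

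The verification of \eqref{ultra:prop:arbre:eq} then splits into two cases. When $\alpha=\alpha_0$ and $\beta=\beta_C$, one has $D_\cG(\beta)=C\setminus\{\beta\}$, and the equivalence $\gamma\in C\setminus\{\beta\}\Leftrightarrow 0<|\beta-\gamma|_v<r_C$ is immediate from the definition of $\sim$, noting that any $\gamma\in A$ with $0<|\beta-\gamma|_v<r_C$ automatically satisfies $\gamma\ne\alpha_0$ (otherwise $|\beta-\gamma|_v=r_C$). When $\alpha\ne\alpha_0$, both $\alpha$ and $\beta$ lie in a common class $C$, and the induction hypothesis applied to $\cG_C$ yields the equivalence for all $\gamma\in C$. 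It remains to rule out $\gamma\in A\setminus C$.

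For this, the key auxiliary fact — and the main technical point of the argument — is that every edge $\{\mu,\nu\}$ of $\cG_C$ has length $|\mu-\nu|_v<r_C$. This holds because any two elements of $C$ are at distance strictly less than $r_C$ from each other (by ultrametricity, since they are equidistant from $\alpha_0$ at distance $r_C$ and their common class sits inside the open ball of radius $r_C$ around any of them). Granted this bound, $|\alpha-\beta|_v<r_C$, while for $\gamma\in A\setminus C$ one has either $\gamma=\alpha_0$, giving $|\beta-\gamma|_v=r_C$, or $\gamma$ lies in a class $C'\ne C$, in which case the ultrametric alternative gives $|\beta-\gamma|_v\ge r_C$. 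In either case $|\beta-\gamma|_v\ge r_C>|\alpha-\beta|_v$, so the right-hand side of \eqref{ultra:prop:arbre:eq} fails, as does $\gamma\in D_\cG(\beta)$. The main obstacle is precisely the control of edge lengths inside the recursively built subtrees; everything else is a direct application of the ultrametric inequality and the inductive hypothesis.
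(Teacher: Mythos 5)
Your proof is correct, and it follows the same overall skeleton as the paper's (induction on $|A|$, an ultrametric clustering of $A$, recursion on the clusters, and gluing the resulting rooted subtrees to $\alpha_0$), but the clustering itself is genuinely different. The paper takes $\rho$ to be the \emph{largest} mutual distance in $A$, picks a maximal $\rho$-equidistant set $\{\alpha_0,\dots,\alpha_k\}$, and partitions $A$ into the open balls $A_i$ of radius $\rho$ around the $\alpha_i$; in particular $\alpha_0$'s own ball $A_0$ goes through the recursion, so one recursive level corresponds to one distance scale. You instead peel off $\alpha_0$ and cluster $A\setminus\{\alpha_0\}$ by the relation $|\beta-\gamma|_v<|\alpha_0-\beta|_v$, which is centered on the root rather than on the global diameter; a single level of your recursion can therefore handle classes at several different radii $r_C$ at once, and the root never re-enters the recursion. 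The verifications in both arguments come down to the same ultrametric ``isosceles'' facts, and you are appropriately careful about the point the paper glosses over slightly, namely checking that for $\gamma$ outside the relevant cluster both sides of \eqref{ultra:prop:arbre:eq} fail (your bound $|\beta-\gamma|_v\ge r_C>|\alpha-\beta|_v$). What your version buys is a slightly cleaner equivalence-relation formulation and no need to invoke the maximal distance $\rho$ or a maximal equidistant set; what the paper's version buys is that the same ball/partition mechanism is reused verbatim in Theorem \ref{ultra:thm:foret} to produce the forest with prescribed roots $R$.
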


\begin{proof}
We proceed by induction on the cardinality $|A|$ of $A$.
If $|A|=1$, there is nothing to prove. Suppose that
$|A|\ge 2$.  Let $\rho$ be the largest distance
between two elements de $A$, and let
$\{\alpha_0,\dots,\alpha_k\}$ be a maximal subset
of $A$ containing $\alpha_0$, whose elements
are at mutual distance $|\alpha_i-\alpha_j|_v=\rho$
for $0\le i<j\le k$.  Since $v$ is ultrametric, we
have $k\ge 1$ and the sets
\[
 A_i:=\{\beta\in A\,;\, |\alpha_i-\beta|_v<\rho\}
 \quad
 (0\le i \le k)
\]
form a partition of $A$.  For $i=0,\dots,k$, we have
$\alpha_i\in A_i$ and $|A_i|<|A|$, thus we may assume the
existence of a rooted tree $\cG_i=(\alpha_i,A_i,E_i)$
which fulfils Condition \eqref{ultra:prop:arbre:eq}
for each choice of $\alpha,\beta,\gamma \in A_i$ with
$\beta\in S_{\cG_i}(\alpha)$.  We set
\[
 E=E_0\cup\cdots\cup E_k
   \cup\{\{\alpha_0,\alpha_1\},\dots,\{\alpha_0,\alpha_k\}\}.
\]
Then $\cG=(\alpha_0,A,E)$ is a rooted tree.  Let
$\alpha,\beta,\gamma\in A$ with $\beta\in S_\cG(\alpha)$,
and let $i$ be the index for which $\alpha\in A_i$.
If $\beta\in A_i$, then $\beta\in S_{\cG_i}(\alpha)$
and $D_\cG(\beta)=D_{\cG_i}(\beta)$, thus
\[
 \gamma\in D_\cG(\beta)
    \quad\Longleftrightarrow\quad
 \gamma\in D_{\cG_i}(\beta)
    \quad\Longleftrightarrow\quad
 |\alpha-\beta|_v >|\beta-\gamma|_v>0.
\]
If instead $\beta\in A_j$ for some $j\neq i$, then
we must have $i=0$, $\alpha=\alpha_0$ and
$\beta=\alpha_j$.  Then $|\alpha-\beta|_v=\rho$ and
$D_\cG(\beta)=A_j\setminus\{\alpha_j\}$.  So we find
\[
 \gamma\in D_\cG(\beta)
    \quad\Longleftrightarrow\quad
 \rho>|\alpha_j-\gamma|_v>0
    \quad\Longleftrightarrow\quad
 |\alpha-\beta|_v >|\beta-\gamma|_v>0.
\]
Thus $\cG$ has the required property.
\end{proof}

As the proof shows, the graph $\cG$ constructed in this way
is not unique in general (since the choice
$\alpha_1,\dots,\alpha_k\in A$ is not unique).  This leads to
the following construction which in general is not unique
either.

\begin{theorem}
 \label{ultra:thm:foret}
Let $A$ be a non-empty finite subset of $K_v$, let $\delta>0$,
and let $R$ be a maximal subset of $A$ whose elements
are at mutual distance at least $\delta$.
Then, there exists a rooted forest $\cG$ having $A$ as
its set of vertices and $R$ as its set of roots, which
satisfies the following properties:
\begin{itemize}
\item[(i)] for any $\beta\in R$ and $\gamma\in A$, we have
\[
 \gamma\in D_\cG(\beta)
    \ssi
 \delta >|\beta-\gamma|_v>0;
\]
\item[(ii)] for any $\alpha,\beta,\gamma\in A$ with
$\beta\in S_\cG(\alpha)$, we have
\[
 \gamma\in D_\cG(\beta)
    \quad\Longleftrightarrow\quad
 |\alpha-\beta|_v >|\beta-\gamma|_v>0.
\]
\end{itemize}
\end{theorem}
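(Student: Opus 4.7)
The strategy is to reduce to Proposition~\ref{ultra:prop:arbre} by partitioning $A$ into the ``clusters'' of diameter strictly less than $\delta$ around each root, building a rooted tree on each cluster, and gluing these together into a rooted forest on $A$ with root set $R$.

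First, for each $\beta \in R$, I would set
\[
 A_\beta = \{\alpha \in A \,;\, |\alpha - \beta|_v < \delta\}.
\]
The first step is to check that $\{A_\beta\}_{\beta\in R}$ is a partition of $A$. Every $\beta \in R$ obviously lies in $A_\beta$. For $\alpha \in A \setminus R$, the maximality of $R$ forces some $\beta \in R$ with $|\alpha-\beta|_v < \delta$, hence $\alpha \in A_\beta$. Disjointness is where the ultrametric inequality comes in: if $\alpha \in A_{\beta_1}\cap A_{\beta_2}$ for distinct $\beta_1,\beta_2 \in R$, then $|\beta_1-\beta_2|_v \le \max(|\alpha-\beta_1|_v,|\alpha-\beta_2|_v) < \delta$, contradicting $|\beta_1-\beta_2|_v\ge\delta$.

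Next I would apply Proposition~\ref{ultra:prop:arbre} separately to each non-empty set $A_\beta$ with distinguished element $\alpha_0=\beta$, obtaining a rooted tree $\cG_\beta$ with vertex set $A_\beta$, rooted at $\beta$, and satisfying the equivalence \eqref{ultra:prop:arbre:eq}. I then let $\cG$ be the disjoint union of these rooted trees; its vertex set is $A$, its set of roots is $R$, and it is a rooted forest in the sense of Section~\ref{sec:graphes}.

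It remains to verify (i) and (ii). For (ii), fix $\alpha,\beta,\gamma \in A$ with $\beta \in S_\cG(\alpha)$. Then $\alpha$ and $\beta$ lie in a common component $A_{\beta_0}$ with $\beta \in S_{\cG_{\beta_0}}(\alpha)$, and $D_\cG(\beta)=D_{\cG_{\beta_0}}(\beta)\subseteq A_{\beta_0}$. If $\gamma \in A_{\beta_0}$, the equivalence is direct from Proposition~\ref{ultra:prop:arbre}. If $\gamma \notin A_{\beta_0}$, both sides fail: the left side trivially, and for the right side one uses $|\beta-\beta_0|_v<\delta\le|\beta_0-\gamma|_v$ and the ultrametric inequality to get $|\beta-\gamma|_v = |\beta_0-\gamma|_v \ge \delta > |\alpha-\beta|_v$. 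Property (i) is analogous and in fact simpler: since $\cG_\beta$ is a tree rooted at $\beta$, the descendants of $\beta$ in $\cG$ are exactly $A_\beta\setminus\{\beta\}$, which is precisely $\{\gamma \in A \,;\, 0<|\beta-\gamma|_v<\delta\}$. The entire argument is bookkeeping on top of Proposition~\ref{ultra:prop:arbre}, and the only genuinely non-trivial step is identifying the partition $\{A_\beta\}$, whose good behavior hinges on the ultrametric inequality together with the maximality of $R$.
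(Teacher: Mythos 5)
Your proposal is correct and follows essentially the same route as the paper: partition $A$ into the clusters $A_\beta=\{\alpha\in A:|\alpha-\beta|_v<\delta\}$ around the roots (covering by maximality of $R$, disjointness by the ultrametric inequality), apply Proposition~\ref{ultra:prop:arbre} to each cluster, and take the disjoint union. Your verification of (ii) in the case $\gamma\notin A_{\beta_0}$ is the same ultrametric observation the paper uses, just stated in the contrapositive direction.
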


\begin{proof}
For each $\rho\in R$, we define
\[
 A^{(\rho)}=\{\alpha\in A\,;\, |\alpha-\rho|_v<\delta\},
\]
and we choose a rooted tree $\cG^{(\rho)} =
(\rho,A^{(\rho)},E^{(\rho)})$ as in Proposition
\ref{ultra:prop:arbre}.  Since the sets $A^{(\rho)}$
with $\rho\in R$ form a partition of $A$, the union
of these graphs constitute a rooted forest $G=(R,A,E)$
where $E=\cup_{\rho\in R}E^{(\rho)}$.  By construction,
it satisfies Condition (i). To show that Condition (ii)
is also fulfilled, fix $\alpha,\beta,\gamma\in A$ with
$\beta\in S_\cG(\alpha)$, and let $\rho\in R$ such that
$\alpha\in A^{(\rho)}$.  Since $\beta\in S_\cG(\alpha)$,
we have $\beta\in A^{(\rho)}$ and
$D_\cG(\beta)=D_{\cG^{(\rho)}}(\beta)$.  Moreover, if
$\gamma$ satisfies $|\alpha-\beta|_v >|\beta-\gamma|_v$
then $|\beta-\gamma|_v<\delta$ and so $\gamma\in A^{(\rho)}$.
Thus Condition (ii) for $\alpha,\beta,\gamma$ is satisfied
in $\cG$ since it is satisfied in $G^{(\rho)}$.
\end{proof}

In terms of elementary chains, Conditions (i) et (ii)
of the theorem can be reformulated as follows: given
$\gamma\in A$, a sequence $(\gamma_1,\dots,\gamma_k)$
in $G$, with $k\ge 1$ and $\gamma_k\neq \gamma$, starting 
on a root $\gamma_1\in R$, can be extended to an 
elementary chain $(\gamma_1,\dots,\gamma_\ell)$ ending on
$\gamma_\ell=\gamma$ if and only if either we have $k=1$ and
$\delta >|\gamma_1-\gamma|_v > 0$ or the sequence
$(\gamma_1,\dots,\gamma_k)$ is an elementary chain with
$k\ge 2$ and $|\gamma_{k-1}-\gamma_k|_v > |\gamma_k-\gamma|_v>0$.

%
%

\section{Volume of the ultrametric components}
\label{sec:volultra}

We now complete the proof of Theorem \ref{res:principal}
by proving the remaining estimates in parts (ii) and (iii).  The
notation is as in Section \ref{sec:resultat}.

\begin{theorem}
Let $v$ be a place of $K$ above a prime number $p$,
let $\un=(n_1,\dots,n_s)\in\bN_+^s$ and let $N=n_1+\cdots+n_s$.
Then the sub-$\cO_v$-module $\cC_{\un,v}$
of $K_v^s$ defined in Section \ref{subsec:res}
satisfies
\[
 \mu_v(\cC_{\un,v})^{1/d_v}\le (p^3N)^s |\Delta_\un|_v\,.
\]
Moreover, if $|\alpha_i-\alpha_j|_v=1$
for each $i,j\in\{1,\dots,s\}$ with $i\neq j$, then we also have
\[
 \mu_v(\cC_{\un,v})^{1/d_v} = |\Delta_\un|_v.
\]
\end{theorem}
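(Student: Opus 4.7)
Suppose first that $v\notin E$, i.e., $|\alpha_i-\alpha_j|_v=1$ for all distinct $i,j$. Then $\cC_{\un,v}$ is the Cartesian product $\prod_{i=1}^s\{x\in K_v:|x|_v\le|(n_i-1)!|_v\}$, of measure $\prod_i|(n_i-1)!|_v^{d_v}$. Since $|\Delta_\un|_v=\prod_i|(n_i-1)!|_v$ in this case, both the claimed inequality and the equality $\mu_v(\cC_{\un,v})^{1/d_v}=|\Delta_\un|_v$ follow at once.

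From now on let $v\in E$ above a prime $p$, and set $\delta:=p^{-1/(p-1)}$. My plan is to apply Theorem \ref{ultra:thm:foret} to $A=\{\alpha_1,\dots,\alpha_s\}$ with this value of $\delta$, producing a rooted forest $\cG$ with root set $R\subset A$. For each $\alpha_j\in A$ I introduce the linear form $L_j$ on $K_v^s$ defined by $L_j(\ux)=x_j$ when $\alpha_j\in R$, and by $L_j(\ux)=x_j-e^{\alpha_j-\alpha_i}x_i$ when $\alpha_j\in S_\cG(\alpha_i)$; the exponential converges in $K_v$ because $|\alpha_j-\alpha_i|_v<\delta$ by property (ii) of the forest. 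The inequalities \eqref{res:eq:ultra1} and \eqref{res:eq:ultra2} then give $|L_j(\ux)|_v\le p^3N\cdot B_j$ for every $\ux\in\cC_{\un,v}$, where $B_j:=\prod_k\max\{|\alpha_j-\alpha_k|_v,\delta\}^{n_k}$ if $\alpha_j\in R$, and $B_j:=\prod_k\max\{|\alpha_i-\alpha_k|_v,|\alpha_j-\alpha_k|_v\}^{n_k}$ otherwise. After extending the partial order of $\cG$ to a total order on $A$, Proposition \ref{graphes:prop} shows that the matrix of $(L_j)$ with respect to $(x_j)$ is lower triangular with unit diagonal. The corresponding change of coordinates therefore has $v$-determinant $\pm1$, and we obtain $\mu_v(\cC_{\un,v})^{1/d_v}\le(p^3N)^s\prod_jB_j$.

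The main technical step will be to prove, for each fixed $k\in\{1,\dots,s\}$, the identity
\[
 \prod_{j=1}^s M_{j,k}=\delta\prod_{i\neq k}|\alpha_i-\alpha_k|_v,
\]
where $M_{j,k}$ denotes the $k$-th factor inside $B_j$. The natural approach is induction on $|A|$: if $\cG$ has a non-root leaf $\alpha_\ell\neq\alpha_k$, or an isolated root $\alpha_\ell\neq\alpha_k$, then removing $\alpha_\ell$ leaves a rooted forest on $A\setminus\{\alpha_\ell\}$ for which $R\setminus\{\alpha_\ell\}$ is still maximal (one checks this using property (i)), and property (ii) combined with the ultrametric triangle inequality gives $M_{\ell,k}=|\alpha_\ell-\alpha_k|_v$, so the induction hypothesis applies. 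The obstruction is the residual case where $\alpha_k$ is itself the unique leaf of $\cG$; this forces $\cG$ to consist of a single path $\alpha_{k_m}\to\alpha_{k_{m-1}}\to\cdots\to\alpha_{k_0}=\alpha_k$ with strictly increasing edge lengths, and there one verifies the identity directly by a telescoping calculation, using that $|\alpha_{k_\ell}-\alpha_k|_v$ equals the largest edge length $|\alpha_{k_{\ell-1}}-\alpha_{k_\ell}|_v$ on the subpath from $\alpha_{k_\ell}$ to $\alpha_k$.

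Granted this identity, raising to the $n_k$-th power and multiplying over $k$ yields $\prod_jB_j=\delta^N\prod_k\prod_{i\neq k}|\alpha_i-\alpha_k|_v^{n_k}$. The desired bound then follows from $|\Delta_\un|_v=\prod_i|(n_i-1)!|_v\prod_k\prod_{i\neq k}|\alpha_i-\alpha_k|_v^{n_k}$ together with $\delta^N\le\prod_i|(n_i-1)!|_v$, the latter being a consequence of $|(n_i-1)!|_v\ge\delta^{n_i-1}$ from \eqref{estultra:eq:delta} and the fact that $\delta^s\le1$. The main obstacle throughout is the combinatorial identity; the volume estimate and the triangular reduction are otherwise formal consequences of the construction.
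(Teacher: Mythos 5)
Your proof is correct and follows essentially the same route as the paper: the rooted forest of Theorem \ref{ultra:thm:foret} with $\delta=p^{-1/(p-1)}$, the triangular reduction via Proposition \ref{graphes:prop}, and the evaluation of $\prod_j B_j$ as $\delta^N\prod_k\prod_{i\neq k}|\alpha_i-\alpha_k|_v^{n_k}$. The only (cosmetic) difference is that you establish the fixed-$k$ product identity by leaf-peeling induction on the forest, whereas the paper obtains it directly by the case distinctions \eqref{volultra:eq:beta-gamma}--\eqref{volultra:eq:alpha-beta-gamma} and a re-indexing along ancestor chains.
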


\begin{proof}
We apply Theorem \ref{ultra:thm:foret} to the set
$A=\{\alpha_1,\dots,\alpha_s\}$ with $\delta=p^{-1/(p-1)}$.
It provides a rooted forest $G$ with roots $R$, vertices $A$,
and edges $E$.
For each $\alpha\in A$, we define $x_\alpha=x_i$ and
$n_\alpha=n_i$ where $i$ is the index for which
$\alpha=\alpha_i$.  Then, $\cC_{\un,v}$ is
contained in the set $\cK_v$
of points $(x_1,\dots,x_s)\in K_v^s$ satisfying
\[
 |x_\beta|_v
    \le p^3 N \prod_{\gamma\in A}
              \max\{|\beta-\gamma|_v,\,\delta\}^{n_\gamma}
\]
for each root $\beta\in R$, as well as
\[
 |x_\alpha e^{\beta-\alpha} - x_\beta|_v
   \le p^3 N \prod_{\gamma\in A}
             \max\{|\alpha-\gamma|_v,\,|\beta-\gamma|_v\}^{n_\gamma}
\]
for each directed edge $(\alpha,\beta)\in E$ or equivalently
for each pair $\{\alpha,\beta\}$ with $\beta\in S_G(\alpha)$
(since we then have $|\beta-\alpha|_v<\delta$).  By Proposition
\ref{graphes:prop}, the above $s$ linear forms are
linearly independent, with determinant $\pm 1$. So $\cK_v$ is
a free sub-$\cO_v$-module of $K_v^s$ of rank $s$ with
\[
 \mu_v(\cC_{\un,v})^{1/d_v}
   \le \mu_v(\cK_v)^{1/d_v}
   \le (p^3N)^s
      \Delta' \Delta''
\]
where
\[
 \Delta'=\prod_{\substack{\beta\in R\\ \gamma\in A}}
         \max\{|\beta-\gamma|_v,\,\delta\}^{n_\gamma}
 \et
 \Delta''=\prod_{\substack{\alpha,\beta,\gamma\in A\\ \beta\in S_G(\alpha)}}
         \max\{|\alpha-\gamma|_v,\,|\beta-\gamma|_v\}^{n_\gamma}.
\]
Let $\beta,\gamma\in A$.  If $\beta\in R$,
Theorem \ref{ultra:thm:foret} (i) yields
\begin{equation}
 \label{volultra:eq:beta-gamma}
 \max\{|\beta-\gamma|_v,\,\delta\}
  =\begin{cases}
     \delta &\text{if $\gamma\in D_G(\beta)\cup\{\beta\}$,}\\
     |\beta-\gamma|_v &\text{else.}
   \end{cases}
\end{equation}
Otherwise, there exists a unique $\alpha\in A$
such that $\beta\in S_G(\alpha)$ and, since
\[
 |\alpha-\gamma|_v > |\beta-\gamma|_v
 \ssi
 |\alpha-\beta|_v > |\beta-\gamma|_v,
\]
Theorem \ref{ultra:thm:foret} (ii) yields
\begin{equation}
 \label{volultra:eq:alpha-beta-gamma}
 \max\{|\alpha-\gamma|_v,\,|\beta-\gamma|_v\}
  =\begin{cases}
     |\alpha-\gamma|_v &\text{if $\gamma\in D_G(\beta)\cup\{\beta\}$,}\\
     |\beta-\gamma|_v &\text{else.}
   \end{cases}
\end{equation}
Since $D_G(\beta)\cup\{\beta\}$ runs through all connected
components of $G$ as $\beta$ runs through $R$ and since
we have $\sum_{\gamma\in A} n_\gamma=N$, the equality
\eqref{volultra:eq:beta-gamma} implies that
\[
 \Delta'
 = \delta^N
   \prod_{\substack{\beta\in R\\
           \gamma\notin D_G(\beta)\cup\{\beta\}}}
         |\beta-\gamma|_v^{n_\gamma}.
\]
Furthermore, the equality \eqref{volultra:eq:alpha-beta-gamma}
implies that
\[
 \Delta''
 = \Bigg(\prod_{\substack{\alpha\in A\\ \gamma\in D_G(\alpha)}}
         |\alpha-\gamma|_v^{n_\gamma}\Bigg)
   \Bigg(\prod_{\substack{\beta\notin R\\ \gamma\notin D_G(\beta)\cup\{\beta\}}}
         |\beta-\gamma|_v^{n_\gamma}\Bigg)
\]
As a result we obtain
\[
 \Delta'\Delta''
 = \delta^N \prod_{\beta\in A} \prod_{\gamma\in A\setminus\{\beta\}}
    |\beta-\gamma|_v^{n_\gamma}.
\]
Since $\delta^N=\prod_{\beta\in A} \delta^{n_\beta}
\le \prod_{\beta\in A} |n_\beta!|_v
\le \prod_{\beta\in A} |(n_\beta-1)!|_v$, we conclude that
\[
 \mu_v(\cC_{\un,v})^{1/d_v}
   \le (p^3N)^s
       \prod_{\beta\in A}
         \Big|
         (n_\beta-1)!\prod_{\gamma\neq\beta} (\beta-\gamma)^{n_\gamma}
         \Big|_v
   = (p^3N)^s |\Delta_\un|_v.
\]
Finally, if $|\alpha_i-\alpha_j|_v=1$ for each
$i,j\in\{1,\dots,s\}$ with $i\neq j$, then $\cC_{\un,v}$ consists
of all points $(x_1,\dots,x_s)\in K_v^s$ satisfying
\[
 |x_i|_v \le |(n_i-1)!|_v
\]
for $i=1,\dots,s$, thus
\[
 \mu(\cC_{\un,v})^{1/d_v}=\prod_{i=1}^s |(n_i-1)!|_v = |\Delta_\un|_v.
 \qedhere
\]
\end{proof}

%
%


\section{A special case}
\label{sec:dexp}

The adelic convex bodies $\cC_\un$ associated to a point
$(\alpha_1,\dots,\alpha_s)\in K^s$ depend only on the
differences $\alpha_j-\alpha_i$ with $1\le i<j\le s$.  So,
we may always assume that $\alpha_1=0$.  Then for $s=2$,
we simply have a point $(0,\alpha)\in K^2$.  The proposition
below is an explicit form of Corollary \ref{res:cor}
for such a point and for diagonal pairs $\un=(n,n)\in\bN_+^2$.
In this statement, the adelic convex body is rescaled so that
its $v$-adic component is contained in $\cO_v^2$ for each
ultrametric place $v$ of $K$. We use it afterwards to prove Propositions
\ref{intro:prop:imaginaire} and \ref{intro:prop:e3} from the
introduction.  The notation is the same as in Section
\ref{sec:resultat}.

\begin{proposition}
\label{dexp:prop}
Let $\alpha\in K\setminus\{0\}$, and let $E$ be the finite
set of places $v$ of $K$ with $v\mid\infty$ or
$|\alpha|_v\neq 1$. For each place $v$ of $K$ with
$v\nmid\infty$, we set
$B_v=\min\big\{1,p^{1/(p-1)}|\alpha|_v\big\}$ where $p$ is
the prime number below $v$.  We also set
\[
 g=\sum_{v\in E} \frac{d_v}{d}
 \et
 B=\prod_{v\nmid \infty} B_v^{-d_v/d}.
\]
Finally, for each $n\in\bN_+$, we denote by $\tcC_n$
the adelic convex body of $K^2$ whose components $\tcC_{n,v}$
are defined as follows.
\begin{itemize}
\item[(i)] If $v\mid\infty$, then $\tcC_{n,v}$ is the set of
 points $(x,y)\in K_v^2$ such that
\[
 |x|_v \le n^{g-1}\frac{B^n(2n)!}{|\alpha|_v^n\,n!}
 \et
 |xe^\alpha-y|_v \le n^{g}\frac{B^n|\alpha|_v^n}{4^n\,n!}.
\]
\item[(ii)] If $v\mid p$ for a prime number $p$ and if
 $|\alpha|_v<p^{-1/(p-1)}$, then $\tcC_{n,v}$ consists of the
 points $(x,y)\in K_v^2$ such that
\[
 |x|_v \le 1
 \et
 |xe^\alpha-y|_v \le B_v^{2n}.
\]
\item[(iii)] If $v\mid p$ for a prime number $p$ and if
 $|\alpha|_v\ge p^{-1/(p-1)}$, then $\tcC_{n,v}=\cO_v^2$.
\end{itemize}
Then we have
\begin{equation}
\label{dexp:prop:eq1}
 c_4 n^{-2g+1} \le \lambda_1(\tcC_n)\le \lambda_2(\tcC_n) \le c_3
\end{equation}
for constants $c_3,c_4>0$ that depend only on $\alpha$ and $K$.
\end{proposition}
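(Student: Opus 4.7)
The plan is to deduce both bounds from the adelic Minkowski theorem (Theorem \ref{res:thm:MBV}) applied to $\tcC_n$, once its adelic volume has been computed and two $K$-linearly independent points have been exhibited in a fixed dilate. The inputs are the Hermite estimates of Section \ref{subsec:approx:Hermite} for the Archimedean places and those of Lemma \ref{estultra:lemme1} for the ultrametric ones.

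\textbf{Adelic volume of $\tcC_n$.} At an Archimedean $v$, the two linear forms defining $\tcC_{n,v}$ have determinant $\pm 1$, so $\mu_v(\tcC_{n,v})^{1/d_v}$ is a constant multiple of the product of the two bounds, in which the powers of $|\alpha|_v$ cancel; this product equals a constant times $n^{2g-1} B^{2n}(2n)!/(4^n(n!)^2)$. At an ultrametric $v$ in case (ii), $\mu_v(\tcC_{n,v})^{1/d_v} = B_v^{2n}$, and at case (iii), it equals $1$. When forming the adelic product weighted by $d_v/d$, the definition $B = \prod_{v\nmid\infty} B_v^{-d_v/d}$ ensures that the factor $B^{2n}$ at the Archimedean places cancels exactly with $\prod_{v \text{ case (ii)}} B_v^{2nd_v/d}$; Stirling's formula then gives
\[
 \mu(\tcC_n)^{1/d} \;\asymp\; n^{2g-1}\,\frac{(2n)!}{4^n(n!)^2} \;\asymp\; n^{\,2g-3/2}.
\]

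\textbf{Upper bound on $\lambda_2(\tcC_n)$.} For $\un=(n,n)$ and $(\alpha_1,\alpha_2)=(0,\alpha)$, the Hermite approximations $a_{\un-\ue_1}$ and $a_{\un-\ue_2}$ are $K$-linearly independent by Theorem \ref{res:thm:Mahler}. I aim to produce scalars $\nu_1,\nu_2\in K^*$ with $\nu_i\,a_{\un-\ue_i}\in c_3\tcC_n$ for an absolute constant $c_3$. The condition $\nu a_{\un-\ue_i}\in c_3\tcC_{n,v}$ translates into an upper bound $M_v$ on $|\nu|_v$, read off from the Hermite estimates for $|P_{\un-\ue_i}(0)|_v$, $|P_{\un-\ue_i}(\alpha)|_v$, and $|P_{\un-\ue_i}(0)e^\alpha - P_{\un-\ue_i}(\alpha)|_v$ at each place. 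Existence of a nonzero $\nu\in K$ satisfying all these local constraints simultaneously follows from the adelic Minkowski theorem applied to the one-dimensional adelic convex body $\prod_v\{\mu\in K_v : |\mu|_v\le M_v\}$, provided $\prod_v M_v^{d_v}$ is bounded below by a constant depending only on $K$. By Mahler's result the rescaled points are again $K$-linearly independent, so we get $\lambda_2(\tcC_n)\le c_3$.

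\textbf{Lower bound on $\lambda_1(\tcC_n)$.} The lower half of Theorem \ref{res:thm:MBV} gives $\lambda_1(\tcC_n)\lambda_2(\tcC_n) \ge c\,\mu(\tcC_n)^{-1/d}$ for a constant $c>0$ depending only on $K$. Combined with the volume estimate and the upper bound above, this yields $\lambda_1(\tcC_n)\ge c_4\,n^{-2g+3/2}$, which in particular implies $\lambda_1(\tcC_n)\ge c_4\,n^{-2g+1}$ for $n\ge 1$.

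\textbf{Main obstacle.} The delicate step is verifying that $\prod_v M_v^{d_v}$ is uniformly bounded below in $n$. At the Archimedean places one must rescale down by roughly $(n-1)!$ to accommodate the factorial-size Hermite values, while at ultrametric case (iii) places with $|\alpha|_v<1$ the same rescaling risks overshooting $\cO_v^2$. The quantities $B$ and $B_v$ in the definition of $\tcC_n$ are calibrated precisely so that the product formula applied to $|\alpha|_v$ and to $n!$ produces the required cancellations, leaving only polynomial factors in $n$ that can be absorbed into $c_3$; in this sense the verification of Step 2 reduces to the same volume computation as Step 1.
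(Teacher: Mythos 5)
Your proposal is correct and follows essentially the same route as the paper: the paper also rescales the Hermite points $a_{\un-\ue_1},a_{\un-\ue_2}$ (for $\un=(n,n)$) into $c_3\tcC_n$ by a global scalar obtained from the one-dimensional adelic Minkowski theorem applied to $\cD=\prod_v\{x\in K_v:|x|_v\le|a_v|_v\}$ with $a_v\asymp B^n/(\alpha^n(n-1)!)$ at Archimedean places, and then gets the lower bound on $\lambda_1$ from the $s=2$ Minkowski theorem and an upper bound on $\mu(\tcC_n)^{1/d}$. The ``main obstacle'' you flag is precisely the computation the paper carries out via the product formula for the principal idele $\alpha^n(n-1)!$, and it does close as you predict; your Stirling refinement $\mu(\tcC_n)^{1/d}\asymp n^{2g-3/2}$ is sharper than the paper's bound $4n^{2g-1}$ but inessential.
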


\begin{proof}
Let $n\in\bN_+$. We consider the adelic convex body
$\cC_\un$ constructed in Section \ref{subsec:res} for the
choice of $\alpha_1=0$, $\alpha_2=\alpha$ and $\un=(n,n)$.  For an
Archimedean place $v$ of $K$ associated to an embedding
$\sigma\colon K\hookrightarrow\bC$ and for $k=1,2$, we find
\begin{align*}
 \left|\int_0^{\sigma(\alpha)}
       f_{\un-\ue_k}^\sigma(z)e^{\sigma(\alpha)-z} dz \right|
 &\le |\sigma(\alpha)| e^{|\sigma(\alpha)|}
      \max_{t\in[0,1]}
        \left|f_{\un-\ue_k}^\sigma(\sigma(\alpha)t)\right|\\
 &\le e^{|\sigma(\alpha)|} |\sigma(\alpha)|^{2n}
      \max_{t\in[0,1]} t^{n-1}(1-t)^{n-1}\\
 &= 4 e^{|\alpha|_v} (|\alpha|_v/2)^{2n}.
\end{align*}
Thus the points $(x,y)$ of $\cC_{\un,v}$ satisfy
\[
 |x|_v\le e^{|\alpha|_v}(2n-1)!
 \et
 |xe^\alpha-y|_v\le 4e^{|\alpha|_v}(|\alpha|_v/2)^{2n}.
\]
This implies that $a_v\cC_{\un,v} \subseteq \tcC_{n,v}$
for
\[
 a_v=\frac{n^{g-1}B^n}{4e^{|\alpha|_v} \alpha^n (n-1)!}
    \in K_v^\times.
\]
For each prime number $p$ and each place $v$ of $K$
with $v\mid p$, we also find that
$a_v\cC_{\un,v} \subseteq \tcC_{n,v}$ for
\[
 a_v=\frac{p^{t_v}}{\alpha^n(n-1)!} \in K_v^\times
\]
where $t_v$ is the integer for which
\[
 2np^3B_v^{-n}\le p^{t_v} < 2np^4B_v^{-n}
\]
if $v\in E$, and $t_v=0$ otherwise.  This computation is
based simply on the fact that
$|(n-1)!|_v\ge |n!|_v\ge p^{-n/(p-1)}$.  Thus we
obtain $a\,\cC_\un\subseteq\tcC_n$ for the idele
$a=(a_v)_v \in K_\bA^\times$.

The product $\cD=\prod_v \{x\in K_v\,;\, |x|_v\le |a_v|_v\}
\subset K_\bA$
is an adelic convex body of $K$.  By the product formula
applied to the principal idele $\alpha^n(n-1)!\in K^\times$,
we find that the volume of $\cD$ is
\[
 \mu(\cD) = 2^{r_1}\pi^{r_2}\prod_v |a_v|_v^{d_v}
   = 2^{r_1}\pi^{r_2}
     \prod_{v\mid\infty}
       \Big(\frac{n^{g-1}B^n}{4e^{|\alpha|_v}}\Big)^{d_v}
     \prod_{p,\,v\mid p} p^{-t_vd_v}.
\]
Since $\prod_{v\mid\infty} B^{d_v} = B^d =
\prod_{v\nmid\infty}B_v^{-d_v}$, this can be rewritten as
\[
 \mu(\cD) = c_1 n^{d(g-1)} \prod_{p,\,v\mid p} (p^{t_v}B_v^n)^{-d_v},
\]
with $c_1=2^{r_1}\pi^{r_2} \prod_{v\mid\infty} (4e^{|\alpha|_v})^{-d_v}$.
Since $p^{t_v}B_v^n=1$ if $v\notin E$ and $p^{t_v}B_v^n<2np^4$ if
$v\in E$ and $v\mid p$, this yields
\[
 \mu(\cD)
   \ge c_2 n^{d(g-1)} \prod_{v\in E'} n^{-d_v}
   = c_2 n^{dg} \prod_{v\in E} n^{-d_v}
   = c_2,
\]
where $E'=\{v\in E\,;\,v\nmid\infty\}$ and
$c_2=c_1\prod_{v\in E'}(2p^4)^{-d_v}$.
By Theorem \ref{res:thm:MBV} (with $s=1$), we thus have
$\lambda_1(\cD)\le c_3$ where $c_3=(2^{r_1+r_2}|D(K)|^{1/2}c_2^{-1})^{1/d}$.
This means that there exists $\beta\in K^\times$ satisfying
$|\beta|_v\le c_3|a_v|_v$ for all Archimedean places $v$ of $K$
and $|\beta|_v\le |a_v|_v$ for all other places.  So, we obtain
\[
 \beta\cC_\un\subseteq c_3\tcC_n,
\]
which yields
\[
 \lambda_1(\tcC_n)\le \lambda_2(\tcC_n)\le c_3
\]
since $\beta\cC_\un$ contains the $K$-linearly independent points
$\beta\ua_{\un-\ue_1},\beta\ua_{\un-\ue_2}$ of $K^2$.
By Theorem \ref{res:thm:MBV} (with $s=2$), this implies that
\[
 \lambda_1(\tcC_n)
   \ge (2c_3)^{-1}\mu(c_3\tcC_n)^{-1/d}
     = (2c_3^2)^{-1}\mu(\tcC_n)^{-1/d}.
\]
Finally, for each place $v$ of $K$, we find that
\[
 \mu_v(\tcC_{n,v})^{1/d_v}
   \le \begin{cases}
         4n^{2g-1}B^{2n}
         &\text{if $v\mid\infty$,}\\
         B_v^{2n}
         &\text{else.}
        \end{cases}
\]
Since $B^d\prod_{v\nmid\infty}B_v^{d_v}=1$, this implies that
$\mu(\tcC_n)^{1/d}\le 4n^{2g-1}$, and so
\eqref{dexp:prop:eq1} follows with $c_4=(8c_3^2)^{-1}$.
\end{proof}

\begin{proof}[Proof of Proposition
\ref{intro:prop:imaginaire}]
Under the hypotheses of this proposition, the field
$K$ admits a single Archimedean place $\infty$,
induced by the inclusion $K\subset\bC$.  Moreover, in
the notation of Proposition \ref{dexp:prop}, the choice
of $\alpha$ leads to $B_v=1$ for any other place $v$ of
$K$. Thus, for each $n\in\bN_+$, we obtain
\[
 \tcC_n=\tcC_{n,\infty}\times\prod_{v\neq\infty}\cO_v^2,
\]
where $\tcC_{n,\infty}$ consists of all points $(x,y)$
of $K_\infty^2\subseteq\bC^2$ satisfying
\[
 |x| \le n^{g-1}\frac{(2n)!}{|\alpha|^n\,n!}
 \et
 |xe^\alpha-y| \le n^{g}\frac{|\alpha|^n}{4^n\,n!}.
\]
Moreover, by \eqref{dexp:prop:eq1}, we have
$\lambda_1(\tcC_n)\ge c_4n^{-2g+1}$ for a constant
$c_4>0$ depending only on $\alpha$ and $K$.

Let $(x,y)\in\cO_K^2$ with $x\neq 0$. The above implies that,
for each $n\in\bN_+$,
\[
 \text{if} \quad |x| < h(n) := c_4 n^{-g}\frac{(2n)!}{|\alpha|^n\,n!}
 \quad\text{then}\quad
 |xe^\alpha-y| \ge c_4 n^{-g+1}\frac{|\alpha|^n}{4^n\,n!} \,.
\]
If $|x|$ is large enough, we can find an integer $n\ge 2$ such that
$e^n\le h(n-1)\le |x|<h(n)$.  Then we have $n\le \log |x|$
and we obtain
\begin{align*}
 |x|\,|xe^\alpha-y|
   &\ge h(n-1)c_4 n^{-g+1}\frac{|\alpha|^n}{4^n\,n!} \\
    &\ge c_4^2|\alpha| n^{-2g}\binom{2n-2}{n-1}4^{-n}
      \ge c_5 n^{-2g-1}
      \ge c_5 (\log |x|)^{-2g-1},
\end{align*}
with $c_5=c_4^2|\alpha|/8$.  Since $\cO_K$ is a discret subset
of $\bC$, this leaves out a finite number of values of $x$.
To include them in the final lower bound, it suffices to
replace $c_5$ by a sufficiently small constant $c>0$.
\end{proof}

\begin{proof}[Proof of Proposition \ref{intro:prop:e3}]
We apply Proposition \ref{dexp:prop} with $K=\bQ$ and
$\alpha=3$.  In this context, we have $g=2$ and $B=B_3^{-1}=3^{1/2}$.
For a given $n\in\bN_+$, a simple computation shows that the
Archimedean component $\tcC_{n,\infty}$ of the adelic
convex body $\tcC_n$ satisfies
\begin{equation}
 \label{dexp:e3:eq1}
 n\cC_n \subseteq \tcC_{n,\infty}\subseteq n^2\cC_n,
\end{equation}
where $\cC_n$ is the convex body of $\bR^2$ defined in
Proposition \ref{intro:prop:e3}.  For its ultrametric
components, we find that
\[
 \tcC_{n,3}=\{(x,y)\in\bZ_3\,;\, |xe^3-y|_3\le 3^{-n}\}
\]
and $\tcC_{n,p}=\bZ_p^2$ for each prime number $p\neq 3$.
Thus the points of $\bQ^2$ which belong to the latter
components are exactly those of the lattice $\Lambda_n$ in
Proposition \ref{intro:prop:e3}.  Therefore, the minima
of $\tcC_n$ with respect to $\bQ^2$ in the adelic sense
are also the minima of $\tcC_{n,\infty}$ with respect to $\Lambda_n$
in the classical sense.  In view of the inclusions \eqref{dexp:e3:eq1},
this implies that $c_4n^{-2}\le \lambda_1(\cC_n,\Lambda)\le
\lambda_2(\cC_n,\Lambda)\le c_3n^2$ for the constants $c_3$ and $c_4$
given by Proposition \ref{dexp:prop}.
\end{proof}

%
%

\section{Numerical computations}
\label{sec:num}

The formulas in
Appendix \ref{sec:rel} allow us to compute recursively
the diagonal Hermite approximations to $(1,e^3)$.
In this last section,
we explain how they can be used to compute efficiently the
partial quotients in the continued fraction expansion
of $e^3\in\bR$, and then to verify the inequalities
\eqref{intro:eq:loglog} from the introduction.
Our reference for continued fractions is \cite[Ch.~I]{Sc1980}.

Let $e^3=[a_0,a_1,a_2,\dots]$ denote the continued fraction
expansion of $e^3$.  Its first terms are
\[
 e^3=[20, 11, 1, 2, 4, 3, 1, 5, 1, 2, 16,\dots ],
\]
without any noticeable regularity.  For each integer $n\ge 0$,
we form the $n$-th convergent of $e^3$
\[
 \frac{p_n}{q_n}=[a_0,a_1,\dots,a_n]
\]
with $p_n\in\bZ$, $q_n\in\bN_+$ and $\gcd(p_n,q_n)=1$.
The table below lists all integers $n\ge 1$ with
$q_{n-1}\le 10^{500\,000}$ for which
\[
 a_n=\max\{a_1,a_2,\dots,a_n\}.
\]
For each of those integers, it provides the corresponding value
of $a_n$ as well as the value of $\log(q_{n-1})$ truncated at
the first decimal place.

\[
\begin{array}{r|ccccccccccc}
 n&1&10&31&87&133&211&244&388&2708&8055\\[2pt]
 \hline\\[-10pt]
 a_n&11&16&68&189&492&739&2566&5885&6384&10409\\[2pt]
 \log(q_{n-1})& 0.0& 9.4& 34.5& 97.9& 151.1& 256.6& 297.6&
 475.0& 3307.2& 9614.8
 \end{array}
 \quad
\]

\[
\begin{array}{r|ccccccc}
 n&9437&29508&30939&43482&91737&196440&476544\\[2pt]
 \hline\\[-10pt]
 a_n&19362&21981&46602&51140&315466&546341&569869\\[2pt]
 \log(q_{n-1})&11258.4& 34996.8& 36750.6& 51515.4& 109063.1& 233261.9& 566111.1
\end{array}
\]

To show how this implies the estimations \eqref{intro:eq:loglog},
define $\psi(x)=3\log(x)\log(\log(x))$ for each $x\ge e$.
For each pair $(p,q)\in\bZ^2$ with $q\ge 1$, there exists an
integer $n\ge 1$ such that $q_{n-1} \le q < q_n$.  By a
theorem of Lagrange \cite[Chapter~I, Theorem~5E]{Sc1980}, we have
\[
 |qe^3-p|
   \ge |q_{n-1}e^3-p_{n-1}|
   \ge \frac{1}{q_n+q_{n-1}}
   \ge \frac{1}{(a_n+2)q_{n-1}}.
\]
Assuming $q\ge 3$, this implies that
\begin{equation}
 \label{num:eq1}
 \psi(q)q\,|qe^3-p|
   \ge \frac{\psi(q_{n-1})}{a_n+2}.
\end{equation}
It is easy to check that the right hand side of
\eqref{num:eq1} is $\ge 1$ for all entries $n$ of the table
with $n\ge 10$.  Thus it is also $\ge 1$ for each integer
$n\ge 10$ with $q_{n-1}\le 10^{500\,000}$.  A quick
computation shows that this is also true
for $n=2,\dots,9$.  Thus the left hand side of
\eqref{num:eq1} is $\ge 1$ if $11\le q\le 10^{500\,000}$.
Finally, one checks that this is still true when
$4\le q\le 10$.

To compute the partial quotients $a_n$, put
\[
 C_n=\begin{pmatrix} 2n-4 &2n-1\\ 2n-1 &2n+2 \end{pmatrix}
 \et
 A_n=C_n\cdots C_1
\]
for each $n\ge \bN_+$.  By Corollary \ref{rel:cor2}
in the Appendix, the rows of $(n-1)!A_n$ are
Hermite's approximations $\ua_{n-1,n}$ and $\ua_{n,n-1}$
to $(1,e^3)$.  Thus we have
\begin{equation}
 \label{num:eq2}
 \lim_{n\to\infty} A_n\begin{pmatrix} e^3\\-1\end{pmatrix}
 = \begin{pmatrix} 0\\0\end{pmatrix}\,.
\end{equation}
We also note that, for each $n\ge 2$, the matrices
$C_n$ and $A_n$ belong to the set
\[
 \cM
  =\left\{
    \begin{pmatrix}t&u\\t'&u'\end{pmatrix}\in\Mat_{2\times 2}(\bZ)
    \,;\, 0\le t<u,\ 0\le t'<u' \et tu'\neq t'u
   \right\}.
\]
This is clear for the matrices $C_n$.  For the matrices $A_n$,
this follows from the fact that $\cM$ is closed under matrix
multiplication.

In general, if $A=\begin{pmatrix}t&u\\t'&u'\end{pmatrix}
\in\cM$, the ratios $t/u$ and $t'/u'$ admit unique
continued fraction expansions
\[
 \frac{t}{u}=[a_0,a_1,\dots,a_\ell]
 \et
 \frac{t'}{u'}=[a'_0,a'_1,\dots,a'_{\ell'}]
\]
with $a_0=a'_0=0$, $a_\ell\ge 2$ if $\ell\ge 1$, and
$a'_{\ell'}\ge 2$ if $\ell'\ge 1$.  Let
$(a_0,\dots,a_k)$ be the common initial part of the sequences
$(a_0,\dots,a_\ell)$ and $(a'_0,\dots,a'_{\ell'})$.  When $k=0$,
that is when $t=0$ or $t'=0$ or
$\lfloor u/t\rfloor\neq \lfloor u'/t'\rfloor$,
we say that $A$ is \emph{reduced}.  Then, we find that
\[
 A = R\begin{pmatrix}0&1\\1&a_k\end{pmatrix}
     \cdots\begin{pmatrix}0&1\\1&a_1\end{pmatrix}
\]
where $R\in\cM$ is reduced, with the convention that the
right hand side is $R$ when $k=0$.
In particular, for each $n\ge 2$, we obtain
\[
 A_n=R_n\begin{pmatrix}0&1\\1&a_{k(n)}\end{pmatrix}
     \cdots\begin{pmatrix}0&1\\1&a_1\end{pmatrix}
\]
for a reduced matrix $R_n\in\cM$, integers
$0\le k(1)\le k(2)\le \cdots$ and positive integers
$a_1,a_2,\dots$ such that
\begin{equation}
 \label{rel:eq:recurrence}
 C_{n+1}R_n=R_{n+1}\begin{pmatrix}0&1\\1&a_{k(n+1)}\end{pmatrix}
     \cdots\begin{pmatrix}0&1\\1&a_{k(n)+1}\end{pmatrix},
\end{equation}
with the convention that the product on the right is $R_{n+1}$
when $k(n+1)=k(n)$.  By \eqref{num:eq2}, the integers
$k(n)$ go to infinity with $n$ and so we conclude that
\[
 e^{-3}=[0,a_1,a_2,\dots]
 \et
 e^3=[a_1,a_2,\dots]
\]
are the respective continued fraction expansions of
$e^{-3}$ and $e^3$.  Therefore, to compute their partial
quotients $a_k$, it suffices to compute recursively the matrices
$R_n$ whose coefficients are in practice much smaller then
those of $A_n$ (we may also at each step factor out the
power of $3$ dividing $R_n$).  To further save
computation time we do not compute exactly the integers
$q_n$ but keep only a floating point approximation
of them (in practice we use 10 significative decimal digits).
In this way, it takes slightly above an hour of CPU time
to produce the tables using MAPLE software with a 64 bits
intel i5 processor.

%
%

\appendix

\section{Recurrence relations}
\label{sec:rel}

The notation being as in Section \ref{subsec:approx:Hermite}
we extend the definition of $f_\un(z)$, $P_\un(z)$ and
$a_\un$ to any $s$-tuple $\un\in\bZ^s$ by setting
\[
 f_\un(z)=P_\un(z)=0
 \et
 a_\un=(0,\dots,0)
 \quad\text{if}\quad
 \un\notin\bN^s.
\]
For each $\un\in\bN_+^s$, we denote by $A_\un$ the matrix whose
$\ell$-th row is $\ua_{\un-\ue_\ell}$ for $\ell=1,\dots,s$.
In \cite[\S\S IX-X]{He1873}, Hermite provides a
recurrence formula linking $A_{\un+\uun}$ to $A_\un$ where 
$\uun=(1,\dots,1)$.  Here we give more general recurrence
relations based on the same principle.  The formula
\eqref{rel:eq1} below is due to Hermite
\cite[\S IX, p.~230]{He1873} when $\un\in\bN_+^s$.

\begin{proposition}
\label{rel:prop}
Let $\un=(n_1,\dots,n_s)\in\bN^s$. We have
\begin{equation}
 \label{rel:eq1}
 a_\un = (f_\un(\alpha_1),\dots,f_\un(\alpha_s))
          + \sum_{j=1}^s n_ja_{\un-\ue_j}\,.\\
\end{equation}
Moreover, if $k,\ell\in\{1,\dots,s\}$ with $n_k\ge 1$,
we also have
\begin{equation}
 \label{rel:eq2}
 a_{\un+\ue_\ell-\ue_k}
   = a_\un + (\alpha_k-\alpha_\ell)a_{\un-\ue_k}.
\end{equation}
\end{proposition}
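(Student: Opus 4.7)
The two identities are really statements about the polynomials $P_\un(z)$ themselves, evaluated at $z=\alpha_i$; so my plan is to prove the corresponding polynomial identities
\begin{equation*}
 P_\un(z) = f_\un(z) + \sum_{j=1}^s n_j P_{\un-\ue_j}(z)
 \et
 P_{\un+\ue_\ell-\ue_k}(z) - P_\un(z) = (\alpha_k-\alpha_\ell)P_{\un-\ue_k}(z),
\end{equation*}
and then specialize. The key ingredient in both cases is the observation that logarithmic differentiation of $f_\un(z)=\prod_j(z-\alpha_j)^{n_j}$ gives
\[
 f_\un'(z) = \sum_{j=1}^s n_j\,f_{\un-\ue_j}(z),
\]
an identity which remains valid under the convention $f_{\un-\ue_j}=0$ when $n_j=0$, since the corresponding coefficient $n_j$ vanishes.

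For the first identity, I would start from the definition $P_\un(z)=\sum_{k=0}^N f_\un^{(k)}(z)$ and compute the telescoping relation
\[
 P_\un'(z) = \sum_{k=0}^N f_\un^{(k+1)}(z) = P_\un(z) - f_\un(z),
\]
using that $f_\un^{(N+1)}=0$. On the other hand, iterating the logarithmic derivative formula shows $f_\un^{(k+1)}(z)=\sum_j n_j f_{\un-\ue_j}^{(k)}(z)$ for every $k\ge 0$, so summing over $k=0,\dots,N-1$ and recognising the sum as $P_{\un-\ue_j}$ (note that $\un-\ue_j$ has degree $N-1$ when $n_j\ge 1$, and contributes $0$ when $n_j=0$) yields
\[
 P_\un'(z) = \sum_{j=1}^s n_j\,P_{\un-\ue_j}(z).
\]
Combining the two expressions for $P_\un'(z)$ gives the first identity; evaluating at $\alpha_i$ for $i=1,\dots,s$ produces \eqref{rel:eq1}.

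For the second identity, I would exploit the direct factorization. Since $n_k\ge 1$, both $\un$ and $\un':=\un+\ue_\ell-\ue_k$ lie in $\bN^s$ with the same total degree $N$, and
\[
 f_{\un'}(z)-f_\un(z) = \bigl((z-\alpha_\ell)-(z-\alpha_k)\bigr)\,f_{\un-\ue_k}(z)
 = (\alpha_k-\alpha_\ell)\,f_{\un-\ue_k}(z).
\]
Differentiating $j$ times and summing over $j=0,\dots,N$ (where the last term $f_{\un-\ue_k}^{(N)}(z)$ vanishes since $\un-\ue_k$ has degree $N-1$) gives
$P_{\un'}(z)-P_\un(z)=(\alpha_k-\alpha_\ell)P_{\un-\ue_k}(z)$, and evaluation at $\alpha_i$ yields \eqref{rel:eq2}. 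The only delicate point is bookkeeping the boundary cases $\un\notin\bN_+^s$, but the conventions $f_\um=P_\um=0$ for $\um\notin\bN^s$, together with the fact that the offending coefficients ($n_j$ in the first identity, $n_k$ in the second) vanish exactly when the corresponding polynomial is undefined, make everything consistent. I do not anticipate a substantive obstacle beyond this verification.
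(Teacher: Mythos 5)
Your proposal is correct and follows essentially the same route as the paper: both identities are derived at the level of the polynomials $P_\un$ by applying the operator "sum of all derivatives" to the Leibniz identity $f_\un'=\sum_j n_j f_{\un-\ue_j}$ (for \eqref{rel:eq1}) and to the factorization identity $f_{\un+\ue_\ell-\ue_k}-f_\un=(\alpha_k-\alpha_\ell)f_{\un-\ue_k}$ (for \eqref{rel:eq2}), then evaluating at the $\alpha_i$. The degree bookkeeping and the conventions for $\um\notin\bN^s$ are handled correctly.
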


\begin{proof}
Leibniz formula for the derivative of a product gives
\[
 f_\un'(z)=\sum_{j=1}^s n_j f_{\un-\ue_j}(z).
\]
Taking the sum of all derivatives on both sides of
this equality, we obtain
\[
 P_\un(z)=f_\un(z)+\sum_{j=1}^s n_j P_{\un-\ue_j}(z)
\]
and \eqref{rel:eq1} follows.  The formula \eqref{rel:eq2}
is trivial if $k=\ell$.
Suppose that $k\neq \ell$ and $n_k\ge 1$ so that
$\un-\ue_k\in\bN^s$. Then we find
\[
 f_{\un+\ue_\ell-\ue_k}(z) - f_\un(z)
   = (z-\alpha_\ell)f_{\un-\ue_k}(z)-(z-\alpha_k)f_{\un-\ue_k}(z)
   = (\alpha_k-\alpha_\ell)f_{\un-\ue_k}(z).
\]
Taking again the sum of the derivatives, this yields
\[
 P_{\un+\ue_\ell-\ue_k}(z)
   = P_\un(z) + (\alpha_k-\alpha_\ell)P_{\un-\ue_k}(z)
\]
and \eqref{rel:eq2} follows.
\end{proof}

\begin{cor}
\label{rel:cor1}
Let $\un=(n_1,\dots,n_s)\in \bN_+^s$ and $\ell\in\{1,\dots,s\}$.
Then we have
\[
 A_{\un+\ue_\ell} = M_{\un,\ell} A_\un
\]
where
\[
 M_{\un,\ell}
   =\begin{pmatrix}
     n_1+(\alpha_1-\alpha_\ell) &n_2 &\cdots &n_s\\
     n_1 &n_2+(\alpha_2-\alpha_\ell) &\cdots &n_s\\
     \vdots &\vdots&\ddots&\vdots\\
     n_1 &n_2 &\cdots &n_s+(\alpha_s-\alpha_\ell)
    \end{pmatrix}.
\]
\end{cor}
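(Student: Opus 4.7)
The plan is to verify the identity row-by-row by computing the $k$-th row of $M_{\un,\ell}A_\un$ and matching it against the $k$-th row of $A_{\un+\ue_\ell}$, which by definition is $\ua_{\un+\ue_\ell-\ue_k}$. The two formulas proved in Proposition \ref{rel:prop} are exactly tailored for this.

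First I would unpack the matrix product: for each $k\in\{1,\dots,s\}$, the $k$-th row of $M_{\un,\ell}A_\un$ equals
\[
 \sum_{j=1}^s (M_{\un,\ell})_{kj}\,\ua_{\un-\ue_j}
 = \sum_{j=1}^s n_j\,\ua_{\un-\ue_j} \;+\; (\alpha_k-\alpha_\ell)\,\ua_{\un-\ue_k},
\]
where the splitting uses that the only off-diagonal correction to the $j=k$ term is $\alpha_k-\alpha_\ell$.

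Next, I would apply formula \eqref{rel:eq1} of Proposition \ref{rel:prop}. Since $\un\in\bN_+^s$, each exponent $n_j$ in $f_\un(z)=\prod_j(z-\alpha_j)^{n_j}$ is positive, so $f_\un(\alpha_j)=0$ for $j=1,\dots,s$. Thus \eqref{rel:eq1} collapses to the clean identity
\[
 \ua_\un = \sum_{j=1}^s n_j\,\ua_{\un-\ue_j}.
\]
Substituting this back, the $k$-th row of $M_{\un,\ell}A_\un$ becomes $\ua_\un + (\alpha_k-\alpha_\ell)\ua_{\un-\ue_k}$.

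Finally, I would invoke formula \eqref{rel:eq2} of the same proposition. When $k\neq \ell$, since $n_k\ge 1$, that formula gives exactly $\ua_{\un+\ue_\ell-\ue_k}=\ua_\un+(\alpha_k-\alpha_\ell)\ua_{\un-\ue_k}$, which is the desired $k$-th row of $A_{\un+\ue_\ell}$. When $k=\ell$, the factor $\alpha_k-\alpha_\ell$ vanishes and the expression reduces to $\ua_\un=\ua_{\un+\ue_\ell-\ue_\ell}$, which again matches. There is no genuine obstacle here: the corollary is just the matrix repackaging of \eqref{rel:eq1} and \eqref{rel:eq2}, and the only mild subtlety is noticing that the hypothesis $\un\in\bN_+^s$ forces the inhomogeneous term in \eqref{rel:eq1} to disappear.
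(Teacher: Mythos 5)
Your proof is correct and follows essentially the same route as the paper: both arguments observe that $\un\in\bN_+^s$ forces $f_\un(\alpha_j)=0$ so that \eqref{rel:eq1} collapses to $\ua_\un=\sum_j n_j\ua_{\un-\ue_j}$, and then combine this with \eqref{rel:eq2} to identify the $k$-th row of $M_{\un,\ell}A_\un$ with $\ua_{\un+\ue_\ell-\ue_k}$. Your explicit treatment of the $k=\ell$ case is a harmless extra detail the paper leaves implicit.
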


\begin{proof}
As the entries of $\un$ are positive, the polynomial $f_\un$
vanishes at all points $\alpha_1,\dots,\alpha_s$ and the
formulas of Proposition \ref{rel:prop} yield
\[
 a_{\un+\ue_\ell-\ue_k}
   = (\alpha_k-\alpha_\ell)a_{\un-\ue_k} + \sum_{j=1}^s n_j a_{\un-\ue_j}
 \quad
 (1\le k\le s).
\qedhere
\]
\end{proof}

When $s=2$, this provides a quick way of computing
the matrices $A_{n,n}$.

\begin{cor}
\label{rel:cor2}
Suppose that $s=2$, $\alpha_1=0$ and $\alpha_2=\alpha\in K\setminus\{0\}$.
Then, for each $n\in\bN_+$, we have
\begin{equation}
 \label{rel:cor2:eq}
 A_{n,n}
  = \begin{pmatrix}
      P_{n-1,n}(0) &P_{n-1,n}(\alpha)\\
      P_{n,n-1}(0) &P_{n,n-1}(\alpha)
    \end{pmatrix}
  = (n-1)! C_nC_{n-1}\cdots C_1
\end{equation}
where
\[
 C_i
   =\begin{pmatrix}
     2i-1-\alpha &2i-1\\
     2i-1 &2i-1+\alpha
    \end{pmatrix}
 \quad
 (i\in\bN_+).
\]
\end{cor}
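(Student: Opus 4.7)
The plan is to prove the second equality in \eqref{rel:cor2:eq} by induction on $n$, applying Corollary \ref{rel:cor1} twice at each step to pass from $A_{n,n}$ to $A_{n+1,n+1}$. The first equality is just the definition of $A_{n,n}$ unpacked with $s=2$, $\alpha_1=0$, $\alpha_2=\alpha$, using the fact from Section \ref{subsec:approx:Hermite} that the $\ell$-th row of $A_\un$ is $\ua_{\un-\ue_\ell}=(P_{\un-\ue_\ell}(\alpha_1),\dots,P_{\un-\ue_\ell}(\alpha_s))$.

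First I would specialize Corollary \ref{rel:cor1} to our setting. For $\un=(n_1,n_2)\in\bN_+^2$, the matrices $M_{\un,1}$ and $M_{\un,2}$ become
\[
 M_{\un,1}=\begin{pmatrix} n_1 & n_2 \\ n_1 & n_2+\alpha \end{pmatrix}
 \et
 M_{\un,2}=\begin{pmatrix} n_1-\alpha & n_2 \\ n_1 & n_2 \end{pmatrix}.
\]
To go from $A_{n,n}$ to $A_{n+1,n+1}$, I would pass through the intermediate point $\un=(n+1,n)$, applying Corollary \ref{rel:cor1} first with $\ell=1$ and then with $\ell=2$:
\[
 A_{n+1,n+1}
  = M_{(n+1,n),2}\,M_{(n,n),1}\,A_{n,n}.
\]
A direct $2\times 2$ multiplication then gives the key identity
\[
 M_{(n+1,n),2}\,M_{(n,n),1}
  = \begin{pmatrix} n+1-\alpha & n \\ n+1 & n \end{pmatrix}
    \begin{pmatrix} n & n \\ n & n+\alpha \end{pmatrix}
  = n\begin{pmatrix} 2n+1-\alpha & 2n+1 \\ 2n+1 & 2n+1+\alpha \end{pmatrix}
  = n\,C_{n+1}.
\]

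For the base case $n=1$, I would compute $\ua_{(0,1)}$ and $\ua_{(1,0)}$ from scratch: $f_{(0,1)}(z)=z-\alpha$ yields $P_{(0,1)}(z)=z+1-\alpha$, and $f_{(1,0)}(z)=z$ yields $P_{(1,0)}(z)=z+1$, so
\[
 A_{1,1}=\begin{pmatrix} 1-\alpha & 1 \\ 1 & 1+\alpha \end{pmatrix}=C_1=0!\,C_1.
\]
The inductive step then reads
\[
 A_{n+1,n+1}
  = n\,C_{n+1}\,A_{n,n}
  = n\cdot (n-1)!\,C_{n+1}C_n\cdots C_1
  = n!\,C_{n+1}C_n\cdots C_1,
\]
which closes the argument. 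There is no real obstacle: the content is entirely contained in the matrix-product identity $M_{(n+1,n),2}M_{(n,n),1}=nC_{n+1}$, and the rest is bookkeeping on top of the already proved Proposition \ref{rel:prop} and Corollary \ref{rel:cor1}.
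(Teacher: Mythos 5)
Your proof is correct and follows essentially the same route as the paper: the base case $A_{1,1}=C_1$ via explicit computation of $P_{0,1}$ and $P_{1,0}$, and an inductive step obtained by applying Corollary \ref{rel:cor1} twice so that the product of the two transition matrices collapses to $n\,C_{n+1}$. The only (immaterial) difference is that you pass through the intermediate multi-index $(n+1,n)$ while the paper passes through $(n,n+1)$.
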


\begin{proof}
We find that $P_{0,1}(z)=z+1-\alpha$ and $P_{1,0}(z)=z+1$,
thus $A_{1,1}=C_1$.  In general, for an integer $n\ge 1$,
the formulas of the preceding corollary give
\[
 A_{n+1,n+1}
  = \begin{pmatrix}
     n &n+1\\
     n &n+1+\alpha
    \end{pmatrix}
    \begin{pmatrix}
     n-\alpha &n\\
     n &n
    \end{pmatrix}
    A_{n,n}
  = n C_{n+1} A_{n,n}
\]
and the conclusion follows by induction on $n$.
\end{proof}

\vfill
 \vfill

\small
\vbox{
\hbox{Damien \sc Roy}\par
\hbox{D\'epartement de math\'ematiques et de statistique}\par
\hbox{Universit\'e d'Ottawa}\par
\hbox{150 Louis Pasteur}\par
\hbox{Ottawa, Ontario}\par
\hbox{Canada K1N 6N5}
}

\end{document}